\title[Nielsen realization and manifold models]{On  Nielsen realization  and manifold models for classifying spaces}
              \author{James F. Davis}
              \address{Department of Mathematics\\
              Indiana University\\
              Rawles Hall\\
              831 East 3rd St\\
              Bloomington, IN 47405\\
              U.S.A.}
                 \email{jfdavis@indiana.edu}
              \urladdr{https://jfdmath.sitehost.iu.edu}
              \author{Wolfgang L\"uck}
        \address{Mathematical Institute of the University of  Bonn\\ 
                Endenicher Allee 60\\
                53115 Bonn, Germany}
         \email{wolfgang.lueck@him.uni-bonn.de}
         \urladdr{http://www.him.uni-bonn.de/lueck}
         \date{January 2024}
    \keywords{classifying spaces for proper actions, existence and uniqueness of manifold models}
\subjclass[2020]{57N99,55R35,18F25}
\UseAllTwocells \DeclareMathAlphabet{\matheurm}{U}{eur}{m}{n}
\newcommand{\Groupoids}{\matheurm{Groupoids}}
\newcommand{\Spectra}{\matheurm{Spectra}}
\newcommand{\tp}{$\,{}^{\prime}$}
\DeclareMathOperator{\aut}{aut}
\DeclareMathOperator{\colim}{colim} 
\DeclareMathOperator{\cone}{cone}
\DeclareMathOperator{\id}{id} \DeclareMathOperator{\im}{im}
\DeclareMathOperator{\ind}{ind} \DeclareMathOperator{\Idem}{Idem}
\DeclareMathOperator{\Isom}{Isom}
 \DeclareMathOperator{\Out}{Out}
 \DeclareMathOperator{\per}{per}
\DeclareMathOperator{\pr}{pr}
 \DeclareMathOperator{\UNil}{UNil}
\DeclareMathOperator{\Wh}{Wh}
  \newcommand{\IQ}{\mathbb{Q}}
\newcommand{\IR}{\mathbb{R}}  
  \newcommand{\IZ}{\mathbb{Z}}
\newcommand{\Z}{\mathbb{Z}}
\newcommand{\cala}{\mathcal{A}} \newcommand{\calb}{\mathcal{B}}
 \newcommand{\calf}{\mathcal{F}}
 \newcommand{\calg}{\mathcal{G}}
\newcommand{\calh}{\mathcal{H}} 
 \newcommand{\calm}{\mathcal{M}}
\newcommand{\calp}{\mathcal{P}} 
 \newcommand{\cals}{\mathcal{S}}
\newcommand{\calv}{\mathcal{V}} \newcommand{\calvI}{\mathcal{V}_{\operatorname{I}}}
\newcommand{\calvII}{\mathcal{V}_{\operatorname{II}}}
 \newcommand{\bfE}{\mathbf{E}}
  \newcommand{\bfK}{\mathbf{K}}
\newcommand{\bfL}{\mathbf{L}}
 \newcommand{\NK}{N\!K}
\newcommand{\pt}{\{\bullet\}}
\newcommand{\EGF}[2]{E_{#2}(#1)} 
\newcommand{\bub}[1]{\underline{B}#1} \newcommand{\eub}[1]{\underline{E}#1}
\newcommand{\edub}[1]{\underline{\underline{E}}#1}
   \newcommand{\calfin}{{\mathcal F}{\mathcal I}{\mathcal N}}
   \newcommand{\calvcyc}{{\mathcal V}{\mathcal C}{\mathcal Y}}
   \newcommand{\wh}[1]{{\widehat{#1}}} \newcommand{\wt}[1]{{\widetilde{#1}}}
   \newcounter{commentcounter}
   \theoremstyle{plain} \newtheorem{theorem}{Theorem}[section]
    \newtheorem{lemma}[theorem]{Lemma}
   \newtheorem{corollary}[theorem]{Corollary}
   \newtheorem{proposition}[theorem]{Proposition}
   \newtheorem*{theorem*}{Theorem} \newtheorem*{theoremA*}{Theorem A}
   \newtheorem*{theoremB*}{Theorem B} \newtheorem*{NRQ}{Neilsen Realization Question}
    \newtheorem*{MMQ}{Manifold Model
     Question}
   \theoremstyle{definition} \newtheorem{definition}[theorem]{Definition}
    \newtheorem{remark}[theorem]{Remark}
   \newtheorem{notation}[theorem]{Notation} 
    \newtheorem*{definition*}{Definition}
   \theoremstyle{remark}
\let\c@equation=\c@theorem\makeatother
    \newcommand{\CAT}{\operatorname{CAT}}
   \newcommand{\CATzero}{\operatorname{CAT(0)}}
   \newcommand{\version}[1] 
   {\begin{center} last edited on #1\\
       last compiled on \today\\
       name of tex-file: \jobname
     \end{center}}
\begin{document}

     \begin{abstract}
       We consider the problem of whether, for a given virtually torsionfree discrete
       group $\Gamma$, there exists a cocompact proper topological $\Gamma$-manifold,
       which is equivariantly homotopy equivalent to the classifying space for proper
       actions.  This problem is related to Nielsen Realization.  We will make the
       assumption that the expected manifold model has a zero-dimensional singular
       set. Then we solve the problem in the case, for instance, that $\Gamma$ contains a
       normal torsionfree subgroup $\pi$ such that $\pi$ is hyperbolic and $\pi$ is the
       fundamental group of an aspherical closed manifold of dimension greater or equal to
       five and $\Gamma/\pi$ is a finite cyclic group of odd order.
     \end{abstract}

     \maketitle

     \newlength{\origlabelwidth}\setlength\origlabelwidth\labelwidth


     \typeout{------------------- Introduction -----------------}

     \section{Introduction}%
     \label{sec:introduction}

     If a group $G$ acts effectively on a manifold $X$ with fundamental group $\pi$, then
     there is a short exact sequence
     \begin{equation} \label{group_extension_intro} 1 \to \pi \xrightarrow{i} \Gamma
       \xrightarrow{p} G \to 1
     \end{equation}
     and a group action of $\Gamma$ on the universal cover $\widetilde X$ so that the
     action of $\Gamma/\pi$ on $\widetilde X/\pi$ recovers the $G$-action on $X$.  (Here
     $\Gamma$ is the subgroup of the homeomorphism group of $\widetilde X$ given by lifts
     of the elements of $G$.)

     This paper makes progress on the two following interrelated questions.  We will
     discuss these questions and then state our results.

     \begin{NRQ} If $X$ is a closed aspherical manifold with fundamental group $\pi$, can
       any group monomorphism $\phi : G \to \Out(\pi)$ from a finite group to the outer
       automorphism group of $\pi$ be realized by a $G$-action on $X$?
     \end{NRQ}

\begin{MMQ}
  Given a closed aspherical manifold $X$ with fundamental group $\pi$ and dimension $d$,
  and a short exact sequence
  \[
    1 \to \pi \to \Gamma \to G \to 1
  \]
  with $G$ finite, does there exist a $d$-dimensional manifold model for $\eub{\Gamma}$,
  the classifying space for proper $\Gamma$-actions?
\end{MMQ}

The Nielsen Realization Question was raised by Nielsen for 2-manifolds, and was answered
affirmatively by Kerckhoff~\cite{Kerckhoff(1983)}.  The answer to the Nielsen Realization
Question is also yes for closed Riemannian manifolds with constant negative sectional
curvature (see Subsection~\ref{subsec:hyperbolic_NRQ_and_MMQ}).

In considering the Nielsen Realization Question, the first step is to see if, given
$\phi$, there is an extension~\eqref{group_extension_intro} realizing $\phi$.  There is a
cohomological obstruction in $H^3(G;Z(\pi))$ to the existence of the
extension~\cite[Theorem IV.8.7]{MacLane(1963)} and, if an extension realizing $\phi$
exists, $H^2(G;Z(\pi))$ classifies the extensions~\cite[Theorem IV.8.8]{MacLane(1963)}.
Here $Z(\pi)$ is the center of the group $\pi$.  Raymond and
Scott~\cite{Raymond-Scott(1977)} gave a negative answer to the Nielsen Realization
Question, by giving examples of $(X,\phi)$ where the group extension does not exist.
Block and Weinberger~\cite{Block-Weinberger(2008)} gave negative answers to the Nielsen
Realization Question where the center of $\pi$ is trivial.  However, Nielsen's original
question concerned surfaces of genus $>1$, so it is worth noting that there are no
counterexamples known when $X$ is negatively curved, or more generally when $\pi$ is a
hyperbolic group (in the sense of Gromov).  An affirmative answer was given
in~\cite[Remark 1.21]{Lueck(2022_Poincare_models)} when $\dim X \geq 5, \pi= \pi_1X$ is a
hyperbolic group, and the extension $\Gamma$ of $\pi$ by $G$ realizing $\phi$ is
torsionfree.  This generalized the analogous result of Farrell and
Jones~\cite{Farrell-Jones(1998)} in the case where $X$ is a Riemannian manifold of
negative curvature with $\dim X \geq 5$.

The answer to the Nielsen Realization Question is yes for closed Riemannian manifolds with
constant negative sectional curvature (see Subsection~\ref{subsec:hyperbolic_NRQ_and_MMQ}).

Recall that for a discrete group $\Gamma$, \emph{a model for $\eub{\Gamma}$} is a
$\Gamma$-space $M$ which is a $\Gamma$-CW-complex so that for every finite subgroup $H$,
the fixed point set $M^H$ is contractible and for every infinite subgroup $H$, the fixed
point set $M^H$ is empty.  $\eub{\Gamma}$ is the classifying space for proper actions in
the sense that if $Y$ is a proper $\Gamma$-CW-complex (i.e.~$\Gamma$-CW-complex with
finite isotropy), there is a $\Gamma$-map $Y \to \eub{\Gamma}$, unique up to
$\Gamma$-homotopy.  For a survey on $\eub{\Gamma}$ we refer to~\cite{Lueck(2005s)}.

A \emph{manifold model for $\eub{\Gamma}$} is simply a model $M$ for $\eub{\Gamma}$, so
that $M$, ignoring the group action, is a topological manifold.  (One could also include
the hypothesis that $M^H$ is a submanifold for non-trivial finite subgroups $H$, but we
are interested in the case where the singular set is discrete, so this distinction is not
relevant for us).  A model $M$ for $\eub{\Gamma}$ is \emph{cocompact} if $M/\Gamma$ is
compact.  In the statement of the Manifold Model Question we could have replaced the words
``$d$-dimensional'' with ``cocompact'' and had an equivalent question
(see~\ref{subsec:Cocompact_and_d-dimensional_models}).

Counterexamples to the Manifold Model Question have been given by Davis and Leary~\cite{Davis-Leary(2003asph)}
and by Block and Weinberger~\cite[Theorem~1.5]{Block-Weinberger(2008)}.  However, to the best of our knowledge, there are no
counterexamples known in three cases of interest: (1) if the normalizer of each nontrivial
finite subgroups of $\Gamma$ is finite, or (2) if $\eub \Gamma$ has a model which is a
finite $\Gamma$-CW-complex, or (3) if $\pi$ is a hyperbolic group.

The answer to the Manifold Model Question is yes for closed Riemannian manifolds with
constant negative sectional curvature (see Subsection~\ref{subsec:hyperbolic_NRQ_and_MMQ}).

The Borel Conjecture for a closed aspherical manifold $X$ states that any homotopy
equivalence $N \to X$ where $N$ is a closed manifold is homotopic to a homeomorphism.  It
has been proven for many manifolds $X$, but is open in general
(see~\cite{Weinberger(2023)} and~\cite{Lueck(2022book)} for a discussion).

An affirmative answer to the Manifold Model Question implies an affirmative answer to the
Nielsen Realization Question in the following sense.  Suppose $X$ is a closed aspherical
manifold with fundamental group $\pi$ and $\phi: G \to \Out(\pi)$ is a group monomorphism
with $G$ finite.  Suppose, in addition, that the Borel Conjecture holds for $X$ and that
$\phi$ is realized by a group extension
\[
  1 \to \pi \to \Gamma \to G \to 1.
\]
If there is a cocompact manifold model $M$ for $\eub \Gamma$, then $M/\pi$ is a closed
manifold with a $G$-action realizing $\phi$, and the Borel Conjecture gives a
homeomorphism between $X$ and $M/\pi$ inducing the identity on the fundamental group.
 
Under very special circumstances an affirmative answer to the Nielsen Realization Question
implies an affirmative answer to the Manifold Model Question, see
Section~\ref{subsec:NRQ_and_MMQ}.

For the rest of the introduction we focus on the Manifold Model Question.  We note that
the Manifold Model Question is an existence question. The corresponding uniqueness
question is: are two $d$-dimensional manifold models for $\eub{\Gamma}$ equivariantly
homeomorphic?

The simplest case of the existence and uniqueness questions is when $\Gamma$ is
torsionfree, equivalently when $\Gamma$ acts freely on $M$.  The uniqueness question is
the famous Borel Conjecture. The existence question was solved when $\dim X \geq 5$ and
$X$ is negatively curved in~\cite{Farrell-Jones(1998)} and extended to the case where
$\dim X \geq 5$ and $\pi$ is hyperbolic in~\cite{Lueck(2022_Poincare_models)}.

The next level in complexity (compared to free actions) is the pseudo-free case.  A
$\Gamma$-space $M$ is \emph{pseudo-free} if the singular set
$M^{> 1} = \{x \in M \mid \Gamma_x \not= 1\}$ is discrete, or, equivalently the
$\Gamma$-space $M^{>1}$ is the disjoint union of its $\Gamma$-orbits. If $M$ is
pseudo-free model for $\eub{\Gamma}$ and $H$ is a non-trivial finite subgroup, then $M^H$
is a point, fixed by its normalizer $N_\Gamma H$, hence the normalizer is finite.
Conversely, Proposition 2.3 of~\cite{Connolly-Davis-Khan(2015)} asserts that if $\Gamma$
is a virtually torsionfree group where the normalizers of non-trivial finite subgroups are
finite, and if $\Gamma$ acts properly and cocompactly on a contractible manifold, then the
action is pseudo-free.  In summary, a cocompact manifold model for $\eub{\Gamma}$ is
pseudo-free if and only if the normalizer of each non-trivial finite subgroup is finite.
Thus a geometric condition is equivalent to an algebraic condition.

This is our basic assumption in this paper.  The uniqueness question in this case was
studied extensively in~\cite{Connolly-Davis-Khan(2014H1)}
and~\cite{Connolly-Davis-Khan(2015)}.  We improve some of the techniques from these papers
and extend their uniqueness results.


A question related to the manifold model question was posed by Brown~\cite[page
32]{Brown(1979)}.  It asks that, given extension~\eqref{group_extension_intro} with $G$
finite, if $\eub{\pi}$ has a $d$-dimensional CW-model, then does $\eub{\Gamma}$ have a
$d$-dimensional CW-model? The paper~\cite{Lueck(2022_Poincare_models)} studied this
question in the pseudo-free case and the results of that paper are a key input for our
paper.

A recent book that discussed topics connected to the themes of this paper
is~\cite{Weinberger(2023)}.


\subsection{A special case}%
\label{sec:A_special_case}

As an illustration we state a special case of our main theorem. Recall that $E\Gamma$ is a
free $\Gamma$-$CW$-complex, which is contractible after forgetting the $\Gamma$-action,
or, equivalently, $E\Gamma \to B\Gamma := E\Gamma/\Gamma$ is the universal principal
$\Gamma$-bundle. Recall that $\eub{\Gamma}$ is a $\Gamma$-$CW$-complex such that
$\eub{\Gamma}^H$ is contractible for every finite subgroup $H \subseteq \Gamma$ and all
its isotropy groups are finite, or, equivalently $\eub{\Gamma}$ is the classifying space
for proper $\Gamma$-actions.  Two models for $E\Gamma$ or for $\eub{\Gamma}$ are
$\Gamma$-homotopy equivalent.

\begin{notation}\label{not_delta-spaces_intro}
  Let $\calm$ be a complete system of representatives of the conjugacy classes of maximal
  finite subgroups of $\Gamma$.  Put
  \begin{eqnarray*}
    \partial E\Gamma & := & \coprod_{F \in \calm} \Gamma \times_F EF;
    \\
    \partial \eub{\Gamma} & := & \coprod_{F \in \calm} \Gamma /F;
    \\
    \partial B\Gamma & := & \coprod_{F \in \calm} BF;
    \\
    \bub{\Gamma} & := &\eub{\Gamma/\Gamma}.
  \end{eqnarray*}            
\end{notation}

Recall that a virtually cyclic group is finite, surjects onto the infinite cyclic group
with finite kernel (type I), or surjects onto the infinite dihedral group with finite
kernel (type II).

We may impose some of the following conditions on a group $\Gamma$.
  
\begin{definition}[Conditions on $\Gamma$]\label{def:conditions_on_Gamma_intro}\
  \begin{itemize}

  \item[(M)] Every non-trivial finite subgroup of $\Gamma$ is contained in a unique
    maximal finite subgroup;

  \item[(NM)] If $F$ is a non-trivial maximal finite subgroup, then its normalizer
    satisfies $N_{\Gamma}F = F$;

  \item[(OH)] The composite
     
    \[
      H_d^{\Gamma}(E\Gamma,\partial E\Gamma) \xrightarrow{\partial}
      H_{d-1}^{\Gamma}(\partial E\Gamma) \xrightarrow{\cong} \bigoplus_{F \in \calm}
      H^F_{d-1}(EF) \to H^F_{d-1}(EF)
    \]
    of the boundary map, the inverse of the obvious isomorphism and the projection to the
    summand of $F \in \calm$ is surjective for all $F \in \calm$;

  \item[(F)] If $H \subseteq \Gamma$ is finite and non-trivial, then $N_{\Gamma}H$ is
    finite;

  \item[(V)] Every infinite virtually cyclic subgroup lies in a unique maximal infinite
    virtually cyclic subgroup;

  \item[(NV)] Every maximal infinite virtually cyclic subgroup $V$ satisfies
    $N_{\Gamma}V = V$;

  \item[(V$_{\operatorname{II}}$)] Every virtually cyclic subgroup of type {II} lies in a
    unique maximal virtually cyclic subgroup of type {II};

  \item[(NV$_{\operatorname{II}}$)] Every maximal virtually cyclic subgroup $V$ of type
    {II} satisfies lies $N_{\Gamma}V = V$.
  \end{itemize}
\end{definition}

\begin{theorem}[Oriented manifold models]\label{the:special_case_of_ultiimate_theorem_intro}
  Suppose there is a short exact sequence of groups
  \[
    1 \to \pi \xrightarrow{i} \Gamma \xrightarrow{p} G \to 1
  \]
  with $G$ finite.
 
  Suppose that the following conditions are satisfied:

  \begin{itemize}
    
  \item There exists a closed $d$-dimensional oriented manifold, which is homotopy
    equivalent to $B\pi$;

  \item The natural number $d$ satisfies $d \ge 5$;
    
  \item The group $\pi$ is hyperbolic;
        
  \item Every non-trivial finite subgroup of $\Gamma$ is odd order cyclic;

  \item The group $\Gamma$ satisfies conditions (M), (NM), and (OH), see
    Definition~\ref{def:conditions_on_Gamma_intro}.

  \end{itemize}

  Then:

  \begin{enumerate}

  \item\label{the:special_case_of_ultiimate_theorem_intro:existence} There exists a proper
    cocompact oriented $d$-dimensional topological manifold $M$, which is a model for
    $\eub{\Gamma}$;

  \item\label{the:special_case_of_ultiimate_theorem_intro:pseuso-free} Any
    $\Gamma$-manifold appearing in
    assertion~\eqref{the:special_case_of_ultiimate_theorem_intro:existence} is
    pseudo-free;

  \item\label{the:special_case_of_ultiimate_theorem_intro:uniqueness} Any two
    $\Gamma$-manifolds appearing in
    assertion~\eqref{the:special_case_of_ultiimate_theorem_intro:existence} are
    $\Gamma$-homeomorphic.

  \end{enumerate}
\end{theorem}

If we require that our manifold model for $\eub{\Gamma}$ is pseudo-free, then the
conditions (M), (NM), and (OH) are automatically satisfied as explained
in~\cite[Lemma~1.9]{Lueck(2022_Poincare_models)}.  Hence these conditions have to appear
in Theorem~\ref{the:special_case_of_ultiimate_theorem_intro}.


\subsection{Slice manifold systems and slice manifold models}%
\label{subsec:Slice_manifold_systems_and_slice_manifold_models_intro}

\begin{definition}
  Let $F$ be a nontrivial finite group.  A \emph{Swan complex of type $(F,d-1)$} is a
  $(d-1)$-dimensional free $F$-$CW$-complex $S_F$ such that $S_F$, after forgetting the
  $F$-action, is homotopy equivalent to the sphere $S^{d-1}$.  The Swan complex is
  \emph{oriented}, if we have chosen a generator $[S_F]$ for the infinite cyclic group
  $H_{d-1}(S_F)$.
\end{definition}

Let $\calm$ be a complete system of representatives of the conjugacy classes of maxi\-mal
finite subgroups of $\Gamma$.  The following definition is taken
from~\cite[Definition~3.1]{Lueck(2022_Poincare_models)}.

\begin{definition}\label{def:free_d-dimensional_slice_system}
  A \emph{$d$-dimensional free slice system} $\cals = \{S_F \mid F\in \calm\}$, or just
  \emph{slice system}, consists of a Swan complex $S_F$ of type $(F,d-1)$ for every
  $F \in \calm$.  We call $\cals$ \emph{oriented}, if each Swan complex is oriented.
\end{definition}

We need the following manifold version of it.

 \begin{definition}\label{def:manifold_slice_system}
   A \emph{$d$-dimensional free slice mani\-fold system} or just \emph{slice mani\-fold
     system} $\cals = \{S_F \mid F\in \calm\}$ is a $d$-dimensional free slice system
   $\cals = \{S_F \mid F\in \calm\}$ so that each $S_F$, after forgetting the $F$-action,
   is homeomorphic to $S^{d-1}$.
 \end{definition}

\begin{remark}\label{rem:Swan_complexes}
  Swan complexes were introduced in~\cite{Swan(1960b)}.  For a Swan complex of type
  $(F,d-1)$, the Lefschetz Fixed Point Theorem implies that if $d-1$ is even, that $F$ is
  cyclic of order 2 and acts reversing orientation, and that if $d-1$ is odd, then $F$
  acts preserving orientation.  Any two Swan complexes of type $(C_2,2k)$ are
  equivariantly homotopy equivalent.
 
  Now assume $d-1$ is odd.  There is a Swan complex of type $(F,d-1)$ if and only if $F$
  has periodic cohomology with period $d$ (see~\cite[Proposition 4.1]{Swan(1960b)}), which
  is equivalent to $H_{d-1}(BF)$ being cyclic of order $|F|$ (see~\cite[Proposition
  11.1]{Cartan-Eilenberg(1956)}).  There exists a Swan complex $(F, d-1)$ for some $d$ if
  and only if the sylow $2$-subgroup is cyclic or generalized quaternionic and for $p$ odd
  the sylow $p$-subgroups are cyclic (see~\cite[Theorem 11.6]{Cartan-Eilenberg(1956)}).
 
  Let $S_F$ be an oriented Swan complex of type $(F,d-1)$.  Let
  $[S_F/F] \in H_{d-1}(S_F/F)$ be chosen so that the covering map $S_F \to S_F/F$ sends
  $[S_F]$ to $|F|\cdot [S_F/F]$.  Let $c_F \colon S_F \to EF$ and
  $\bar c_F \colon S_F/F \to BF$ be classifying maps.  Define the \emph{$k$-invariant}
  \[
   \kappa(S_F/F) = H_{d-1}(\bar c_F) [S_F/F] \in H_{d-1}(BF).
  \]
  It is a generator of this cyclic group.

  Two oriented Swan complexes $S_F$ and $S'_F$ of type $(F,d-1)$ are \emph{oriented  homotopy equivalent}
  if there is an orientation preserving equivariant homotopy
  equivalence.  This occurs if and only if their $k$-invariants are
  equal (see~\cite[Proposition 2.21]{Davis-Milgram(1985)}). Furthermore, any additive generator of
  $H_{d-1}(BF)$ is realized as the $k$-invariant of an oriented Swan complex (see~\cite[Lemma 2.22]{Davis-Milgram(1985)}).
\end{remark}

\begin{remark}\label{rem:slice-systems_and_Gamma_intro}
  Thus for $d$ odd, a $d$-dimensional slice system exists if and only if all $F \in \calm$
  have order 2, and all slice systems are homotopy equivalent.  For $d$ even, the
  existence of a $d$-dimensional slice system is equivalent  to every
  $F\in \calm$ having periodic cohomology of period $d+1$, and the homotopy type of a
  slice system is determined by the $k$-invariants.  Thus if $G := \Gamma/\pi$ has
  periodic cohomology of period $d+1$, then a $d$-dimensional slice system exists.  The
  existence of $d$-dimensional manifold slice system is equivalent to $S^{d-1}$ admitting
  a free $F$-action for every element $F \in \calm$.  This occurs if $G$ acts freely on
  $S^{d-1}$, for example if $G$ is cyclic and $d$ is even.
\end{remark}

Let $\cals = \{S_F \mid F\in \calm\}$ be a slice manifold system.  We denote by $D_F$ the
cone over $S_F$.  So we get a compact $d$-dimensional topological manifold $D_F$ with
boundary $\partial D_F = S_F$ together with a topological $F$-action such that the
$F$-action is free outside one point $0_F$ in the interior of $D_F$, whose isotropy group
is $F$, the pair $(D_F,S_F)$ is a finite $F$-$CW$-pair, and $(D_F,S_F)$ is homeomorphic to
$(D^d,S^{d-1})$.

In dimension $d \ge 6$ the desired $F$-$CW$-complex structure on $S_F$ comes for free in
Definition~\ref{def:manifold_slice_system}.  Namely, the closed topological manifold
$S_F/F$ has a handlebody structure and hence a $CW$-structure, if
$\dim(S_F/F) =d-1 \ge 5$, see~\cite[Section~9.2]{Freedman-Quinn(1990)}
and~\cite[III.2]{Kirby-Siebenmann(1977)}, and therefore $S_F$ is a free
$F$-$CW$-complex. Note that it is an open question, whether every closed $4$-manifold
carries a $CW$-structure.  There are examples of closed $4$-manifolds, which admit no
triangulation, see~\cite{Manolescu(2014ICM)}.

    \begin{notation}\label{not_C(Z)_intro}
      Given a space $Z$, with path components $\pi_0(Z)$, let $C(Z)$ be its \emph{path
        componentwise cone}, i.e, $C(Z) := \coprod_{C \in \pi_0(Z) } \cone(C)$.
    \end{notation}

    One may describe $C(Z)$ also by the pushout
    \[
      \xymatrix{Z \ar[r]^p \ar[d]_{i_0} & \pi_0(Z) \ar[d]
        \\
        Z \times [0,1] \ar[r] & C(Z) }
    \]
    where $i_0 \colon Z \to Z \times[0,1]$ sends $z$ to $(z,0)$, $p \colon Z \to \pi_0(Z)$
    is the projection, and $\pi_0(Z)$ is equipped with the discrete topology.  If $Z$ is a
    $\Gamma$-$CW$-complex, then $C(Z)$ inherits a $\Gamma$-$CW$-structure. If
    $\cals = \{S_F \mid F \in \calm\}$ is a free $d$-dimensional slice manifold system, we
    get an identification of $\Gamma$-manifolds
    \[
      C(\coprod_{F \in \calm} \Gamma \times_F S_F) = \coprod_{F \in \calm} \Gamma \times_F
      D_F.
    \]
  
\begin{definition}[Slice manifold model]\label{def:slice-manifold_model}
  We call a proper $\Gamma$-manifold $M$ without boundary a \emph{slice mani\-fold model
    for $\underline{E}\Gamma$}, or just \emph{slice mani\-fold model}, with respect to the
  slice manifold system $\cals = \{S_F \mid F \in \calm\}$, if there exists a proper
  cocompact free $d$-dimensional $\Gamma$-manifold $N$ with boundary
  $\partial N = \coprod_{F \in \calm} \Gamma \times_F S_F$ and a $\Gamma$-pushout
  \[\xymatrix{\partial N = \coprod_{F \in \calm} \Gamma \times_F S_F \ar[r]\ar[d] & N
      \ar[d]
      \\
      C(\partial N) = \coprod_{F \in \calm} \Gamma \times_F D_F \ar[r] & M, }
  \]
  where the left vertical arrow is the obvious inclusion, such that $M$ is
  $\Gamma$-homotopy equivalent to $\eub{\Gamma}$.

  We call the pair $(N,\partial N)$ a \emph{slice manifold complement}.
\end{definition}

Note that for a slice manifold model $M$ we have specified an open $\Gamma$-neighborhood
of the singular set $M^{>1}$. Such an open neighborhood exists automatically in the smooth
category. We will discuss this assumption in the topological category in
Section~\ref{sec:Comparisons}.

We will frequently use that a slice manifold model comes with a $\Gamma$-pushout
\begin{equation}
  \xymatrix{\partial N \ar[r] \ar[d]
    &
    N
    \ar[d]
    \\
    \partial \eub{\Gamma} \ar[r]
    &
    \eub{\Gamma},
  }
  \label{canonical_homotopy_Gamma_pushout_for_a_slice_manifold_model_intro}
\end{equation}
where the left vertical arrow is the disjoint union over $F \in \calm$ of the canonical
projections $\Gamma \times_F S_F \to \Gamma/F$.


\subsection{Main theorems}%
\label{subsec:Main_theorems}

We introduce some notation and then formulate our main theorems.  Let
\[
  1 \to \pi \xrightarrow{i} \Gamma \to G \to 1
\]
be a group extension with $G$ finite as in~\eqref{group_extension_intro}.  Assume that
$B\pi$ is a Poincar\'e complex of dimension $d >0$ (e.g.~$B\pi$ is a closed
$d$-manifold.).  Then by Poincar\'e duality, $H^d_{\pi}(E\pi; \Z\pi)$ is infinite cyclic
as an abelian group, hence is isomorphic to $\Z^v$ as a $\Z\pi$-module for a unique
homomorphism
\[
  v : \pi \to \{\pm 1\}.
\]
Here $\Z^v$ is the $\Z\pi$-module which is infinite cyclic as an abelian group, but where
$\gamma x = v(\gamma)x$ for $\gamma \in \pi$ and $x \in \Z^v$.  It follows that
$H_d^\pi(E\pi; \Z^v)$ is infinite cyclic as an abelian group.  Choose a generator $[B\pi]$
(a ``fundamental class").

Shapiro Lemma~\cite[Proposition~III.6.2]{Brown(1982)} gives an isomorphism of $\Z\pi$-modules
$H_\Gamma^d(E \Gamma; \Z \Gamma) \cong H_\pi^d(E\pi; \Z\pi)$.  Hence the $\Z\Gamma$-module
$H^d_{\Gamma}(E\Gamma; \Z\Gamma) $ is infinite cyclic as an abelian group and thus
determines a homomorphism
\begin{equation} \label{w_colon_Gamma_to_(pm_1)} w : \Gamma \to \{\pm 1\}
\end{equation}
which restricts to $v$. It has already been defined in~\cite[Notation~6.7]{Lueck(2022_Poincare_models)}.

Assume $\Gamma$ satisfies conditions (M) and (NM), in other words, assume that every
non-trivial finite subgroup is contained in a unique maximal finite subgroup $F$ and that
$N_\Gamma F = F$. Assume also that every such $F$ has cohomology of period $d+1$
(equivalently, there is a $d$-dimensional slice system) Then each of the two maps below
are rational isomorphisms
\[
  H_d^{\pi}(E\pi; \Z^v) \to H_d^{\Gamma}(E\Gamma; \Z^w) \to H_d^{\Gamma}(E\Gamma, \partial
  E\Gamma; \Z^w)
\]
and each of the three groups are infinite cyclic (see~\cite[Lemma 6.21 and diagram
(6.4)]{Lueck(2022_Poincare_models)}). The specified generator $[B\pi]$ for the first group
specifies generators $[B\Gamma]$ and $[B\Gamma,\partial B\Gamma]$ for the second and third
groups by requiring that they are a positive multiple of the image of $[B\pi]$.

\begin{definition}[Condition (H)]\label{definition:(H)}
  The composite
  \[
    H_d^{\Gamma}(E\Gamma,\partial E\Gamma;\Z^w) \xrightarrow{\partial}
    H_{d-1}^{\Gamma}(\partial E\Gamma;\Z^w) \xrightarrow{\cong} \bigoplus_{F \in \calm}
    H^F_{d-1}(EF;\Z^{w|F}) \to H^F_{d-1}(EF;\Z^{w|F})
  \]
  of the boundary map, the inverse of the obvious isomorphism and the projection to the
  summand of $F \in \calm$ is surjective for all $F \in \calm$.
\end{definition}

We now review the condition (S) on an oriented slice system $\cals$.  This condition was
introduced in Section 7 of~\cite{Lueck(2022_Poincare_models)}, where further details and
explanations are given.

\begin{definition}[Condition (S)]\label{definition:(S)} Let
  $\cals = \{S_F,[S_F] \mid F\in \calm\}$ be an oriented $d$-dimensional free slice system
  with $d$ even and suppose $w : \Gamma \to \{\pm 1\}$ satisfies condition (H).  Let
  $\kappa_F \in H_{d-1}(BF)$ be the image of $[B\Gamma,\partial B\Gamma]$ under the
  composite
  \[
    H_d^{\Gamma}(E\Gamma,\partial E\Gamma;\IZ^w) \xrightarrow{\partial}
    H^{\Gamma}_{d-1}(\partial E\Gamma;\IZ^w ) \xrightarrow{\cong} \bigoplus_{F \in \calm}
    H_{d-1}(BF) \to H_{d-1}(BF).
  \]
  \emph{Condition (S) on the oriented slice system $\cals$} says that
  \[
    \kappa[S_F/F] = \kappa_F \in H_{d-1}(BF).
  \]
\end{definition}

Note that if assumption (H) holds and $d$ is even, there is always an oriented slice
system satisfying condition (S), unique up to oriented homotopy equivalence (see
Remark~\ref{rem:slice-systems_and_Gamma_intro}).

So regardless what slice manifold model $M$ we get out of
Theorem~\ref{the:existence_intro}, its underlying slice manifold system $\cals'$ has the
property that the $F$-homotopy type of $S_F'$ is uniquely determined by the group $\Gamma$
itself and is independent of $M$.

Let $\UNil_d(\IZ;\IZ^{\pm 1},\IZ^{\pm 1})$ the UNil-groups defined by Cappell, see
Remark~\ref{rem:Identification_with_UNil-groups}.

\begin{theorem}[Existence]\label{the:existence_intro}
  Suppose there is a short exact sequence of groups
  \[
    1 \to \pi \xrightarrow{i} \Gamma \xrightarrow{p} G \to 1
  \]
  with $G$ finite.  Suppose that the following conditions are satisfied:

  \begin{enumerate}

  \item\label{the:existence_intro_Bpi} There is a closed manifold of dimension $d$, which
    is homotopy equivalent to $B\pi$. Fix a generator $[B\pi]$ of the infinite cyclic
    group $H_d^{\pi}(E\pi;\IZ^{v})$;
       
  \item\label{the:existence_intro:d} The natural number $d$ satisfies $d \ge 5$;

  \item\label{the:orientation} For every $F \in \calm$ the restriction of the homomorphism
    $w : \Gamma \to \{\pm1\}$ to $F$ is trivial, if $d$ is even, and is non-trivial, if
    $d$ is odd;

  \item\label{the:existence_intro:(M),(NM),(H)} The group $\Gamma$ satisfies conditions
    (M), (NM), and (H), see Definitions~\ref{def:conditions_on_Gamma_intro}
    and~\ref{definition:(H)};

  \item\label{the:existence_intro:eub(gamma)} One of the following assertions holds:

    \begin{enumerate}

    \item\label{the:existence_intro:eub(gamma):model} There exists a finite
      $\Gamma$-$CW$-model for $\eub{\Gamma}$, the group $\pi$ satisfies the Full
      Farrell-Jones Conjecture, see
      Subsection~\ref{subsec:The_Full_Farrell_Jones_Conjecture}, and $\Gamma$ satisfies
      condition (V$_{\operatorname{II}})$, see
      Definition~\ref{def:conditions_on_Gamma_intro};

    \item\label{the:existence_intro:eub(gamma):hyperbolic} The group $\pi$ is hyperbolic;

    \item\label{the:existence_intro:eub(gamma):CAT(0)} The group $\Gamma$ acts
      cocompactly, properly, and isometrically on a proper $\CATzero$-space;
    \end{enumerate}

  \item\label{the:existence_intro:cals} There exists an oriented free $d$-dimensional
    slice system $\cals$ in the sense of
    Definition~\ref{def:free_d-dimensional_slice_system}, which satisfies condition (S).
    Fix such a choice;

  \item\label{the:existence_intro:UNIL} One of the following conditions is satisfied:
    \begin{enumerate}
    \item\label{the:existence_intro:UNIL:UNIL_vanishes} The groups
      $\UNil_d(\IZ;\IZ^{(-1)^d},\IZ^{(-1)^d})$ and
      $\UNil_{d+1}(\IZ;\IZ^{(-1)^d},\IZ^{(-1)^d})$ vanish;
      
    \item\label{the:existence_intro:UNIL:w_trivial_d_divislble_by_4} We have
      $d \equiv 0 \mod (4)$;
    \item\label{the:existence_intro:UNIL:no_D_infty} The group $\Gamma$ contains no
      subgroup isomorphic to $D_{\infty}$;
    \item\label{the:existence_intro:UNIL:F_odd_order} Every element $F \in \calm$ has odd
      order;
    \item\label{the:existence_intro:UNIL:G_odd_order} The order of $G$ is odd.
    \end{enumerate}
    
  \end{enumerate}

  Then there exists a $d$-dimensional free slice mani\-fold system
  $\cals' = \{S_F' \mid F\in \calm\}$ in the sense of
  Definition~\ref{def:manifold_slice_system} and a slice mani\-fold model for
  $\eub{\Gamma}$ with respect to the slice system $\cals'$ in the sense of
  Definition~\ref{def:slice-manifold_model}.  Moreover, for any such pair $(M,\cals')$,
  the $F$-$CW$-complexes $S_F$ and $S_F'$ are $F$-homotopy equivalent.
\end{theorem}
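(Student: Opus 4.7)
The overarching strategy is to upgrade the finite $\Gamma$-$CW$-Poincaré model for $\eub{\Gamma}$ provided by~\cite{Lueck(2022_Poincare_models)} to a topological manifold model via equivariant surgery theory, with the closed manifold model for $B\pi$ furnished by hypothesis~\eqref{the:existence_intro_Bpi} as the starting point and with the Farrell-Jones-type hypothesis~\eqref{the:existence_intro:eub(gamma)} supplying the $L$-theoretic input needed to kill surgery obstructions. My first step is to invoke the Poincaré model theorem of~\cite{Lueck(2022_Poincare_models)} with the oriented slice system $\cals$ from hypothesis~\eqref{the:existence_intro:cals}. This yields a free cocompact $\Gamma$-Poincaré pair $(N^{\mathrm{P}},\partial N^{\mathrm{P}})$ with $\partial N^{\mathrm{P}} = \coprod_{F \in \calm} \Gamma \times_F S_F$ such that coning off the boundary components produces a $\Gamma$-Poincaré model $P$ for $\eub{\Gamma}$, fitting into a pushout as in~\eqref{canonical_homotopy_Gamma_pushout_for_a_slice_manifold_model_intro}.

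Next I realize $\cals$ by a manifold slice system $\cals' = \{S_F' \mid F \in \calm\}$: for each $F \in \calm$ I need a free $F$-action on $S^{d-1}$ whose classifying map sends the fundamental class to $\kappa_F \in H_{d-1}(BF)$. Existence follows from the hypothesis that $B\pi$ is realized by a closed $d$-manifold together with the orientation hypothesis~\eqref{the:orientation}, via the Swan/Madsen--Thomas--Wall realization theory and periodic resolutions over $\IZ F$; once one such action is produced, the equality of classifying classes forces an $F$-homotopy equivalence $S_F \simeq S_F'$, which proves the final clause of the theorem. Then I replace $\partial N^{\mathrm{P}}$ by $\coprod_{F \in \calm} \Gamma \times_F S_F'$ through this $F$-homotopy equivalence, so that after the replacement the Poincaré boundary is already a topological $\Gamma$-manifold.

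The heart of the argument is the surgery step, carried out on the quotient pair $(\overline{N^{\mathrm{P}}},\partial \overline{N^{\mathrm{P}}}) := (N^{\mathrm{P}}/\Gamma, \partial N^{\mathrm{P}}/\Gamma)$, a compact Poincaré pair whose boundary components are the closed manifolds $S_F'/F$. Two ingredients are required. First, the Spivak normal fibration of $\overline{N^{\mathrm{P}}}$ must admit a $TOP$-reduction; this reduction is transferred from the tangent bundle of the closed manifold $X \simeq B\pi$ along the natural map $X \to \bub{\Gamma}$, after excising neighborhoods of the cone points so that the bundle data glues with the canonical $TOP$-structure on each $D_F'/F$. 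Second, the resulting surgery obstruction in the appropriate $L$-group of $\IZ\Gamma$ (with orientation $w$) must vanish. Here hypothesis~\eqref{the:existence_intro:eub(gamma)} enters: via the Full Farrell-Jones Conjecture (directly in case~(a), and via the known verifications for hyperbolic groups in~(b) and for cocompact $\CATzero$-actions in~(c)), the assembly map is an isomorphism, and the relevant Wall $L$-groups split as a sum of contributions from finite and virtually cyclic subgroups that can be analyzed termwise; the virtually cyclic contributions carrying potential obstructions are exactly the UNil summands, and hypothesis~\eqref{the:existence_intro:UNIL} is designed precisely to kill those (either directly, or by ruling out $D_\infty$ subgroups in $\Gamma$, or by parity of the orders of $F \in \calm$ or of $G$ via Cappell's theorem).

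Once the quotient surgery yields a topological manifold pair $(\overline{N'},\partial \overline{N'})$ with $\partial \overline{N'} = \coprod_{F \in \calm} S_F'/F$ homotopy equivalent to $(\overline{N^{\mathrm{P}}}, \partial \overline{N^{\mathrm{P}}})$, I lift to the free $\Gamma$-cover $N' \to \overline{N'}$ and define
\[
M \;:=\; N' \cup_{\partial N'} \coprod_{F \in \calm} \Gamma \times_F D_F',
\]
where $D_F'$ is the cone on $S_F'$, which is homeomorphic to $D^d$ with the induced $F$-action. This $M$ is a proper cocompact $d$-dimensional topological $\Gamma$-manifold by construction, and is $\Gamma$-homotopy equivalent to $P \simeq \eub{\Gamma}$ by comparing the pushout defining $M$ with the Poincaré pushout defining $P$. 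Therefore $M$ is a slice manifold model for $\eub{\Gamma}$ with respect to $\cals'$, and the remaining clause on $F$-homotopy equivalence of $S_F$ with $S_F'$ is already in hand. The main obstacle throughout is the surgery step: controlling the $TOP$-reduction in the presence of the cone-point singularities and verifying vanishing of the algebraic surgery obstruction despite the potential UNil contributions is what forces the combined list of hypotheses~\eqref{the:existence_intro:eub(gamma)} and~\eqref{the:existence_intro:UNIL}.
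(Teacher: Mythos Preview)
Your broad outline---Poincar\'e model, then surgery, then glue in cones---is correct, but there is a genuine gap in your second step, and your surgery step misses the actual mechanism the paper uses.

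The gap is your attempt to realize $\cals$ by a manifold slice system $\cals'$ \emph{before} doing surgery. You claim this follows from ``Swan/Madsen--Thomas--Wall realization theory,'' but the space form problem for an individual finite group $F$ with periodic cohomology has genuine obstructions (the Swan finiteness obstruction in $\widetilde{K}_0(\IZ F)$ and the Wall surgery obstruction), and nothing in the hypotheses of the theorem forces these to vanish for each $F$ separately. The hypothesis that $B\pi$ has a closed manifold model is a global statement about $\pi$, not about the finite subgroups $F$; since $F \cap \pi = \{1\}$, there is no direct transfer. The paper does \emph{not} build $\cals'$ in advance. Instead it applies Ranicki's total surgery obstruction to the Poincar\'e pair $(X/\Gamma,\partial X/\Gamma)$ with the original non-manifold boundary $\partial X = \coprod_{F} \Gamma \times_F S_F$, and the manifold slice system $\cals'$ emerges only \emph{afterwards}: once surgery produces a manifold pair $(N,\partial N) \simeq (X,\partial X)$, each boundary component of $\partial N$ is a homotopy $(d-1)$-sphere with free $F$-action, hence an actual sphere by the Poincar\'e Conjecture.

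Your surgery step is also not how the argument goes. You aim to show that a surgery obstruction in $L_*(\IZ\Gamma)$ vanishes by splitting the $L$-group via Farrell--Jones; this is too coarse. The paper instead computes the \emph{non-periodic} structure group $\cals_d^{\langle j \rangle}(X/\Gamma,\partial X/\Gamma)$: using the vanishing of the periodic structure group (which is where the UNil hypothesis enters, via Theorem~\ref{the:computing_the_periodic_structure_group}) and a spectral sequence comparison of $\bfL$, $\bfL\langle 1\rangle$, and $\overline{\bfL}$, one shows that both $\cals_d^{\langle j \rangle}(X/\Gamma,\partial X/\Gamma)$ and $\cals_d^{\langle j \rangle}(X/\pi,\partial X/\pi)$ are infinite cyclic and that the transfer $i^*$ between them is \emph{injective}. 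Since $(X/\pi,\partial X/\pi)$ is homotopy equivalent to the closed manifold model for $B\pi$ with open disks removed, its total surgery obstruction vanishes; injectivity of the transfer then forces the total surgery obstruction of $(X/\Gamma,\partial X/\Gamma)$ to vanish as well (Theorem~\ref{the:surgery_classification}). This transfer argument is the crux, and it is what lets the global manifold hypothesis on $B\pi$ do the work that the local space form problem for each $F$ cannot.
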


Theorem~\ref{the:existence_intro} is a direct consequence of
Remark~\ref{rem:implications_of_subconditions},
Lemma~\ref{lem:consequences_of_the_conditions_(M),(NM)_(F)},
Remark~\ref{rem:Identification_with_UNil-groups},
Theorem~\ref{the:computing_the_periodic_structure_group}~\ref{the:computing_the_periodic_structure_group:vanishing},
and Theorem~\ref{the:existence_of_manifold_model}.

\begin{remark}[The role of $w$]\label{rem:role_of_w}
  Consider the situation of Theorem~\ref{the:existence_intro}.  If $M$ is any slice
  mani\-fold model for $\eub{\Gamma}$ and $N$ is any slice manifold complement in the
  sense of Definition~\ref{def:slice-manifold_model}, then $N$ is simply connected and the
  homomorphism $w$ of~\eqref{w_colon_Gamma_to_(pm_1)} is automatically the first
  Stiefel-Whitney class of $N/\Gamma$ under the obvious identification
  $\Gamma = \pi_1(N/\Gamma)$. Moreover, the restriction of $w$ to $\pi$ is the first
  Stiefel-Whitney class of any closed manifold model for $B\pi$.
\end{remark}

\begin{remark}[Orientation preserving]~\label{rem:Orientation_preserving} One can improve
  Theorem~\ref{the:existence_intro} by taking the orientations of the slice systems into
  account.  For simplicity we consider only the case, where $d$ is even.  Namely, the
  choice of a generator $[B\pi] \in H_d^{\pi}(E\pi;\IZ^w)$ yields a choice of a generator
  $[N,\partial N]$ of the infinite cyclic group $H_d^{\pi}(N,\partial N;\IZ^w)$,
  see~\cite[Notation~6.22]{Lueck(2022_Poincare_models)}.  Its image under the obvious
  composite
  \[
    H_d^{\pi}(N,\partial N;\IZ^w) \to H_{d-1}^{\pi}(\partial N;\IZ^w) \xrightarrow{\cong}
    \bigoplus_{F \in \calm} H_{d-1}(S'_F/F)
  \]
  yields orientations on $S_F'$ for every $F \in \calm$.  Then one can show that $S_F$ and
  $S_F'$ are oriented $F$-homotopy equivalent for every $F \in \calm$.
\end{remark}

\begin{remark}[Some redundance]\label{rem:implications_of_subconditions}
  Consider condition~\eqref{the:existence_intro:eub(gamma)} in
  Theorem~\ref{the:existence_intro}.
  Conditions~\eqref{the:existence_intro:eub(gamma):hyperbolic}
  or~\eqref{the:existence_intro:eub(gamma):CAT(0)} imply
  condition~\eqref{the:existence_intro:eub(gamma):model}, provided that conditions (M) and
  (NM) are satisfied and there is a finite $CW$-model for $B\pi$,
  see~\cite[Theorem~B]{Bartels-Lueck(2012annals)},~\cite[Theorem~1.12]{Lueck(2022_Poincare_models)}
  and Lemma~\ref{lem:consequences_of_the_conditions_(M),(NM)_(F)}.  Hence it suffices to
  treat conditions~\eqref{the:existence_intro:eub(gamma):model} when dealing with
  condition~\eqref{the:existence_intro:eub(gamma)}.

  Similarly, consider condition~\eqref{the:existence_intro:UNIL} appearing in
  Theorem~\ref{the:existence_intro}.  Obviously the
  implications~\eqref{the:existence_intro:UNIL:G_odd_order}~$\implies$~\eqref{the:existence_intro:UNIL:F_odd_order}~%
  $\implies$~\eqref{the:existence_intro:UNIL:no_D_infty} hold. The
  implication~\eqref{the:existence_intro:UNIL:w_trivial_d_divislble_by_4}~%
  $\implies$~\eqref{the:existence_intro:UNIL:UNIL_vanishes} has been proved
  in~\cite{Connolly-Davis(2004),Banagl-Ranicki(2006),Connolly-Ranicki(2005),Connolly-Kozniewski(1995)}.
  Hence it suffices to treat condition~\eqref{the:existence_intro:UNIL:UNIL_vanishes}
  and~\eqref{the:existence_intro:UNIL:no_D_infty}, when dealing with
  condition~\eqref{the:existence_intro:UNIL}.
\end{remark}

\begin{remark}[Some conditions are necessary]\label{rem:some_conditions_are_necessary}
  If we want to find a slice manifold model for $\eub{\Gamma}$ in the sense of
  Definition~\ref{def:slice-manifold_model}, then
  conditions~\eqref{the:orientation},~\eqref{the:existence_intro:(M),(NM),(H)},~\eqref{the:existence_intro_Bpi},
  and~\eqref{the:existence_intro:cals} appearing in Theorem~\ref{the:existence_intro} are
  necessary and there must be a finite $\Gamma$-$CW$-model for $\eub{\Gamma}$,
  see~\cite[Lemma~1.9, Lemma~3.3, and Lemma~7.10]{Lueck(2022_Poincare_models)}.

  Condition~\eqref{the:existence_intro:d} stems from the well-known problem that the
  Whitney trick and hence surgery theory works without further assumptions only in high
  dimensions.

  The Farrell-Jones Conjecture, condition (V$_{\operatorname{II}})$, and
  condition~\eqref{the:existence_intro:UNIL:UNIL_vanishes} will enter in the proof that
  certain periodic structure sets are trivial.
\end{remark}

\begin{theorem}[Uniqueness]\label{the:uniqueness_intro}
  Suppose there is a short exact sequence of groups
  \[
    1 \to \pi \xrightarrow{i} \Gamma \xrightarrow{p} G \to 1
  \]
  with $G$ finite.  Let $d$ be a natural number. Consider two $d$-dimen\-sio\-nal free
  slice mani\-fold systems $\cals = \{S_F \mid F\in \calm\}$ and
  $\cals' = \{S_F' \mid F\in \calm\}$.  Let $M$ and $M'$ be two slice manifold models for
  $\eub{\Gamma}$ with respect to $\cals$ and $\cals'$.  Suppose that the following
  conditions are satisfied:

  \begin{itemize}

  \item The natural number $d$ satisfies $d \ge 5$,
 
  \item\label{the:uniqueness_intro:eub(gamma):FJC} One of the following assertions holds:

    \begin{itemize}

    \item\label{the:uniqueness_intro:eub(gamma):FJC:FJV_and_(V_II)} The group $\pi$ is a
      Farrell-Jones group, see Subsection~\ref{subsec:The_Full_Farrell_Jones_Conjecture},
      and $\Gamma$ satisfies condition (V$_{\operatorname{II}})$, see
      Definition~\ref{def:conditions_on_Gamma_intro};

    \item\label{the:uniqueness_intro:eub(gamma):FJC:hyperbolic} The group $\pi$ is
      hyperbolic;

    \item\label{the:uniqueness_intro:eub(gamma):FJC:CAT(0)} The group $\Gamma$ acts
      cocompactly, properly, and isometrically on a proper $\CATzero$-space;
    \end{itemize}

  \item\label{the:uniqueness_intro:UNIL} One of the following conditions is satisfied:
    \begin{itemize}
    \item\label{the:uniqueness_intro:UNIL:UNIL_vanishes} The group
      $\UNil_{d+1}(\IZ;\IZ^{(-1)^d},\IZ^{(-1)^d})$ vanishes;
    \item\label{the:uniqueness_intro:UNIL:w_trivial_d_divislble_by_4} We have
      $d \equiv 0 \mod (4)$ or $d \equiv 1 \mod (4)$;
    \item\label{the:uniqueness_intro:UNIL:no_D_infty} The group $\Gamma$ contains no
      subgroup isomorphic to $D_{\infty}$;
    \item\label{the:uniqueness_intro:UNIL:F_odd_order} Every element $F \in \calm$ has odd
      order;
    \item\label{the:uniqueness_intro:UNIL:G_odd_order} The order of $G$ is odd.
    \end{itemize}
  \end{itemize}

  Then:

  \begin{enumerate}
  \item\label{the:uniqueness_intro:Gamma-homeo_M_and_M'} There exists a
    $\Gamma$-homeomorphism $M \xrightarrow{\cong} M'$;

  \item\label{the:uniqueness_intro:h-cobordant} For every $F$ there exists an
    $F$-$h$-cobordism between $S_F$ and $S_F'$;

  \item\label{the:uniqueness_intro:slice_complements} Suppose additionally that for every
    $F \in \calm$ the $2$-Sylow subgroup of $F$ is cyclic.  Let $(N,\partial N)$ and
    $(N',\partial N')$ be slice manifold complements of $M$ and $M'$. Suppose that $S_F$
    and $S_F'$ are simple $F$-homotopy equivalent for every $F \in \calm$.

    Then there exists a $\Gamma$-homeomorphism
    $(N,\partial N) \xrightarrow{\cong} (N',\partial N')$. In particular $S_F$ and $S_F'$
    are $F$-homeomorphic for every $F \in \calm$.
  \end{enumerate}

\end{theorem}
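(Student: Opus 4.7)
The plan is to reduce the uniqueness statement to vanishing of an equivariant periodic structure set, using the same machinery that drove the existence theorem, and then to analyze the residual $h$-cobordism ambiguity on the slice systems.

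First, since $M$ and $M'$ are both $\Gamma$-homotopy equivalent to $\eub{\Gamma}$, there is a $\Gamma$-homotopy equivalence $f\colon M \to M'$. Comparing fixed-point sets shows that $f$ permutes the singular sets compatibly with the $\Gamma$-action, and after an equivariant homotopy we may assume that $f$ carries the canonical cone neighborhood $\coprod_{F\in\calm}\Gamma\times_F D_F$ of $M^{>1}$ to the corresponding cone neighborhood in $M'$. Restricting yields a $\Gamma$-homotopy equivalence of pairs $(N,\partial N) \to (N',\partial N')$ of free slice manifold complements, which determines a class in the equivariant periodic structure set associated with $M$. The Farrell-Jones hypothesis on $\pi$ (or the hyperbolic/$\CATzero$ alternatives, which reduce to the Farrell-Jones case by Remark~\ref{rem:implications_of_subconditions}) controls the assembly map for $\Gamma$, and when combined with the UNil-vanishing hypothesis (again reducing, via Remark~\ref{rem:implications_of_subconditions}, to the case of direct vanishing of $\UNil_{d+1}(\IZ;\IZ^{(-1)^d},\IZ^{(-1)^d})$ or the absence of a $D_\infty$-subgroup) yields vanishing of the periodic structure set via Theorem~\ref{the:computing_the_periodic_structure_group}~\ref{the:computing_the_periodic_structure_group:vanishing}. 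The standard conversion from periodic to topological surgery, valid in the cocompact setting for $d \ge 5$, then upgrades $f$ to a $\Gamma$-homeomorphism $M \xrightarrow{\cong} M'$, proving~\eqref{the:uniqueness_intro:Gamma-homeo_M_and_M'}.

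For~\eqref{the:uniqueness_intro:h-cobordant}, the $\Gamma$-homeomorphism just obtained carries each specified cone-neighborhood of a fixed point in $M$ onto some (a priori different) cone-neighborhood of the corresponding fixed point in $M'$. The region in $M'$ between this new cone and the originally specified cone $\Gamma\times_F D_F'$ is a compact free $F$-manifold whose boundary components are $F$-homeomorphic to $S_F$ and $S_F'$ respectively, and which deformation retracts onto either component; hence it is an $F$-$h$-cobordism. For~\eqref{the:uniqueness_intro:slice_complements}, the obstruction to this $h$-cobordism being a product is its Whitehead torsion in $\Wh(F)$. Under the additional simple $F$-homotopy equivalence hypothesis this torsion is pinned down modulo the duality involution on $\Wh(F)$, and the cyclic $2$-Sylow assumption forces the relevant Tate cohomology of $\Wh(F)$ to vanish, so the torsion can be arranged to be zero. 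The equivariant $s$-cobordism theorem (applicable since $d-1 \ge 4$ and $S_F/F$ admits a $CW$-structure by Kirby-Siebenmann/Freedman-Quinn handle theory) then provides an $F$-product structure on the $h$-cobordism, and these assemble into the desired $\Gamma$-homeomorphism $(N,\partial N) \xrightarrow{\cong} (N',\partial N')$.

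The main obstacle is the second step, vanishing of the equivariant periodic structure set: this is the deep input, relying on the full force of the Farrell-Jones Conjecture combined with the Cappell UNil analysis distilled in Theorem~\ref{the:computing_the_periodic_structure_group}~\ref{the:computing_the_periodic_structure_group:vanishing}. Once that vanishing is in hand, the remaining arguments are largely formal consequences of fixed-point bookkeeping and the $h$/$s$-cobordism theorem.
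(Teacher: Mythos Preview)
Your outline captures the right ingredients (reducing to the free pair $(N,\partial N)\to(N',\partial N')$ and invoking vanishing of the structure group via Theorem~\ref{the:computing_the_periodic_structure_group}), but there is a genuine gap in the passage to assertion~\eqref{the:uniqueness_intro:Gamma-homeo_M_and_M'}. Vanishing of $\cals^{\langle j\rangle}_{d+1}(N'/\Gamma,\partial N'/\Gamma)$ does \emph{not} directly upgrade the $\Gamma$-homotopy equivalence of pairs to a $\Gamma$-homeomorphism: with the $s$-decoration you would need the map of pairs to be \emph{simple}, which you have not established (and in general $\Wh(\Gamma)\cong\bigoplus_{F\in\calm}\Wh(F)$ is nonzero); with the $h$-decoration you obtain only a $\Gamma$-$h$-cobordism $(W,\partial W)$ between $(N,\partial N)$ and $(N',\partial N')$, not a homeomorphism. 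The paper's proof (Theorem~\ref{the:uniqueness_of_manifold_models}) uses exactly this $h$-cobordism and then performs a nontrivial extra step you have omitted: one chooses an inverse $h$-cobordism $W^-$, observes that $W\cup_{\partial N'}W^-$ and $W^-\cup_{\partial N}W$ are trivial, and runs an Eilenberg swindle along $\partial N\times[0,\infty)$ to produce a $\Gamma$-homeomorphism $W\cup_{\partial N'}C(\partial N')\xrightarrow{\cong} C(\partial N)$ relative $\partial N$. Only after this swindle does one obtain $M\cong M'$. Note also that in the paper's argument, assertion~\eqref{the:uniqueness_intro:h-cobordant} is a \emph{byproduct} of the $h$-cobordism $(W,\partial W)$ obtained en route to~\eqref{the:uniqueness_intro:Gamma-homeo_M_and_M'}, rather than being deduced from~\eqref{the:uniqueness_intro:Gamma-homeo_M_and_M'} as you propose.

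Your argument for~\eqref{the:uniqueness_intro:slice_complements} is also different from the paper's and contains an unjustified claim. You assert that the cyclic $2$-Sylow hypothesis forces the relevant Tate cohomology of $\Wh(F)$ to vanish; this is not true in general (already for $F$ cyclic of odd prime order, $\Wh(F)$ can have nontrivial Tate cohomology with respect to the standard involution). The paper instead uses the cyclic $2$-Sylow hypothesis via~\cite[Lemma~3.3~(5)]{Lueck(2022_Poincare_models)} to conclude that \emph{every} $F$-homotopy equivalence $S_F\to S'_F$ is simple (given that one is), then invokes~\cite[Theorems~9.1 and~10.3]{Lueck(2022_Poincare_models)} to produce a simple $\Gamma$-homotopy equivalence of pairs $(N,\partial N)\to(N',\partial N')$, and finally applies the vanishing of $\cals^{s}_{d+1}(N'/\Gamma,\partial N'/\Gamma)$ together with Theorem~\ref{the:surgery_classification}~\eqref{the:surgery_classification:uniqueness} to obtain the $\Gamma$-homeomorphism of pairs directly.
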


Theorem~\ref{the:uniqueness_intro} is now a direct consequence of
Remark~\ref{rem:implications_of_subconditions},
Remark~\ref{rem:Identification_with_UNil-groups}, and
Theorem~\ref{the:uniqueness_of_manifold_models}.  This theorem is similar to the main
results of~\cite{Connolly-Davis-Khan(2014H1)},~\cite{Khan(2013)},
and~\cite{Connolly-Davis-Khan(2015)}; some discussion of the variant statements is given
in~\ref{subsec:Assumptions}.


\subsection{Acknowledgments}\label{subsec:Acknowledgements}

The first author was supported by NSF grant DMS 1615056, Simons Foundation Collaboration
Grant 713226, and the Max Planck Institute for Mathematics.  He thanks Dylan Thurston for
a useful conversation.  The second author is funded by the Deutsche Forschungsgemeinschaft
(DFG, German Research Foundation) under Germany's Excellence Strategy \--- GZ 2047/1,
Projekt-ID 390685813, Hausdorff Center for Mathematics at Bonn.

We also thank the referee for an extraordinary careful reading of this paper, and for the
helpful expository suggestions. We also thank Dominik Kirstein, Christian Kremer, and Markus Land for helpful discussions.
  

The paper is organized as follows:

\tableofcontents


\typeout{------ Section: Relating some conditions on $\Gamma$ -----------------------}

\section{Relating some conditions on $\Gamma$}%
\label{sec:Relating_some_conditions_on_Gamma}

\begin{lemma}\label{lem:consequences_of_the_conditions_(M),(NM)_(F)}
  Let $\Gamma$ be a group.
  \begin{enumerate}

  \item\label{lem:consequences_of_the_conditions_(M),(NM)_(F):(M)_and_(NM)_imply_(F)}
    Suppose that $\Gamma$ satisfies (M). Then $\Gamma$ satisfies (F), if and only if it
    satisfies (NM);

  \item\label{lem:consequences_of_the_conditions_(M),(NM)_(F):type_I} Suppose that
    $\Gamma$ satisfies condition (F). Then every virtually cyclic subgroup of type I is
    infinite cyclic;

  \item\label{lem:consequences_of_the_conditions_(M),(NM)_(F):type_II} Suppose that
    $\Gamma$ satisfies condition (F).  Let $V$ be a virtually cyclic subgroup of type
    {II}.  Then $V$ and $N_{\Gamma}V$ are both isomorphic to the infinite dihedral group
    $D_{\infty}$;
  
  \item\label{lem:consequences_of_pseudo-free} Suppose $\Gamma$ acts properly,
    cocompactly, and effectively on a contractible manifold $M$.  The following are
    equivalent:
    \begin{enumerate}
    \item The $\Gamma$ action on $M$ is pseudo-free;
    \item $\Gamma$ satisfies $(F)$;
    \item $\Gamma$ satisfies $(M)$ and $(NM)$;
    \end{enumerate}

  \item\label{lem:consequences_of_the_conditions_(M),(NM)_(F):hyperbolic} Suppose that
    $\Gamma$ is hyperbolic or, more generally, that any infinite subgroup, which is not
    virtually cyclic, contains a copy of $\IZ\ast\IZ$ as subgroup.  Then $\Gamma$
    satisfies conditions (V) and (NV);

  \item\label{lem:consequences_of_the_conditions_(M),(NM)_(F):(F)_and_(V_II)_imply_(NV_II)}
    Suppose that $\Gamma$ satisfies condition (F) and (V$_{\operatorname{II}}$).  Then
    $\Gamma$ satisfies condition (NV$_{\operatorname{II}}$);

  \item\label{lem:consequences_of_the_conditions_(M),(NM)_(F):(F)_and_manifold} Suppose
    that $\Gamma$ acts cocompactly, properly, and isometrically on a proper
    $\CATzero$-space $X$ and that $\Gamma$ satisfies (F).

    Then $\Gamma$-satisfies conditions (V$_{\operatorname{II}}$) and
    (NV$_{\operatorname{II}}$).
    
  \end{enumerate}
\end{lemma}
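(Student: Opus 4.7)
The plan is to prove the seven parts in sequence, using the classification of virtually cyclic groups (type~I: $K \rtimes \IZ$ with $K$ the unique maximal finite normal subgroup; type~II: the quotient by the maximal finite normal subgroup is $D_\infty$) together with elementary manipulations of normalizers, and geometric input from proper $\CATzero$-actions for the last assertion.

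For part~\eqref{lem:consequences_of_the_conditions_(M),(NM)_(F):(M)_and_(NM)_imply_(F)} I would argue (M)+(NM)$\,\Rightarrow\,$(F) as follows: given a non-trivial finite $H \subseteq \Gamma$, condition (M) provides a unique maximal finite $F \supseteq H$; any $\gamma \in N_\Gamma H$ sends $F$ to another maximal finite subgroup containing $H$, which by uniqueness equals $F$, so $N_\Gamma H \subseteq N_\Gamma F = F$ by (NM), hence is finite. Conversely, under (M)+(F), a maximal finite $F$ satisfies $F \subseteq N_\Gamma F$, and $N_\Gamma F$ is finite by (F), so $N_\Gamma F = F$ by maximality. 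Parts~\eqref{lem:consequences_of_the_conditions_(M),(NM)_(F):type_I} and the first clause of~\eqref{lem:consequences_of_the_conditions_(M),(NM)_(F):type_II} then reduce to applying (F) to the maximal finite normal subgroup $K$ of a virtually cyclic $V$: if $K$ were non-trivial, $V \subseteq N_\Gamma K$ would force $N_\Gamma K$ to be infinite, contradicting (F), so $K = 1$ and $V$ is isomorphic to $\IZ$ or $D_\infty$ according to type.

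To obtain $N_\Gamma V \cong D_\infty$ in part~\eqref{lem:consequences_of_the_conditions_(M),(NM)_(F):type_II}, write $V = \langle r, s \mid s^2 = 1,\ srs = r^{-1}\rangle$ and consider the conjugation map $N_\Gamma V \to \Aut(V) \cong D_\infty$ (using $\Out(D_\infty) = 1$). Its image contains $\Inn(V) = D_\infty$, hence is surjective. Any non-trivial $g \in C_\Gamma(V)$ lies in $C_\Gamma(s) \subseteq N_\Gamma\langle s\rangle$; since $g \notin V$ (because $Z(V) = 1$) any torsion $g$ would already sit in a non-trivial finite subgroup whose normalizer contains $r$ of infinite order, contradicting (F), while an infinite-order $g$ already makes $N_\Gamma\langle s\rangle$ infinite, again contradicting (F). Hence $C_\Gamma(V) = 1$ and $N_\Gamma V \cong D_\infty$. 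Part~\eqref{lem:consequences_of_pseudo-free} is then a combination: (b)$\,\Leftrightarrow\,$(c) is part~\eqref{lem:consequences_of_the_conditions_(M),(NM)_(F):(M)_and_(NM)_imply_(F)}, while (a)$\,\Leftrightarrow\,$(b) is~\cite[Proposition~2.3]{Connolly-Davis-Khan(2015)}.

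For part~\eqref{lem:consequences_of_the_conditions_(M),(NM)_(F):hyperbolic}, given infinite virtually cyclic $V \supseteq \langle c \rangle$ with $c$ of infinite order, the centralizer $C_\Gamma(c)$ cannot contain $\IZ\ast\IZ$ since two elements commuting with a common infinite-order element cannot generate a non-abelian free product; so $C_\Gamma(c)$ and hence $N_\Gamma\langle c\rangle$ are virtually cyclic under the hypothesis. Choosing $\langle c \rangle$ as the unique characteristic maximal $\IZ$ in $V$ makes $N_\Gamma\langle c \rangle$ the unique maximal infinite virtually cyclic subgroup containing $V$, yielding (V) and (NV). Part~\eqref{lem:consequences_of_the_conditions_(M),(NM)_(F):(F)_and_(V_II)_imply_(NV_II)} then follows from part~\eqref{lem:consequences_of_the_conditions_(M),(NM)_(F):type_II}: a maximal type~II $V$ is $D_\infty$, and $N_\Gamma V \cong D_\infty$ is itself a type~II virtually cyclic overgroup of $V$; uniqueness in (V$_{\operatorname{II}}$) forces $V = N_\Gamma V$. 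Finally, for part~\eqref{lem:consequences_of_the_conditions_(M),(NM)_(F):(F)_and_manifold}, in a proper cocompact isometric $\CATzero$-action every infinite-order $\gamma$ is a hyperbolic isometry with non-empty closed convex min-set $\operatorname{Min}(\gamma) \cong Y \times \IR$ (Bridson--Haefliger), and the setwise stabilizer of $\operatorname{Min}(\gamma)$ is virtually cyclic and contains every virtually cyclic overgroup of $\gamma$; using (F) to strip torsion from this stabilizer provides the unique maximal type~II overgroup of any type~II $V$, establishing (V$_{\operatorname{II}}$), after which (NV$_{\operatorname{II}}$) follows from part~\eqref{lem:consequences_of_the_conditions_(M),(NM)_(F):(F)_and_(V_II)_imply_(NV_II)}.

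The main obstacles I expect are part~\eqref{lem:consequences_of_the_conditions_(M),(NM)_(F):type_II}, where pinning down $C_\Gamma(V) = 1$ under only (F) requires tracking commutation relations against normalizers of small auxiliary finite subgroups, and part~\eqref{lem:consequences_of_the_conditions_(M),(NM)_(F):(F)_and_manifold}, where the geometric step of extracting a canonical $\IR$-factor from $\operatorname{Min}(\gamma)$ and matching its stabilizer against the algebraic type~II virtually cyclic overgroups is where most of the care is needed.
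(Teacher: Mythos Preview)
Your overall strategy matches the paper's for parts~(1)--(3) and~(6), but there are genuine gaps in three places. In part~(4), the implication (b)$\Rightarrow$(c) is \emph{not} a consequence of part~(1): part~(1) only gives (F)$\Leftrightarrow$(NM) \emph{assuming} (M), so you must first extract (M) from the geometric hypothesis. The paper does this by invoking the further conclusion of \cite[Proposition~2.3]{Connolly-Davis-Khan(2015)} that under (a) the fixed set of every non-trivial finite subgroup is a single point; the isotropy group of that point is then the unique maximal finite overgroup. In part~(5), your stated reason ``two elements commuting with a common infinite-order element cannot generate a non-abelian free product'' is false (witness $F_2 \times \IZ$); the correct argument is that $a,b \in C_\Gamma(c)$ generating $\IZ \ast \IZ$ forces $\langle a, c\rangle$ or $\langle b,c\rangle$ to contain a copy of $\IZ^2$, which is infinite, not virtually cyclic, yet abelian, contradicting the hypothesis that every such subgroup contains $\IZ \ast \IZ$. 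The paper bypasses this and simply cites \cite{Lueck-Weiermann(2012)}.

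In part~(7) the object $\operatorname{Min}(\gamma) \cong Y \times \IR$ is the wrong target: its setwise stabilizer need not be virtually cyclic when $Y$ is positive-dimensional, and nothing in (F) alone forces $Y$ to be a point. The paper instead produces a single $V$-invariant geodesic $c \colon \IR \to X$. Writing $V = \langle t,s \mid t^2 = 1,\ tst = s^{-1}\rangle$, one takes an axis for the hyperbolic isometry $l_s$, observes that $l_t$ preserves $\operatorname{Min}(l_s)$ (since $t$ conjugates $s$ to $s^{-1}$), uses the fixed-point property of the finite-order isometry $l_t$ on the $\CATzero$ factor $Y$ to pick an axis through a $t$-fixed slice, and checks that $t$ then reflects this axis. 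The stabilizer $\Gamma_c$ of $\im(c)$ acts properly and cocompactly on $\IR$ and is non-abelian (it contains $V$), hence $\Gamma_c \cong D_{\infty}$; one then shows that any type~II overgroup $V' \supseteq V$ also preserves $\im(c)$ (an axis for $s^m$ is an axis for $s$), making $\Gamma_c$ the unique maximal type~II overgroup. Your treatment of part~(3) also contains a slip---$\Out(D_{\infty}) \cong \IZ/2$, not trivial---but since $\Aut(D_{\infty}) \cong D_{\infty}$ nonetheless and your argument for $C_\Gamma(V) = 1$ is valid, the conclusion survives; the paper takes a slightly different route there, showing only that $C_\Gamma V \subseteq N_\Gamma C_1 \cap N_\Gamma C_2$ is finite (for the two $\IZ/2$ free factors $C_1,C_2$), which already makes $N_\Gamma V$ virtually cyclic of type~II and hence $\cong D_{\infty}$ by the first clause.
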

\begin{proof}~\eqref{lem:consequences_of_the_conditions_(M),(NM)_(F):(M)_and_(NM)_imply_(F)}
  Suppose that (M) and (NM) hold. Choose a maximal finite subgroup $M$ of $\Gamma$
  satisfying $H \subseteq M$.  Consider $\gamma \in N_{\Gamma}H$. We get
  $\{1\} \not= H = H \cap \gamma H \gamma^{-1}\subseteq M \cap \gamma M \gamma^{-1}$.  We
  conclude $M = \gamma M \gamma^{-1}$ from condition (M). This implies
  $\gamma \in N_{\Gamma} M$.  Therefore $N_{\Gamma}H$ is contained in the finite subgroup
  $M$ by condition (NM). Hence (F) holds.

  Suppose that (M) and (F) hold. Consider a non-trivial maximal finite subgroup
  $F \subseteq \Gamma$.  Since $N_{\Gamma} F$ is finite by assumption and $F$ is maximal,
  we get $N_{\Gamma} F= F$. Hence (NM) holds.
  \\[1mm]~\eqref{lem:consequences_of_the_conditions_(M),(NM)_(F):type_I} We can find a
  normal finite subgroup $K$ of $H$ such that $V/K$ is infinite cyclic. Since
  $V \subseteq N_{\Gamma} K$ holds and $V$ is infinite, $K$ must be trivial by condition
  (F).  Hence $V$ is infinite cyclic.
  \\[1mm]~\eqref{lem:consequences_of_the_conditions_(M),(NM)_(F):type_II} Choose an
  epimorphism $p \colon V \to D_{\infty}$ with finite kernel $K$.  Since
  $V \subseteq N_{\Gamma} K$ holds and $V$ is infinite, $K$ must be trivial by condition
  (F). Hence $V$ is isomorphic to $D_{\infty}$.

  Let $V$ be a virtually cyclic subgroup of type {II}. We have already proved that $V$
  contains two cyclic subgroups $C_1$ and $C_2$ of order two such that $C_1 \ast C_2 = V$.
  We have the short exact sequence $1 \to C_{\Gamma}V \to N_{\Gamma} V \to \aut(V)$ where
  the first term is the centralizer of $V$.  Obviously
  $C_{\Gamma}V = C_{\Gamma} C_1 \cap C_{\Gamma}C_2 \subseteq N_{\Gamma}C_1 \cap
  N_{\Gamma}C_2$.  The condition (F) implies that $N_{\Gamma}C_1 \cap N_{\Gamma}C_2$ is
  finite. Since $\aut(V)$ is the semi-direct product $D_{\infty} \rtimes \IZ/2$, the group
  $N_{\Gamma}V$ is virtually cyclic.  Since it does not have a central element of infinite
  order, it is a virtually cyclic subgroup of type {II}. We have already shown that any
  such group is isomorphic to $D_{\infty}$.
  \\[1mm]~\eqref{lem:consequences_of_pseudo-free} Proposition 2.3
  of~\cite{Connolly-Davis-Khan(2015)} shows that (a) and (b) are equivalent, and,
  furthermore if either of these are satisfied, then the fixed-point set of a finite
  non-trivial subgroup is a point.  It follows that if (a) and (b) are satisfied, then for
  non-trivial finite subgroups $F \subseteq H$, $M^H = M^F$ and both are contained in the
  isotropy group of this point, thus conditions (M) and (NM) are satisfied.  But
  conditions (M) and (NM) imply condition (F) by
  assertion~\eqref{lem:consequences_of_the_conditions_(M),(NM)_(F):(M)_and_(NM)_imply_(F)}.
  \\[1mm]~\eqref{lem:consequences_of_the_conditions_(M),(NM)_(F):hyperbolic} This follows
  from~\cite[Theorem~3.1, Lemma~3.4, Example~3.6]{Lueck-Weiermann(2012)}.
  \\[1mm]~\eqref{lem:consequences_of_the_conditions_(M),(NM)_(F):(F)_and_(V_II)_imply_(NV_II)}
  Let $V$ be a maximal virtually cyclic subgroup of type {II}. Since $V$ is maximal and
  $N_{\Gamma} V$ is a virtually cyclic subgroup of type {II} by
  assertion~\eqref{lem:consequences_of_the_conditions_(M),(NM)_(F):type_II} we must have
  $N_{\Gamma} V = V$.
  \\[1mm]~\eqref{lem:consequences_of_the_conditions_(M),(NM)_(F):(F)_and_manifold}
  By~\eqref{lem:consequences_of_the_conditions_(M),(NM)_(F):(F)_and_(V_II)_imply_(NV_II)},
  we only need prove that $\Gamma$ satisfies condition (V$_{\operatorname{II}}$).

  Note that any virtually cyclic subgroup of type II is isomorphic to $D_{\infty}$ by
  assertion~\eqref{lem:consequences_of_the_conditions_(M),(NM)_(F):type_II}. Let
  $V \subseteq \Gamma$ be a subgroup isomorphic to
  $D_{\infty} = \langle t,s \mid t^2 = 1, tst= s^{-1}\rangle$. First we show that there is
  a geodesic $c \colon \IR \to X$ such that $\im(c)$ is $V$-invariant.  Consider the
  isometry $l_s \colon X \to X$ given by multiplication with $s$. It is semi-simple,
  see~\cite[Proposition~6.10~(2) in~II.6 on page~233]{Bridson-Haefliger(1999)}.  Obviously
  it has no fixed point. Therefore it has to be a hyperbolic isometry in the sense
  of~\cite[Definition~6.3 in~II.6 on page~229]{Bridson-Haefliger(1999)}.  Hence there is
  an axis for $l_s$, i.e., a geodesic $c \colon \IR \to X$ such that
  $s\cdot c(\tau) = c(\tau + |l_s|)$ holds for all $\tau \in \IR$, where $|l_s| > 0$ is
  the translation length of $l_s$, see~\cite[Theorem~6.8~(1) in~II.6 on
  page~231]{Bridson-Haefliger(1999)}.  Let $Y$ be the subspace of $X$ appearing
  in~\cite[Theorem~6.8~(4) in~II.6 on page~231]{Bridson-Haefliger(1999)}.  Since $Y$ is
  closed and convex in $X$, it is itself a $\CATzero$-space.  Since we have for
  $\tau \in \IR$
  \[s\cdot (t\cdot c)(\tau) = t \cdot s^{-1}\cdot c(\tau) = t \cdot c(\tau - |l_s|),
  \]
  we get by $\tau \mapsto (t\cdot c)(-\tau)$ another axis for $l_s$. This implies that
  $l_t \colon X \to X$ leaves the subspace $\operatorname{Min}(l_s)$ invariant,
  see~\cite[Theorem~6.8~(3) in~II.6 on page~231]{Bridson-Haefliger(1999)}.  Hence there is
  a point $(y_0,\tau_0) \in Y \times \IR$ such that $t \cdot (y_0,\tau_0) = (y_0,\tau_0)$
  holds, see~\cite[Corollary II.2.8 on page 179]{Bridson-Haefliger(1999)}. Now the
  geodesic $c \colon \IR \to X$ given by $y_0$ is an axis for $l_s$ and satisfies
  $tc(\tau_0) = c(\tau_0)$.  We get for all $m \in \IZ$
  \[
    t ^\cdot c(\tau_0+ m \cdot |l_s|) = t\cdot s^m \cdot c(\tau_0) = s^{-m} \cdot t \cdot
    c(\tau_0) = s^{-m} \cdot c(\tau_0) = c(\tau_0 - m \cdot |l_s|) \in \im(c).
  \]
  This implies that $l_t(\im(c)) = \im(c)$. Since $l_s(\im(c)) = \im(c)$ holds, we
  conclude that $c$ is $V$-invariant.

  Next we show for two subgroups $V \subseteq V' \subseteq \Gamma$ and a geodesic
  $c \colon \IR \to X$ such that $V$ and $V'$ are isomorphic to $D_{\infty}$ and $\im(c)$
  is $V$-invariant, that $\im(c)$ is $V'$-invariant.  We can find a presentation of
  $V' =\langle t,s \mid t^2 = 1, tst= s^{-1}\rangle $ and a natural number $m \ge 1$ such
  that $V$ is generated by $t$ and $s^m$. Hence it suffices to show that an axis for $s^m$
  is automatically an axis for $s$. This follows by inspecting the proof
  of~\cite[Theorem~6.8~(2) in~II.6 on page~231]{Bridson-Haefliger(1999)}.

  Now consider any virtually cyclic subgroup $V$ of $\Gamma$ of type {II}.  We know
  already that $V$ has to be isomorphic to $D_{\infty}$. Choose a geodesic
  $c \colon \IR \to X$ such that $\im(c)$ is $V$-invariant. Let $\Gamma_c$ be the subgroup
  of $\Gamma$ consisting of elements, for which $l_{\gamma}(\im(c)) = \im(c)$ holds.  Note
  that $\Gamma_c$ is non-abelian, since it contains $V$.  Since the non-abelian group
  $\Gamma_c$ acts properly and cocompactly on $\IR$, it must be isomorphic to
  $D_{\infty}$, hence is virtually cyclic of type {II}.  Clearly $\Gamma_c$ is a maximal
  virtually cyclic subgroup of type {II} leaving $\im(c)$ invariant, and, in fact, it is
  the unique such maximal subgroup.  The preceding paragraph implies that $\Gamma_c$ is
  the unique maximal virtually cyclic subgroup of type {II} containing $V$.
   
  Hence $\Gamma$ satisfies condition (V$_{\operatorname{II}}$).
\end{proof}

Lemma~\ref{lem:consequences_of_the_conditions_(M),(NM)_(F)}~%
\eqref{lem:consequences_of_the_conditions_(M),(NM)_(F):(F)_and_manifold} was proved
in~\cite[Remark~1.2]{Connolly-Davis-Khan(2015)} in the case where $\Gamma$ acts
cocompactly, properly, and isometrically on a contractible Riemannian manifold $X$ of
non-positive sectional curvature and in the $\CATzero$-case in the
announcement~\cite{Khan(2013)}.


\typeout{-------------------------- Section: Computing the $K$-theory
  --------------------------}

\section{Equivariant homology theories, spectra over groupoids and the Full Farrell-Jones
  Conjecture}%
\label{sec:Equivariant_homology_theories_spectra_over_groupoids_and_the_Full_Farrell-Jones_Conjecture}


\subsection{Equivariant homology theories}\label{subsec:equivariant_homology_theories}

For the definition of a \emph{$G$-homology theory} $\calh_*^G$ for $G$-$CW$-pairs we refer
for instance to~\cite[Chapter~1]{Lueck(2002b)}. This is extended to the notion of an
\emph{equivariant homology theory} $\calh^?_*$ in~\cite[Chapter~1]{Lueck(2002b)}. Roughly
speaking, an equivariant homology theory assigns to every group $G$ a $G$-homology theory
$\calh_*^G$ and comes with a so-called induction structure, i.e., for any group
homomorphism $\alpha \colon G \to G'$ and $G$-$CW$-pair $(X,A)$, there is a natural
homomorphism $\ind_{\alpha} \colon \calh_*^G(X,A) \to \calh^{G'}_*(\alpha_*(X,A))$ of
$\IZ$-graded abelian groups, where $\alpha_*(X,A)$ is the induced $G'$-$CW$-pair.  It
satisfies certain naturality conditions and is compatible with the long exact sequence of
pairs and disjoint unions. If the kernel of $\alpha$ acts freely on $(X,A)$, the map
$\ind_{\alpha} \colon \calh_*^G(X,A) \to \calh^{G'}(\alpha_*(X,A))$ is an isomorphism.  In
particular we get for every group $G$ and subgroup $H$ a natural isomorphism
$\calh^G_*(G/H) \cong \calh^H_*(\pt)$ and for every free $G$-$CW$-pair $(X,A)$ a natural
isomorphism $\calh^G_*(X,A) \cong \calh^{\{1\}}_*(X/G,A/G)$, using the induction structure.

Given a map $f \colon X \to Y$ of $G$-$CW$-complexes, one can define a $\IZ$-graded
abelian group $\calh^G_*(f)$, which fits into a long exact sequence
\begin{multline*}
  \cdots \to \calh_n^G(X) \xrightarrow{f_n} \calh_n^G(Y) \to \calh^G_n(f)
  \xrightarrow{\partial_n} \calh_{n-1}^G(X)
  \\
  \xrightarrow{f_{n-1}} \calh_{n-1}^G(Y) \to \calh^G_{n-1} (f)
  \xrightarrow{\partial_{n-1}} \cdots.
\end{multline*}
  
Given a commutative square of $G$-$CW$-complexes
\[
  \Phi = \quad \raisebox{8mm}{\xymatrix{X_0 \ar[r]^{f_1} \ar[d]_{f_2} & X_1 \ar[d]^{g_1}
      \\
      X_2 \ar[r]_{g_2} & X }}
\]
one obtains a $\IZ$-graded abelian group $\calh^G_*(\Phi)$, which fits into an exact
sequence
\begin{multline*}
  \cdots \to \calh_n^G(f_2) \to \calh_n^G(g_1) \to \calh^G_n(\Phi)
  \xrightarrow{\partial_n} \calh_{n-1}^G(f_2)
  \\
  \to \calh_{n-1}^G(g_1) \to \calh^G_{n-1}(\Phi) \xrightarrow{\partial_{n-1}} \cdots.
\end{multline*}
If $\Phi$ is a $G$-homotopy pushout, e.g., $\Phi$ is a $G$-pushout and $f_1$ or $f_2$ is
an inclusion of $G$-$CW$-complexes, then $\calh^G_n(\Phi) = 0$ for all $n \in \IZ$.


\subsection{Equivariant homology theories from spectra over groupoids}%
\label{subsec:Equivariant_homology_theories_from_spectra_over_groupoids}

Let $\Groupoids$ be the category of small connected groupoids.  A
\emph{$\Groupoids$-spectrum} $\bfE$ is a functor from $\Groupoids$ to the category of
spectra $\Spectra$. Any $\Groupoids$-spectrum $\bfE$ gives rise to an equivariant homology
theory $H^?_*(-;\bfE)$ in the sense of~\cite[Chapter~1]{Lueck(2002b)}, see for
instance~\cite[Proposition~157 on page~796]{Lueck-Reich(2005)} which is based
on~\cite{Davis-Lueck(1998)}. Thus for any group $G$, we get a $G$-homology theory
$H^G_*(-;\bfE)$ such that for any subgroup $H$ we have
\[H_n^G(G/H;\bfE) \cong H_n^H(\pt,\bfE) \cong \pi_n(\bfE(\widehat{H})),
\]
where $\widehat{H}$ is the groupoid with one object and $H$ as its automorphism group.

Let $\bfE \colon \Groupoids\to \Spectra$ be a $\Groupoids$-spectrum.  Denote by
$\bfE\langle 1 \rangle$ the $\Groupoids$-spectrum obtained from $\bfE$ by passing to the
\mbox{$1$-connected} covering. There is a morphism $\bfE\langle 1 \rangle \to \bfE$ of
$\Groupoids$-spectra such that for every groupoid $\calg$ the map
$\pi_q(\bfE\langle 1 \rangle(\calg)) \to \pi_q(\bfE(\calg))$ is an isomorphism for
$q \ge 1$ and $\pi_q(\bfE\langle 1 \rangle(\calg)) = 0$ for $q \le 0$.  Define a sequence
of $\Groupoids$-spectra $\bfE\langle 1 \rangle \to \bfE \to \overline{\bfE}$ such that its
evaluation at any groupoid is a cofibration sequence of spectra.  For any groupoid,
$\pi_q(\overline{\bfE}(\calg)) = 0$ for $q \ge 1$ and
$\pi_q(\bfE(\calg)) \xrightarrow{\cong} \pi_q(\overline{\bfE}(\calg))$ is an isomorphism
for $q \le 0$. For any square $\Phi$ of $\Gamma$-$CW$-complexes, there is a long exact
sequence, natural in $\Phi$,
\begin{multline}
  \cdots \to H^{\Gamma}_{n+1}(\Phi;\bfE\langle 1 \rangle ) \to H^{\Gamma}_{n+1}(\Phi;\bfE)
  \to H^{\Gamma}_{n+1}(\Phi;\overline{\bfE})
  \\
  \to H^{\Gamma}_{n}(\Phi;\bfE\langle 1 \rangle ) \to H^{\Gamma}_{n}(\Phi;\bfE) \to
  H^{\Gamma}_{n}(\phi;\overline{\bfE}) \to \cdots.
  \label{long_exact_homology_sequence_for_E_to_E_langle_1_rangle_to_overline(E)}
\end{multline}

\subsection{Some basics about $K$-and $L$-theory of groups rings}%
\label{subsec:Some_basics_about_K-and_L-theory_of_groups_rings}

Let $K_n(RG)$ denote the \emph{$n$-th algebraic $K$-group} of the group ring $RG$ in the
sense of Quillen for $n \ge 0$ and in the sense of Bass for $n \le -1$.  Let $\NK_n(R)$
denote the \emph{Bass-Nil-groups} of $R$, which are defined as the cokernel of the map
$K_n(R) \to K_n(R[x])$.  Recall that the Bass-Heller-Swan decomposition says
\begin{eqnarray}
  K_n(R\IZ) & \cong & K_n(R) \oplus K_{n-1}(R) \oplus \NK_n(R) \oplus \NK_n(R).
                      \label{Bass-Heller-Swan}
\end{eqnarray}
If $R$ is a regular ring, then $\NK_n(R) = 0$ for every $n \in \IZ$, see for
instance~\cite[Theorems~3.3.3 and 5.3.30]{Rosenberg(1994)}.

For a ring with involution $R$, for a integer $n$, and for
$ j \in \{1,0,-1,-2, \ldots \} \amalg \{ -\infty\}$, one defines the Wall-Ranicki
algebraic $L$-group $L_n^{\langle j \rangle}(R)$, combine~\cite[Section~13]{Ranicki(1992)}
with~\cite[Section~17]{Ranicki(1992a)}.
These groups are 4-periodic in $n$.  The index $j$ is called the decoration.  The group
$L_n^{\langle - \infty \rangle}(R)$ is called the {\em ultimate lower quadratic
  $L$-group}.  The $L$-groups are given as the homotopy groups of a 4-periodic spectrum
$\bfL^{\langle j \rangle}(R)$ (see~\cite[Section 13]{Ranicki(1992)}).  When $R = \Z$, the
$L$-groups and $L$-spectra are constant in $j$, that is, they are independent of the
decoration.  Often $L^{\langle j \rangle}$ and $L^{\langle 1 \rangle}$ are denote by $L^p$
and $L^h$ respectively.  For groups, $RG$, one also includes $j = 2$, which is also
denoted by $L^s(RG)$ (see~\cite[page 105]{Ranicki(1981)}).

For $ j \in \{2,1,0,-1,\ldots \} \amalg \{ -\infty\}$, there are  $\Groupoids$-spectra, see~\cite[Section~2]{Davis-Lueck(1998)},
\begin{eqnarray*}
\bfK_R \colon \Groupoids & \to &\Spectra;
\\
\bfL^{\langle j \rangle}_R \colon \Groupoids & \to &\Spectra,
\end{eqnarray*}
coming with  natural identifications
\begin{eqnarray*}
\pi_n(\bfK_R(\widehat{G})) & = & K_n(RG);
\\
\pi_n(\bfL^{\langle j \rangle}_R(\widehat{G})) & = & L^{\langle j \rangle}_n(RG).
\end{eqnarray*}

If $G$ is a group with an orientation character, then some modifications to the above
theory must be made, see~\cite{Bartels-Lueck(2009coeff)}.  An \emph{orientation character}
is a homomorphism $w \colon G \to \{\pm 1\}$.  This determines the \emph{$w$-twisted
  involution on $R G$} sending $\sum_{g \in G} \lambda_g \cdot g$ to
$\sum_{g \in G} \overline \lambda_g \cdot w(g) \cdot g^{-1}$.  The corresponding
$L$-groups of this ring with involution are denoted $L_{n}^{\langle j\rangle}(RG,w)$.  To
deal with non-trivial orientation characters, one needs functors
\[
  \bfL^{\langle j \rangle}_{R,w} \colon \Groupoids \downarrow \{\pm 1\} \to \Spectra
\]
coming with a natural identification
\[
  \pi_n(\bfL^{\langle j \rangle}_{R,w}(\widehat{G})) = L^{\langle j \rangle}_n(RG,w).
\]

A group with orientation character $(G,w)$ determines a $G$-homology theory denoted by
$H^G_*(-; \bfL^{\langle j \rangle}_{R,w})$.  There is an isomorphism
$H_n^G(G/H;\bfL^{\langle j \rangle}_{R,w}) \cong L_n^{\langle j \rangle}(RH;w|_H)$.

 For a group with orientation character $(G,w)$ and for a free $G$-CW-complex $X$, 
define the \emph{periodic $n$-th structure group with decoration $\langle j \rangle$} to be
\begin{eqnarray*}
\cals_n^{\per,\langle j \rangle}(X/G) 
& := &
H_n^G(X\to \pt;\bfL_{\Z,w}^{\langle j \rangle}).
\end{eqnarray*}
It the orientation character is trivial, these groups fit into the periodic version of the algebraic 
surgery exact sequence with decoration $\langle j \rangle$,
\begin{multline*}
\cdots \to H_n(X/G;\bfL(\Z)) \to
L_n^{\langle j \rangle}(\Z G) \to \cals_n^{\per,\langle j 
 \rangle}(X/G)
\\
\to H_{n-1}(X/G;\bfL(\Z)) \to
L_{n-1}^{\langle j\rangle}(\Z G) \to \cdots.
\end{multline*}
Here we have identified $H^G_*(X;\bfL_{\Z}^{\langle j \rangle})$ with $H_*(X/G;\bfL(\Z))$
using homotopy invariance and the induction structure.  This periodic surgery sequence
appears in the classification of ANR-homology manifolds in
Bryant-Ferry-Mio-Weinberger~\cite[Main Theorem]{Bryant-Ferry-Mio-Weinberger(1996)}.  It is
related to the algebraic surgery exact sequence and thus to the classical surgery
sequence, see Ranicki~\cite[Section~18]{Ranicki(1992)}.

For $n \in \IZ$, the abelian group
$H_n^{D_{\infty}}(\eub{D_{\infty}} \to \pt;\bfL_R^{\langle - \infty \rangle})$ can be
identified with the $\UNil(R;R,R)$-groups of Cappell~\cite{Cappell(1974c)} see
Remark~\ref{rem:Identification_with_UNil-groups}.  If
$w : D_\infty = \langle a,b \mid a^2=b^2=1 \rangle \to \{\pm1\}$ is given by
$w(a) = w(b) = (-1)^n$, then
$H_n^{D_{\infty}}(\eub{D_{\infty}} \to \pt;\bfL_{\Z,w}^{\langle - \infty \rangle})$ agrees
with the group $\UNil_n(\IZ;\IZ^{(-1)^n},\IZ^{(-1)^n})$ appearing in
Theorem~\ref{the:existence_intro} and Theorem~\ref{the:uniqueness_intro}.

  
\subsection{The Full Farrell-Jones Conjecture}%
\label{subsec:The_Full_Farrell_Jones_Conjecture}

For the precise formulation of the Full-Farrell-Jones Conjecture and its current status we
refer to~\cite[Sections~13.6 and~16.2]{Lueck(2022book)}.  It is the most general version of the Farrell-Jones
Conjecture. We call a group $G$ a \emph{Farrell-Jones group}, if it satisfies the Full
Farrell-Jones Conjecture. The class of Farrell-Jones groups contains   $\CAT(0)$-groups,
lattices in locally compact second countable
Hausdorff groups, solvable groups, and fundamental group of manifolds of dimension
$\le 3$.  It is closed under taking subgroups, passing to overgroups of finite index, and
colimits over directed systems of groups with not necessarily injective structure
maps. For our purposes it suffices to know that the Full-Farrell-Jones Conjecture implies
that the projection $\edub{G} \to \pt$ induces for every $n \in \IZ$ and every ring $R$
(with involution) isomorphisms
\begin{eqnarray*}
  H_n^G(\edub{G};\bfK_R)
  & \xrightarrow{\cong} &
 H_n^G(\pt;\bfK_R) = K_n(RG);
  \\
  H_n^G(\edub{G};\bfL_{R,w}^{\langle -\infty \rangle})
  & \xrightarrow{\cong} &
H_n^G(\pt;\bfL_{R,w}^{\langle -\infty \rangle}) = L_n^{\langle -\infty \rangle}(RG,w),
\end{eqnarray*}                         
where $\edub{G}$ is the classifying space for the family of virtually cyclic subgroups of $G$.


\typeout{-------------------------- Section: Computing the $K$-theory   --------------------------}

\section{Computing the $K$-theory}%
\label{sec:Computing_the_K_theory}

\subsection{The definition of Whitehead groups}\label{subsec:The_definition_of_Whitehead_groups}

Define for a group $G$ and a ring $R$ the \emph{$n$-th Whitehead group} $\Wh_n(G;R)$ to be
$H_n^G(EG \to \pt;\bfK_R)$.  The long exact sequence of the map $EG \to \pt$ yields a long
exact sequence
\begin{multline*}
  \cdots \to H_n(BG;\bfK(R))  \to K_n(RG) \to \Wh_n(G;R)
  \\
  \to H_{n-1}(BG;\bfK(R)) \to    K_{n-1}(RG) \to \cdots,
  \label{long_exact_sequence_for_Wh_n(G;R)}
\end{multline*}
where $H_*(-;\bfK(R))$ is the generalized (non-equivariant) homology theory associated to
the non-connective $K$-theory spectrum $\bfK(R)$ of $R$. Suppose that $R$ is regular. Then
$K_n(R) = 0$ for $n \le -1$.  Hence the canonical map $K_n(RG)\to \Wh_n(G;R)$ is
bijective for $n \le -1$, we have the split short exact sequence
$0 \to K_0(R) \to K_0(RG) \to \Wh_0(G;R) \to 0$, and the short exact sequence
$H_1(BG,\bfK(R)) \to K_1(RG) \to \Wh_1(G;R) \to 0$.  If $R$ is regular and the canonical map
$K_0(\IZ) \to K_0(R)$ is bijective, then we get an isomorphism
$\widetilde{K}_0(RG) \xrightarrow{\cong} \Wh_0(G;R)$ and a split short exact 
  sequence $0 \to K_1(R) \oplus G/[G,G] \to K_1(RG) \to \Wh_1(G;R) \to 0$. If
  $R = \IZ$, then $\Wh_1(G;\IZ)$ agrees with  the classical Whitehead group $\Wh(G)$,
  $\widetilde{K}_0(\IZ G) \cong \Wh_0(G;\IZ)$, and $K_n(\IZ G) \cong \Wh_n(G,\IZ)$ for
  $n \le -1$.

Whitehead groups arise naturally when studying $h$-cobordisms, pseudoisotopy, and Waldhausen's A-theory. Their
geometric significance is reviewed, for example, in
Dwyer-Weiss-Williams~\cite[Section~9]{Dwyer-Weiss-Williams(2003)} and
L\"uck-Reich~\cite[Section~1.4.1]{Lueck-Reich(2005)}, where additional references can also
be found.  When $G=\IZ$, it follows from~\eqref{Bass-Heller-Swan} and the fact that
$H_n^{\IZ}(E\IZ;\bfK_R)\cong K_n(R) \oplus K_{n-1}(R)$ that there is an identification
\begin{eqnarray}
H_n^{\IZ}(E\IZ \to \pt;\bfK_R) & \cong & \NK_n(R) \oplus \NK_n(R).
\label{H_nZ(EZ_to_pt;bfK_R)_cong_NK_n(R)_oplus_NK_n(R)}
\end{eqnarray}
  
  
  \subsection{Computing Whitehead groups}\label{subsec:Computing:Whitehead_groups}
  
  We will later need the following result to apply the Farrell-Jones Conjecture.
  
\begin{theorem}\label{the:constructing_underline(E)Gamma)}\ 
  \begin{enumerate}
  \item\label{the:constructing_underline(E)Gamma):Fin}
    Suppose that $\Gamma$ satisfies (M) and (NM).
    Let $\calm$ be a
  complete system of representatives of the conjugacy classes of maximal finite subgroups
  $F\subseteq \Gamma$.  Consider the cellular $\Gamma$-pushout
\[
  \xymatrix{\coprod_{F\in\calm} \Gamma \times_{F}EF
    \ar[d]_{\coprod_{F\in\calm}p_F} \ar[r]^-i & E\Gamma \ar[d] \\
    \coprod_{F\in\calm} \Gamma/F \ar[r] & X}
\]
where the map $p_F$ comes from the projection $EF \to \pt$,
and $i$ is an inclusion of $\Gamma$-$CW$-complexes.

Then $X$ is a model for $\underline{E}\Gamma$;

\item\label{the:constructing_underline(E)Gamma):Vcyc}
  Assume that $\Gamma$ satisfies conditions (V) and (NV).  
  Let $\calv$ be a complete system of representatives of the conjugacy
  classes of maximal infinite virtually cyclic subgroups $V\subseteq \Gamma$.

  Consider the cellular $\Gamma$-pushout
\[
  \xymatrix{\coprod_{V\in\calv} \Gamma \times_{V}\eub{V}
    \ar[d]_{\coprod_{V\in\calv} p_V} \ar[r]^-i & \eub{\Gamma} \ar[d] \\
    \coprod_{V\in\calv} \Gamma/V \ar[r] & Y}
\]
where the map $p_V$ comes from the projection $\eub{V} \to \pt$,
and $i$ is an inclusion of $\Gamma$-$CW$-complexes.

Then $Y$ is a model for $\edub{V}$.

\item\label{the:constructing_underline(E)Gamma):Vcyc_II}
  Assume that $\Gamma$ satisfies conditions (V$_{\operatorname{II}}$) and (NV$_{\operatorname{II}}$).  
  Let $\calv_{II}$ be a complete system of representatives of the conjugacy
  classes of maximal infinite virtually cyclic subgroups of type {II}.
  Let $\EGF{V}{\calvcyc_{\operatorname{I}}}$ be the classifying space for the family $\calvcyc_{\operatorname{I}}$
    of $G$, which consists of finite subgroups of infinite  virtually cyclic subgroups of type I.

  Consider the cellular $\Gamma$-pushout
\[
  \xymatrix{\coprod_{V\in\calvII} \Gamma \times_{V}\EGF{V}{\calvcyc_{\operatorname{I}}}
    \ar[d]_{\coprod_{V\in\calv_{II}}p_V} \ar[r]^-i & \EGF{\Gamma}{\calvcyc_{\operatorname{I}}}\ar[d] \\
    \coprod_{V\in\calvII}\Gamma/V \ar[r] & Z}
\]
where the map $p_V$ comes from the projection $\EGF{V}{\calvcyc_I} \to \pt$,
and $i$ is an inclusion of $\Gamma$-$CW$-complexes.

Then $Z$ is a model for $\edub{V}$.
\end{enumerate}
\end{theorem}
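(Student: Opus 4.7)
The plan is to verify in each part that the $\Gamma$-$CW$-complex in the lower-right of the pushout has the characteristic fixed-point behavior of the target classifying space. The key structural observation I will use throughout is that in each pushout the top horizontal arrow is an inclusion of $\Gamma$-$CW$-complexes, so taking $H$-fixed points for any subgroup $H \subseteq \Gamma$ yields a pushout diagram of $H$-$CW$-complexes whose lower-right corner computes the homotopy type of $X^H$, $Y^H$, or $Z^H$. Hence it suffices to compute the $H$-fixed points of the three other corners and assemble them.

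For assertion~\eqref{the:constructing_underline(E)Gamma):Fin} I will check the two characterizing conditions for $\underline{E}\Gamma$. If $H$ is infinite, all three corners have empty $H$-fixed sets: $E\Gamma$ is free, $\Gamma \times_F EF$ carries only finite isotropy since $EF$ is a free $F$-space, and $H$ cannot be conjugated into a finite $F$. Hence $X^H = \emptyset$. If $H = 1$, the left vertical arrow is a non-equivariant homotopy equivalence, because each component of $\Gamma \times_F EF$ is contractible and maps onto the corresponding coset of $\Gamma/F$, so the pushout is homotopy equivalent to $E\Gamma \simeq \pt$. Finally, for a non-trivial finite $H$, the two upper corners still vanish, and I reduce to the set $(\coprod_F \Gamma/F)^H = \{\gamma F : \gamma^{-1} H \gamma \subseteq F,\, F \in \calm\}$. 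Condition (M) gives a unique maximal finite subgroup $F_H$ containing $H$, whose $\Gamma$-conjugacy class is represented by a unique $F \in \calm$, and then (NM) collapses the relevant cosets to $N_{\Gamma}(F)/F = F/F$, a single point. Thus $X^H$ is contractible, and $X$ is a model for $\underline{E}\Gamma$.

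Assertions~\eqref{the:constructing_underline(E)Gamma):Vcyc} and~\eqref{the:constructing_underline(E)Gamma):Vcyc_II} follow essentially the same template, with the appropriate virtually-cyclic family and with conditions (V)+(NV), respectively (V$_{\operatorname{II}}$)+(NV$_{\operatorname{II}}$), playing the role of (M)+(NM). In~\eqref{the:constructing_underline(E)Gamma):Vcyc} the cases to handle are: $H$ not virtually cyclic (all three corners vanish on $H$-fixed sets, using that $\eub{V}$ has finite isotropy and that an infinite-non-virtually-cyclic $H$ cannot inject into the virtually cyclic $V$); $H$ virtually cyclic and infinite (the upper corners vanish as $H$ is infinite, while (V)+(NV) forces $(\Gamma/V)^H$ to a single point); and $H$ finite (where $(\eub{\Gamma})^H$ is contractible, the corresponding fixed sets of $\eub{V}$ are contractible, and the pushout assembles to a contractible space). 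In~\eqref{the:constructing_underline(E)Gamma):Vcyc_II} the top-right corner is replaced by $\EGF{\Gamma}{\calvcyc_{\operatorname{I}}}$, so one has to split the analysis according to whether $H$ lies in $\calvcyc_{\operatorname{I}}$ or is virtually cyclic of type {II}; the induced spaces $\Gamma \times_V \EGF{V}{\calvcyc_{\operatorname{I}}}$ detect the type~I behavior inside each maximal type~II $V$, and (V$_{\operatorname{II}}$)+(NV$_{\operatorname{II}}$) pin down the type~II fixed cosets to a single point.

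The main obstacle I anticipate is the coset-counting step that is common to all three parts: showing that, for $H$ in the target family, the relevant $(\Gamma/F)^H$ or $(\Gamma/V)^H$ collapses to exactly one point. This requires combining the respective uniqueness condition ((M), (V), or (V$_{\operatorname{II}}$)) with the matching self-normalizer condition ((NM), (NV), or (NV$_{\operatorname{II}}$)) to exhibit a canonical coset representative. Once this pointwise count is established, assembling the three-corner pushout to deduce contractibility of the fixed-point sets, and hence the classifying-space property, is formal.
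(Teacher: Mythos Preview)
Your direct fixed-point verification is correct. The paper, by contrast, proves this theorem by citation: parts~\eqref{the:constructing_underline(E)Gamma):Fin} and~\eqref{the:constructing_underline(E)Gamma):Vcyc} follow from Corollary~2.11, and part~\eqref{the:constructing_underline(E)Gamma):Vcyc_II} from Corollary~2.8, of L\"uck--Weiermann. Those corollaries give a general recipe for building $E_{\mathcal{G}}(\Gamma)$ from $E_{\mathcal{F}}(\Gamma)$ via a pushout, under hypotheses on the subgroups in $\mathcal{G}\setminus\mathcal{F}$ that specialize precisely to (M)+(NM), (V)+(NV), and (V$_{\operatorname{II}}$)+(NV$_{\operatorname{II}}$) in the three cases at hand. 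Your argument is essentially the specialization of their proof to these instances, carried out by hand: it is self-contained and makes the role of the uniqueness and self-normalizer conditions explicit in the coset counts, while the paper's approach is more economical and situates the result in a general framework. One minor remark: in parts~\eqref{the:constructing_underline(E)Gamma):Vcyc} and~\eqref{the:constructing_underline(E)Gamma):Vcyc_II}, the assertion that ``the pushout assembles to a contractible space'' for $H$ in the smaller family rests on the gluing lemma---the left vertical map on $H$-fixed points is a componentwise homotopy equivalence onto a discrete set, so the right vertical map from $(\eub{\Gamma})^H$ (respectively $(\EGF{\Gamma}{\calvcyc_{\operatorname{I}}})^H$) into the pushout is a homotopy equivalence; you do not name this explicitly, but the reasoning is sound.
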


\begin{proof} This follows from~\cite[Corollary~2.11]{Lueck-Weiermann(2012)} for
  assertions~\eqref{the:constructing_underline(E)Gamma):Fin}
  and~\eqref{the:constructing_underline(E)Gamma):Vcyc}. The proof for
  assertion~\eqref{the:constructing_underline(E)Gamma):Vcyc_II} is analogous, just
  apply~\cite[Corollary~2.8]{Lueck-Weiermann(2012)}.
 \end{proof}
  
  The next result has already been proved for $R = \IZ$ in~\cite[Theorem~5.1~(d)]{Davis-Lueck(2003)}.

\begin{theorem}\label{the:computing_K-groups} 
  Let $R$ be a regular ring and let $\Gamma$ be a Farrell-Jones group satisfying
  conditions (M) and (NM). Let $\calm$ be a complete system of representatives of the
  conjugacy classes of maximal finite subgroups.  Then the canonical map
  \[
    \bigoplus_{F \in \calm} \Wh_n(F;R)  \xrightarrow{\cong} \Wh_n(\Gamma;R)
  \]
  is bijective for all $n \in \IZ$.
\end{theorem}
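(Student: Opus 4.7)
My strategy is to interpret $\Wh_n(\Gamma;R)$ as the relative equivariant homology $H^\Gamma_n(E\Gamma\to\pt;\bfK_R)$ and to factor the map $E\Gamma\to\pt$ through $\underline{E}\Gamma$ and $\edub{\Gamma}$; the pushout for $\underline{E}\Gamma$ from Theorem~\ref{the:constructing_underline(E)Gamma)}~\eqref{the:constructing_underline(E)Gamma):Fin} will produce the direct sum $\bigoplus_F\Wh_n(F;R)$ on one side, while the Farrell-Jones Conjecture will kill the tail on the other.

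Step one (pushout decomposition). Conditions (M) and (NM) let me apply Theorem~\ref{the:constructing_underline(E)Gamma)}~\eqref{the:constructing_underline(E)Gamma):Fin} to obtain a $\Gamma$-pushout $\Phi$ of $\Gamma$-CW-inclusions realizing $\underline{E}\Gamma$. Applying the equivariant homology theory $H^\Gamma_*(-;\bfK_R)$ from Subsection~\ref{subsec:equivariant_homology_theories}, the pushout property gives $H^\Gamma_n(\Phi;\bfK_R)=0$, hence a natural isomorphism
\[
H^\Gamma_n(E\Gamma\to\underline{E}\Gamma;\bfK_R)\;\cong\;H^\Gamma_n\Bigl(\coprod_{F\in\calm}\Gamma\times_F EF\to\coprod_{F\in\calm}\Gamma/F;\bfK_R\Bigr).
\]
The induction isomorphism (since the kernel of $F\hookrightarrow\Gamma$ is trivial) together with disjoint-union invariance collapses the right-hand side to $\bigoplus_{F\in\calm}H^F_n(EF\to\pt;\bfK_R)=\bigoplus_{F\in\calm}\Wh_n(F;R)$, and by construction this identification implements the canonical map in the statement.

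Step two (Farrell-Jones reduction). It remains to produce a natural isomorphism $\Wh_n(\Gamma;R)=H^\Gamma_n(E\Gamma\to\pt;\bfK_R)\xrightarrow{\cong}H^\Gamma_n(E\Gamma\to\underline{E}\Gamma;\bfK_R)$. The long exact sequence of the composite $E\Gamma\to\underline{E}\Gamma\to\pt$ shows this reduces to $H^\Gamma_n(\underline{E}\Gamma\to\pt;\bfK_R)=0$. Since $\Gamma$ is a Farrell-Jones group, Subsection~\ref{subsec:The_Full_Farrell_Jones_Conjecture} yields $H^\Gamma_n(\edub{\Gamma}\to\pt;\bfK_R)=0$, so it suffices to verify the relative $\calfin$-to-$\calvcyc$ vanishing $H^\Gamma_n(\underline{E}\Gamma\to\edub{\Gamma};\bfK_R)=0$.

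Step three (relative $\calfin$-to-$\calvcyc$ vanishing). Here the regularity of $R$ enters. By the transitivity (induction) principle for equivariant homology, it suffices to verify the analogous vanishing for each infinite virtually cyclic subgroup $V\subseteq\Gamma$. By Lemma~\ref{lem:consequences_of_the_conditions_(M),(NM)_(F)}, conditions (M) and (NM) force (F), so the only possibilities are $V\cong\IZ$ or $V\cong D_\infty$. For $V\cong\IZ$, one has $\underline{E}V=E\IZ$ and $\edub{V}=\pt$, and~\eqref{H_nZ(EZ_to_pt;bfK_R)_cong_NK_n(R)_oplus_NK_n(R)} identifies the relative group with $NK_n(R)\oplus NK_n(R)$, which vanishes for regular $R$. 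For $V\cong D_\infty=\IZ/2\ast\IZ/2$, the relative group is a Waldhausen Nil-summand of $K_n(R[D_\infty])$ coming from the amalgamation over the trivial subgroup, and this summand vanishes for regular $R$. The vanishing for $D_\infty$ is the main obstacle, requiring a separate algebraic $K$-theory input from Waldhausen's theory of generalized free products; granting it, the three steps compose to give the desired isomorphism, and naturality ensures the composite is the canonical map $\bigoplus_{F\in\calm}\Wh_n(F;R)\to\Wh_n(\Gamma;R)$.
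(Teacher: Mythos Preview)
Your proof is correct and follows the same overall architecture as the paper's: identify $\Wh_n(\Gamma;R)$ with $H^\Gamma_n(E\Gamma\to\pt;\bfK_R)$, split off the piece $H^\Gamma_n(E\Gamma\to\eub{\Gamma};\bfK_R)\cong\bigoplus_{F\in\calm}\Wh_n(F;R)$ via the pushout of Theorem~\ref{the:constructing_underline(E)Gamma)}~\eqref{the:constructing_underline(E)Gamma):Fin}, and then kill $H^\Gamma_n(\eub{\Gamma}\to\pt;\bfK_R)$ using the Farrell--Jones Conjecture together with a $\calfin$-to-$\calvcyc$ comparison. The one substantive difference lies in how the last step is executed. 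The paper inserts the intermediate family $\calvcyc_I$ and invokes~\cite[Remark~1.6]{Davis-Quinn-Reich(2011)} to get $H^\Gamma_n(\EGF{\Gamma}{\calvcyc_I};\bfK_R)\xrightarrow{\cong}H^\Gamma_n(\edub{\Gamma};\bfK_R)$ for \emph{any} ring $R$; this reduces the Transitivity Principle check to infinite cyclic $V$ only, where Bass--Heller--Swan and regularity of $R$ finish the argument. You instead go directly from $\calfin$ to $\calvcyc$ and therefore must also treat $V\cong D_\infty$, which you handle by appealing to Waldhausen's Nil-vanishing for amalgamated products over the regular ring $R$. Both routes are valid; the paper's has the minor advantage of isolating the regularity hypothesis in the single Bass--Heller--Swan input and citing a clean black-box for the type~II reduction, whereas yours is more self-contained but requires the additional (and somewhat heavier) Waldhausen input for $D_\infty$.
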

\begin{proof}
  Since $\Gamma$ is  a Farrell-Jones group,
  \[H_n^{\Gamma}(\edub{\Gamma};\bfK_R) \xrightarrow{\cong} H_n^{\Gamma}(\pt;\bfK_R)
 \]
 is an isomorphism for all $n \in \IZ$. The relative assembly map
 \[
 H_n^{\Gamma}(\EGF{\Gamma}{\calvcyc_I};\bfK_R) \xrightarrow{\cong} H_n^{\Gamma}(\edub{\Gamma};\bfK_R)
 \]
 is an isomorphism for all $n \in \IZ$ by~\cite[Remark~1.6]{Davis-Quinn-Reich(2011)}.
 Every element $V \in \calvcyc_I$, which is infinite, is an infinite cyclic group by
 Lemma~\ref{lem:consequences_of_the_conditions_(M),(NM)_(F)}.  If $V$ is infinite cyclic,
 we get an isomorphism $H_n^{V}(\eub{V} \to\pt;\bfK_R) \cong \NK_n(R) \oplus \NK_n(R)$
 from the Bass-Heller-Swan decomposition.  Since $R$ is regular, $\NK_n(R)$ vanishes,
 see for instance~\cite[Theorems~3.3.3 and 5.3.30]{Rosenberg(1994)}.  Hence we conclude
 that the assembly map
 \[H_n^V(EV;\bfK_R) \to H_n^V(\pt;\bfK_R)
 \]
 is bijective for all $n \in \IZ$ and $V \in \calvcyc_I \setminus \calfin$.
 We conclude from the Transitivity Principle, see for instance~\cite[Theorem~65 on page 742]{Lueck-Reich(2005)},
 that the map
 \[
 H_n^{\Gamma}(\eub{\Gamma};\bfK_R) \xrightarrow{\cong} H_n^{\Gamma}(\EGF{\Gamma}{\calvcyc_I};\bfK_R)
 \]
 is an isomorphism for all $n \in \IZ$. Hence the map
  induced by the projection $\eub{\Gamma} \to \pt$
  \[H_n^{\Gamma}(\eub{\Gamma};\bfK_R) \xrightarrow{\cong} H_n^{\Gamma}(\pt;\bfK_R)
 \]
 is an isomorphism for all $n \in \IZ$. This implies that the map
 \[
  H_n^{\Gamma}(E\Gamma \to \eub{\Gamma},\bfK_R)\to H_n^{\Gamma}(E\Gamma\to \pt,\bfK_R) =  \Wh_n(G;R)
\]
is bijective for all $n \in \IZ$. Since $\Gamma$ satisfies (M) and (NM), we get from excision and 
Theorem~\ref{the:constructing_underline(E)Gamma)}~\eqref{the:constructing_underline(E)Gamma):Fin}
isomorphisms
\[
\bigoplus_{F \in \calm} H_n^{F}(EF \to \pt;\bfK_R) \xrightarrow{\cong} H_n^{\Gamma}(E\Gamma \to \eub{\Gamma},\bfK_R)
\]
for $n \in \IZ$. Since $H_n^{F}(EF \to \pt;\bfK_R) = \Wh(F;R)$, the proof of 
Theorem~\ref{the:computing_K-groups}  is finished.
\end{proof}

If $\Gamma$ satisfies (M), (NM), (V) and (NV) and the Farrell-Jones Conjecture, one can
also compute the Whitehead group $\Wh_n(\Gamma;R)$ for arbitrary $R$.  Namely, if we
denote by $\calvI$ and $\calvII$ the subset of $\calv$ consisting virtually cyclic
subgroups of type I and of type {II} respectively, then
\begin{align*}
\Wh_n(\Gamma;R) & \cong H_n^\Gamma(E\Gamma \to \pt;\bfK_R) \\
& \cong H_n^\Gamma(E\Gamma \to \edub{\Gamma}  ;\bfK_R) \\
& \cong H_n^\Gamma(E\Gamma \to \eub{\Gamma}  ;\bfK_R) \oplus H_n^\Gamma(\eub{\Gamma} \to \edub{\Gamma}  ;\bfK_R) \\ 
& \cong   \bigoplus_{F \in \calm} H_n^F(EF \to \pt  ;\bfK_R)  \oplus \bigoplus_{V \in \calv} H_n^V(\eub{V} \to \pt  ;\bfK_R) \\
\cong&  \bigoplus_{F \in \calm} \Wh_n(F;R) \oplus \bigoplus_{V \in \calvI} \NK_n(R) \oplus
  \NK_n(R) \oplus \bigoplus_{V \in \calvII} \NK_n(R).
\end{align*}
The first isomorphism is by definition, the second by the Farrell-Jones Conjecture, the
third by~\cite{Bartels(2003b)}, the fourth by
Theorem~\ref{the:constructing_underline(E)Gamma)}~\eqref {the:constructing_underline(E)Gamma):Fin}
and~\eqref{the:constructing_underline(E)Gamma):Vcyc}, and the last by the
Bass-Heller-Swan decomposition if $V \in \calvI$ and  
by~\cite[Corollary 3.27]{Davis-Khan-Ranicki(2011)} if $V \in \calvII$.


\typeout{-------------------------- Section: Computing the L-theory   --------------------------}

\section{Computing the $L$-theory}%
\label{sec:Computing_the_L-theory}

  
\subsection{Some basics about $K$- and $L$-theory for additive categories with involution}%
\label{subsec:Some_basics_about_K-and_L-theory_for_additive_categories_with_involution}

Although we are only interested in the $L$-groups of group rings, we need some input from
the $L$-theory for additive categories $\cala$ with involution, see
Remark~\ref{rem:decorated_L-theory_is_not_compatible_with_finite_products_of_rings}.  Ranicki
defined decorated $L$-groups $L_n^{\langle j \rangle}(\cala)$ for $n \in \IZ$ and
$j \in \{1,0,-1,-2,\ldots \} \amalg \{-\infty\}$ in~\cite[Section~13
and~17]{Ranicki(1992a)}.  By convention $L_n^{\langle 1 \rangle}(\cala)$ agrees with the
standard $L$-theory $L_n(\cala)$ of $\cala$ and $L_n^{\langle 0 \rangle}(\cala)$ is the
standard $L$-theory $L_n(\Idem(\cala))$ of the idempotent completion
$\Idem(\cala)$. There is a Shaneson splitting and there are Rothenberg sequences,
see~\cite[Theorem~17.2]{Ranicki(1992a)} or~\eqref{Rothenberg_sequences},~\eqref{Shaneson_splitting},
and~\eqref{Shaneson_splitting_infty}.

Given a ring $S$, let $\calf(S)$ be the following small additive category. The set of
objects is $\{[n] \mid n \in \IZ, n \ge 0\}$. A morphism $A \colon [n] \to [m]$ for
$m,n \ge 1$ is given by a $m$-by-$n$ matrix with entries in $S$.  The set of morphisms $[n] \to [m]$ is
defined to be $\{0\}$, if the source or target is $[0]$.  Composition is given by matrix
multiplication. This category is equivalent to the category of finitely generated free
$S$-modules. We define the small additive category $\calp(S)$ to be the idempotent
completion of $\calf(S)$. One easily checks that $\calp(S)$ is equivalent to the additive
category of finitely generated projective $S$-modules.  If $S$ is a ring with involution,
then $\calf(S)$ and $\calp(S)$ become additive categories with involution. One defines
$L_n^{\langle j \rangle}(S) := L_n^{\langle j \rangle}(\calf(S))$. With these conventions
$L_n^{\langle 1 \rangle}(S) = L_n^{h}(S) = L_n(S)$ and
$L_n^{\langle 0 \rangle}(S) = L_n^{p}(S)$.

One reason why it is better to work with additive categories with involutions instead of
rings is the compatibility with direct sums and  direct products. Namely, for a set of additive
categories $\{\cala_i \mid i \in I\}$ for arbitrary $I$, for
$n \in \IZ$, and $j \in \{1,0,-1,-2,\ldots \}$,  the canonical map given by the  projection   yields isomorphisms
\begin{eqnarray}
  K_n(\prod_{i \in I} \cala_i)
  &  \xrightarrow{\cong}  &
  \prod_{i \in I} K_n(\cala_i);
\label{K-theory_and_infinite_products}
\\
  L_n^{\langle j \rangle} (\prod_{i \in I} \cala_i)
  &  \xrightarrow{\cong}  &
 \prod_{i \in I} L_n^{\langle j \rangle}(\cala_i).
\label{L-theory_and_infinite_products}
\end{eqnarray}
This is not true for the decoration $j = -\infty$ in general, unless $I$ is finite or
there exists $j_0 \in \IZ$ such that $K_j(\cala_i) = 0$ for all $i \in I$ and $j \le j_0$,
see~\cite{Carlsson(1995)},~\cite{Carlsson-Pedersen(1995a)},~\cite{Winges(2013)}.

For $n \in \IZ$ and $j \in \{1,0,-1,-2,\ldots \} \amalg \{-\infty\}$,
the canonical maps given by the inclusions  induce isomorphisms
\begin{eqnarray}
\bigoplus _{i \in I} K_n(\cala_i) 
& \xrightarrow{\cong} &
K_n(\bigoplus_{i \in I} \cala_i);
\label{K-theory_and_infinite_sums}
\\
  \bigoplus _{i \in I} L_n^{\langle  j \rangle}(\cala_i)
  & \xrightarrow{\cong}  &
L_n^{\langle j \rangle} (\bigoplus _{i \in I}\cala_i).
\label{L-theory_and_infinite_sums}
\end{eqnarray}
This follows for finite $I$ from~\eqref{K-theory_and_infinite_products} and~\eqref{L-theory_and_infinite_products}
and for general $I$ from the fact that $K$-theory  and the $L$-theory with decoration $\langle j \rangle $ 
commute with colimits over directed systems of additive categories.

  
\subsection{Some basics about $L$-theory for rings with involution}%
\label{subsec:Some_basics_about_L-theory_for_rings_with_involution}

Let $R$ be a ring satisfying $\widetilde{K}_n(R) = 0$ for $n < 0$ and
$K_0(\Z) \xrightarrow{\cong} K_0(R)$, e.g., a principal ideal domain $R$.  Let $G$ be a
group.  Recall that $\widetilde{K}_n(R)$ is defined to be the cokernel of
$K_n(\IZ) \to K_n(R)$. Consider $S = RG$ equipped with the $w$-twisted involution for a fixed orientation
character $w\colon G \to \{\pm 1\}$. Define $L^{\langle 2 \rangle}(RG,w)$ to be the
$X$-decorated $n$-th $L$-group of $\calf(S)$, where $X$ is the image of the assembly map
$H_1(BG;\bfK(R)) \to K_1(RG)$.  We define $L^{\langle j \rangle}(RG,w)$ for
$j \in \{1,0, -1, \ldots \} \amalg \{-\infty\}$ by $L_n^{\langle j
  \rangle}(\calf(S))$. One can define decorated $L$-groups for arbitrary rings with
involutions.  However, we made the assumption on $R$ essentially in order to guarantee the
following facts.  We have
\begin{eqnarray*}
H_n(BG;\bfK(R)) & = & \{0\} \quad  \text{for} \; n \le -1;
\\
H_0(BG;\bfK(R)) & = &  K_0(R) \cong \IZ; 
\\
H_1(BG;\bfK(R)) &\cong & G/[G,G]  \times K_1(R),
\\
  \Wh_j(G;R) & = & K_j(RG)   \quad  \text{for} \; n \le -1;
\\
\Wh_0(G;R) & = & \widetilde{K}_0(RG)
\end{eqnarray*}
and a  split short exact sequence
\[
  0 \to H_1(BG;\bfK(R)) \to K_1(RG) \to \Wh_1(G;R) \to 0.
\]
There are  Rothenberg sequences for $j \in \{2,1,0,-1,\ldots \}$
\begin{multline}   \label{Rothenberg_sequences}
  \cdots \to L_n^{\langle j+1 \rangle}(RG,w) \to L_n^{\langle j \rangle}(RG,w) \to
  \widehat{H}^{n}(\IZ/2,\Wh_j(G;R))
  \\
  \to L_{n-1}^{\langle j +1 \rangle}(RG,w) \to L_{n-1}^{\langle j\rangle}(RG,w) \to
  \cdots.
\end{multline}
For a $\Z[\Z/2]$-module $A$ with $\Z/2$-action $a \mapsto \overline a$, we define the Tate cohomology
\[
\widehat H^n(\Z/2;A) = \frac{\{a \in A \mid \overline a = (-1)^n a\}}{\{a+ (-1)^n \overline a \mid a \in A\}}
\]
Moreover, the Shaneson splitting  gives  for $j \in \{2,1,0,-1,\ldots\} $ and $n \in \IZ$
\begin{equation}
L_n^{\langle j\rangle}(R[G \times \IZ], \pr^*w) \cong
 L_n^{\langle j\rangle}(RG, w) \oplus L_{n-1}^{\langle j-1 \rangle}(RG,w)
\label{Shaneson_splitting}
\end{equation}
for $\pr^*w  \colon G \times \IZ \xrightarrow{\pr} G \xrightarrow{w} \{\pm 1\}$.
One defines
\[
  L_n^{\langle -\infty \rangle}(RG,w) := \colim_{j \to -\infty} L_n^{\langle j\rangle}(RG,w).
\]
We have
\begin{equation}
L_n^{\langle -\infty \rangle}(R[G \times \IZ], \pr^*w) \cong
 L_n^{\langle -\infty \rangle}(RG, w) \oplus L_{n-1}^{\langle - \infty \rangle}(RG,w).
\label{Shaneson_splitting_infty}
\end{equation}

If $R = \IZ$, then $\Wh_1(G;R)$ is the classical Whitehead group $\Wh(G)$. Moreover,
$L^{\langle j \rangle}(\IZ G,w)$  agrees with the classical decorated $L$-groups
$L_n^s(\IZ G,w)$, $L_n^h(\IZ G,w)$, and $L_n^p(\IZ G,w)$ for $j = 2,1,0$.

\begin{remark}[Decorated $L$-theory is not compatible with finite products of rings]%
\label{rem:decorated_L-theory_is_not_compatible_with_finite_products_of_rings}
Note that decorated $L$-theory is compatible with finite products of rings only for $j = p$,
(or, equivalently, $j = 0$) but in general not for the other decorations. One can see the
problem for example for the decoration $j = h$, (or, equivalently, $j = 1$) from the
Rothenberg sequences, since the canonical map
$\widetilde{K}_0(S_1 \times S_2) \to \widetilde{K}_0(S_1) \times \widetilde{K}_0(S_2)$ for
two rings $S_1$ and $S_2$ is not bijective in general. All of this is due to the facts that
for two rings $S_1$ and $S_2$ the canonical functor
$\calp(S_1) \times \calp(S_2) \to \calp(S_1 \times S_2)$ is an equivalence of additive
categories, where $\calf(S_1) \times \calf(S_2) \to \calf(S_1 \times S_2)$
is \emph{not} an equivalence of additive categories,
since $S_1 \times \{0\}$ is not a free $S_1 \times S_2$-module.
\end{remark}

  
\subsection{A construction of  Ranicki}%
\label{subsec:A_construction_of_Ranicki}
We need the following result of Ranicki~\cite[Proposition~2.5.1 on page~166]{Ranicki(1981)},
which is stated there only for rings but carries over to additive categories with involutions.

\begin{theorem}\label{the:Ranicki's-result}
  Let $U \colon \cala \to \calb$ be a functor of additive categories with involution.
  Consider $j \in \{1,0,-1, \ldots \}$. 
  Then one can construct a commutative diagram with long exact rows and columns

\begin{small}
  \[\xymatrix@!C=26mm{\vdots \ar[d]
      &
      \vdots \ar[d]
      &
      \vdots \ar[d]
      &
      \vdots \ar[d]
      \\
      \cdots \to 
      L_n^{\langle j+1 \rangle}(\cala) \ar[r]^-{U_*} \ar[d]
      &
      L_n^{\langle j+1 \rangle}(\calb) \ar[r] \ar[d]
      &
      L_n^{\langle j+1 \rangle}(U) \ar[r] \ar[d]
      &
      L_{n-1}^{\langle j +1 \rangle}(\cala)  \xrightarrow{U_*}  \cdots   \ar[d]
      \\
      \cdots \to 
      L_n^{\langle j \rangle}(\cala) \ar[r]^-{U_*} \ar[d]
      &
      L_n^{\langle j \rangle}(\calb) \ar[r] \ar[d]
      &
      L_n^{\langle j \rangle}(U) \ar[r] \ar[d]
      &
      L_{n-1}^{\langle j \rangle}(\cala) \xrightarrow{U_*} \cdots \ar[d]
      \\
      \cdots \to 
      \widehat{H}^n(\IZ/2;\widetilde{K}_j(\cala)) \ar[r]^-{U_*} \ar[d]
      &
      \widehat{H}^n(\IZ/2;\widetilde{K}_j(\calb))  \ar[r] \ar[d]
      &
      \widehat{H}^n(\IZ/2;\widetilde{K}_j(U))  \ar[r] \ar[d]
      &
      \widehat{H}^{n-1}(\IZ/2;\widetilde{K}_j(\cala)) \xrightarrow{U_*} \cdots\ar[d]
      \\
      \cdots \to
      L_{n-1}^{\langle j+1 \rangle}(\cala) \ar[r]^-{U_*} \ar[d]
      &
      L_{n-1}^{\langle j+1 \rangle}(\calb) \ar[r] \ar[d]
      &
      L_{n-1}^{\langle j+1 \rangle}(U) \ar[r] \ar[d]
      &
      L_{n-2}^{\langle j +1 \rangle}(\cala) \xrightarrow{U_*} \cdots\ar[d]
      \\
      \vdots 
      &
      \vdots
      &
      \vdots 
      &
      \vdots 
    }
  \]
\end{small}
\end{theorem}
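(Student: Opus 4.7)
The plan is to realize the entire $3 \times 3$ diagram at the level of spectra and then pass to homotopy groups. For each $j \in \{1,0,-1,\ldots\}$ and each additive category with involution $\cala$, Ranicki constructs a quadratic $L$-theory spectrum $\bfL^{\langle j \rangle}(\cala)$ whose homotopy groups are $L_n^{\langle j \rangle}(\cala)$. The functor $U$ induces a map of spectra $U_* \colon \bfL^{\langle j \rangle}(\cala) \to \bfL^{\langle j \rangle}(\calb)$, and this construction is natural in both $\cala$ and the decoration.

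First, I would define the relative $L$-spectrum $\bfL^{\langle j \rangle}(U)$ to be the mapping cone of $U_*$, and set $L_n^{\langle j \rangle}(U) := \pi_n \bfL^{\langle j \rangle}(U)$. The cofibration sequence $\bfL^{\langle j \rangle}(\cala) \to \bfL^{\langle j \rangle}(\calb) \to \bfL^{\langle j \rangle}(U)$ then yields, upon taking homotopy groups, the long exact sequence in each of the first, second, and fourth rows of the display. The same holds for the $\widetilde K_j$-row once $\widetilde K_j(U)$ is defined as the mapping cone of $U_* \colon \bfK(\cala) \to \bfK(\calb)$ at the appropriate spectrum level and its Tate cohomology is formed termwise.

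Second, I would use the Rothenberg cofibration sequence of spectra
\[
  \bfL^{\langle j+1 \rangle}(\cala) \longrightarrow \bfL^{\langle j \rangle}(\cala) \longrightarrow \bfH(\IZ/2;\widetilde K_j(\cala)),
\]
where $\bfH(\IZ/2;-)$ denotes the Tate cohomology spectrum of a $\IZ/2$-module, the involution on $\widetilde K_j(\cala)$ coming from the duality on $\cala$. This cofibration is how Ranicki produces the classical Rothenberg sequence~\eqref{Rothenberg_sequences}; it is natural in $\cala$. Applying it to both $\cala$ and $\calb$ and to the cone of $U_*$ produces three horizontal cofibration sequences of spectra, linked by vertical maps induced by $U$.

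Third, naturality assembles this into a $3 \times 3$ grid of spectra in which every row and every column is a cofibration sequence. Passing to homotopy groups converts each cofibration sequence into a long exact sequence, and functoriality guarantees that every square in the resulting diagram commutes, giving the claimed picture.

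The main obstacle is establishing the Rothenberg cofibration sequence at the spectrum level, with the required naturality, for additive categories with involution rather than for rings. For ordinary rings this is the content of Ranicki's Proposition 2.5.1 in~\cite{Ranicki(1981)} together with his later identification of the cofiber of the change-of-decoration map with Tate cohomology. The passage to additive categories is essentially formal, because the ingredients (idempotent completion, torsion invariants of algebraic Poincar\'e complexes, forming Poincar\'e pairs) all live inside the additive-category framework used in~\cite[Sections~13, 17]{Ranicki(1992a)}. Thus one either invokes Ranicki's result as stated and checks that his construction uses nothing beyond the additive structure and involution, or one repeats his combinatorial argument with algebraic Poincar\'e cobordism classes whose torsion is controlled by the decoration $j$, defining $L_n^{\langle j \rangle}(U)$ as bordism classes of pairs $(C,D)$ with $C$ over $\cala$ and $D$ over $\calb$ mapped into $U(C)$, with the relevant torsion lying in the image of $\widetilde K_j(U)$.
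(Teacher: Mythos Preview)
The paper does not actually prove this theorem: immediately before the statement it says ``We need the following result of Ranicki~\cite[Proposition~2.5.1 on page~166]{Ranicki(1981)}, which is stated there only for rings but carries over to additive categories with involutions,'' and no proof environment follows. So the paper's ``proof'' is a citation together with the assertion that the ring case transfers verbatim to additive categories with involution.

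Your proposal is therefore strictly more than the paper offers, and your final paragraph lands on exactly the same conclusion the paper relies on. A couple of remarks on the sketch itself. First, Ranicki's original argument in~\cite{Ranicki(1981)} is not phrased in terms of cofibration sequences of spectra; it is a direct construction at the level of algebraic Poincar\'e bordism groups, building the relative $L$-groups $L_n^{\langle j \rangle}(U)$ from algebraic Poincar\'e pairs and verifying the two Rothenberg sequences and the relative long exact sequence compatibly. Your spectrum-level packaging is a legitimate alternative, but you should be aware that the existence of a natural Rothenberg \emph{cofibration sequence of spectra} (as opposed to the long exact sequence on homotopy groups) requires some care; the paper itself flags exactly this point later in Subsection~\ref{subsec:A_relative_theory_spectrum}, where it notes that the spectrum-level identification~\eqref{conjecture_for_L(j_plus_1,j)} is not trivially available and cites a separate master's thesis for it. Second, your treatment of the Tate row is a little loose: $\widetilde K_j(U)$ should be defined so that $\widetilde K_j(\cala) \to \widetilde K_j(\calb) \to \widetilde K_j(U)$ is part of a long exact sequence of $\IZ/2$-modules, and then the exactness of the Tate row follows from the long exact Tate sequence for that triangle, not from forming Tate cohomology ``termwise.'' None of this is a genuine gap, but if you want a self-contained argument, following Ranicki's chain-level construction directly (and observing that it only uses the additive structure and involution, as you note) is cleaner than routing through spectra.
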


  
\subsection{A relative $L$-theory spectrum}%
\label{subsec:A_relative_theory_spectrum}

Given $j \in \{2,1,0,,-1 \ldots\}$, 
define the $(\Groupoids \downarrow \{\pm 1 \})$-spectrum $\bfL_{R,w}^{\langle j+1,j \rangle}$
to be the cofiber of the map of $(\Groupoids \downarrow \{\pm 1 \})$-spectra $\bfL_{R,w}^{\langle j+1 \rangle} \to
\bfL_{R,w}^{\langle j \rangle}$. Then we get for any group with orientation character $(G,w)$ and any morphism
$f \colon X \to Y$ of $G$-$CW$-complexes 
a commutative diagram with long exact rows and columns
\begin{equation}
  \begin{small}
\xymatrix@!C=25mm{\vdots \ar[d]
      &
      \vdots \ar[d]
      &
      \vdots \ar[d]
      &
      \vdots \ar[d]
      \\
      \cdots \to
      H_n^G(X,\bfL_{R,w}^{\langle j+1 \rangle}) \ar[r]^-{f_*} \ar[d]
      &
      H_n^G(Y,\bfL_{R,w}^{\langle j+1 \rangle}) \ar[r] \ar[d]
      &
      H_n^G(f,\bfL_{R,w}^{\langle j+1 \rangle}) \ar[r] \ar[d]
      &
      H_{n-1}^G(X,\bfL_{R,w}^{\langle j+1 \rangle}) \xrightarrow{f_*} \cdots \ar[d]
     \\
      \cdots \to
      H_n^G(X,\bfL_{R,w}^{\langle j \rangle}) \ar[r]^-{f_*} \ar[d]
      &
      H_n^G(Y,\bfL_{R,w}^{\langle j  \rangle}) \ar[r] \ar[d]
      &
      H_n^G(f,\bfL_{R,w}^{\langle j \rangle}) \ar[r] \ar[d]
      &
      H_{n-1}^G(X,\bfL_{R,w}^{\langle j \rangle}) \xrightarrow{f_*} \ar[d] \cdots 
      \\
      \cdots \to
      H_n^G(X,\bfL_{R,w}^{\langle j+1,j \rangle}) \ar[r]^-{f_*} \ar[d]
      &
      H_n^G(Y,\bfL_{R,w}^{\langle j+1,j  \rangle}) \ar[r] \ar[d]
      &
      H_n^G(f,\bfL_{R,w}^{\langle j+1,j \rangle}) \ar[r] \ar[d]
      &
      H_{n-1}^G(X,\bfL_{R,w}^{\langle j+1,,j \rangle}) \xrightarrow{f_*} \cdots \ar[d]
      \\
      \cdots \to
      H_{n-1}^G(X,\bfL_{R,w}^{\langle j+1 \rangle}) \ar[r]^-{f_*} \ar[d]
      &
      H_{n-1}^G(Y,\bfL_{R,w}^{\langle j+1 \rangle}) \ar[r] \ar[d]
      &
      H_{n-1}^G(f,\bfL_{R,w}^{\langle j+1 \rangle}) \ar[r] \ar[d]
      &
      H_{n-2}^G(X,\bfL_{R,w}^{\langle j+1 \rangle}) \xrightarrow{f_*} \cdots \ar[d]
      \\
      \cdots \to
      H_{n-1}^G(X,\bfL_{R,w}^{\langle j \rangle}) \ar[r]^-{f_*} \ar[d]
      &
      H_{n-1}^G(Y,\bfL_{R,w}^{\langle j  \rangle}) \ar[r] \ar[d]
      &
      H_{n-1}^G(f,\bfL_{R,w}^{\langle j \rangle}) \ar[r] \ar[d]
      &
      H_{n-2}^G(X,\bfL_{R,w}^{\langle j \rangle}) \xrightarrow{f_*} \cdots \ar[d]
      \\
      \vdots 
      &
      \vdots
      &
      \vdots 
      &
      \vdots
    }
  \end{small}
  \label{diagram_for_(j_plus_1,j)}
\end{equation}

In particular we get long exact sequences
\begin{multline}
\cdots \to H_n^G(\pt;\bfL_{R,w}^{\langle j+1 \rangle}) = L_n^{\langle j+1 \rangle}(RG,w)
  \to H_n^G(\pt;\bfL_{R,w}^{\langle j \rangle}) = L_n^{\langle j \rangle}(RG,w)
  \\
  \to H_n^G(\pt;\bfL_{R,w}^{\langle j+1, j \rangle}) = \pi_n(\bfL_{R,w}^{\langle j+1, j \rangle}(\wh G))
   \\
   \to H_{n-1}^G(\pt;\bfL_{R,w}^{\langle j+1 \rangle}) = L_{n-1}^{\langle j+1 \rangle}(RG,w)
   \to H_{n-1}^G(\pt;\bfL_{R,w}^{\langle j \rangle}) = L_{n-1}^{\langle j \rangle}(RG,w)
   \to \cdots,
   \label{Rothenberg_for_relative-spectrum}
 \end{multline}
 where $\wh G$ is the one-object groupoid associated to $G$.
 In view of the Rothenberg sequence~\eqref{Rothenberg_sequences}
 this leads to the very reasonable conjecture
 that there is a natural identification\footnote{Christian Kremer gives a proof of
   this equality~\eqref{conjecture_for_L(j_plus_1,j)} in his master thesis~\cite{Kremer(2022)}.}
 \begin{equation}
   H_n^G(\pt;\bfL_{R,w}^{\langle j+1,j \rangle}) = \widehat{H}^n(\IZ/2;\Wh_j(G;R)).
   \label{conjecture_for_L(j_plus_1,j)}
 \end{equation}
 If we would know this claim, this would make the exposition easier and more transparent.
 Actually, this claim will be proven
 in~\cite{Calmes-Dotto-Harpaz-Hebestreit-Land-Moi-Nardin-Nikolaus-Steimle(2024IV)}, where also
 complete constructions of the spectra $\bfL_{R,w}^{\langle j \rangle}$ will be presented and the spectra
 $\bfL_{R,w}^{\langle j+1,j \rangle}$ will be identified with the corresponding Tate spectra.
 (This is private communication with Markus Land.)
 
 Instead of using the unpublished work above, we take  a shortcut based on Theorem~\ref{the:Ranicki's-result}.
 The same attitude is taken in the proof of~\cite[Corollary~5.6]{Connolly-Davis-Khan(2015)}.
 There only rings are considered, which is problematic in view of the failure of
 decorated $L$-groups to be compatible with finite products of rings, see
 Remark~\ref{rem:decorated_L-theory_is_not_compatible_with_finite_products_of_rings}.
 We want to explain here that this problem can be solved by passing to additive categories
 with involution as explained in
 Subsection~\ref{subsec:Some_basics_about_K-and_L-theory_for_additive_categories_with_involution}.

 \begin{lemma}\label{lem:results_about_relative-spectrum}\
   \begin{enumerate}

   \item\label{lem:results_about_relative-spectrum_vanishing}
     The following assertions are equivalent for $j \in \{1,0,-1,-2, \ldots\}$:

     \begin{enumerate}
     \item  The abelian group $\widehat{H}^n(\IZ/2;\Wh_j(G;R))$ vanishes for all $n \in \IZ$;
     \item  The abelian group $H_n^G(\pt;\bfL_{R,w}^{\langle j +1, j\rangle})  \cong \pi_n(\bfL_{R,w}^{\langle j+1,j \rangle}(\wh G))$
       vanishes for all $n \in \IZ$;
      \end{enumerate}
    \item\label{lem:results_about_relative-spectrum:free_actions}
        If $X$ is a free $G$-$CW$-complex, then $H_n^G(X;\bfL_{R,w}^{\langle j+1,j \rangle}) = 0$ holds for all $n \in \IZ$.
      \end{enumerate}
    \end{lemma}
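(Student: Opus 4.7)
The plan for part~(1) is to compare two long exact sequences that share the same arrows between consecutive decorated $L$-groups: namely the cofiber-induced sequence \eqref{Rothenberg_for_relative-spectrum}, whose relative term is $\pi_n(\bfL_R^{\langle j+1,j\rangle}(\widehat{G})) = H_n^G(\pt;\bfL_R^{\langle j+1,j\rangle})$, and the algebraic Rothenberg sequence \eqref{Rothenberg_sequences}, whose relative term is $\widehat{H}^n(\IZ/2;\Wh_j(G;R))$. The key point is that the $L$-group arrow appearing in both sequences is the common change-of-decoration map $L_n^{\langle j+1\rangle}(RG,w) \to L_n^{\langle j\rangle}(RG,w)$; this follows by construction of the Davis--L\"uck $\Groupoids$-spectra $\bfL_R^{\langle j\rangle}$, which are designed so that the spectrum-level map $\bfL_R^{\langle j+1\rangle}\to \bfL_R^{\langle j\rangle}$ induces precisely the algebraic forgetful map on $\pi_n$. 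Given this, elementary diagram chasing shows that each of the two relative terms vanishes for every $n\in\IZ$ exactly when the common map is an isomorphism for every $n$, so conditions~(a) and~(b) are equivalent.

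For part~(2), I would specialize part~(1) to the trivial group $G=\{1\}$. Since $E\{1\}=\pt$, the defining long exact sequence for $\Wh_j$ collapses and yields $\Wh_j(\{1\};R)=0$ for every $j$. Tate cohomology of the zero group vanishes, so part~(1) forces $\pi_n(\bfL_R^{\langle j+1,j\rangle}(\widehat{1}))=0$ for every $n$; equivalently, the spectrum $\bfL_R^{\langle j+1,j\rangle}(\widehat{1})$ is weakly contractible. If $X$ is a free $G$-CW-complex, the induction isomorphism associated to the epimorphism $\alpha\colon G\to \{1\}$---valid because its kernel $G$ acts freely on $X$---identifies $H_n^G(X;\bfL_R^{\langle j+1,j\rangle})$ with the non-equivariant generalized homology $H_n(X/G;\bfL_R^{\langle j+1,j\rangle}(\widehat{1}))$. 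Since the coefficient spectrum is contractible, this homology vanishes in every degree.

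The main obstacle is the identification of the two $L$-group arrows in part~(1). The paper explicitly flags that it cannot directly identify the two relative terms themselves---the conjectured isomorphism \eqref{conjecture_for_L(j_plus_1,j)} is unproven---but what the comparison of vanishing conditions needs is strictly weaker: merely that the arrow $L_n^{\langle j+1\rangle}(RG,w) \to L_n^{\langle j\rangle}(RG,w)$ defined abstractly via Ranicki coincides with the one coming from the spectrum map. Theorem~\ref{the:Ranicki's-result}, applied with $\cala=\calb=\calf(RG)$ together with the natural functor $\calf(RG)\to \calp(RG)$ relevant to the decoration change, furnishes the framework for this bookkeeping without requiring the full identification of the relative terms. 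Once that naturality is in hand, the rest of the argument is purely formal manipulation of long exact sequences, exactly the \emph{shortcut} the authors refer to in the discussion preceding the lemma.
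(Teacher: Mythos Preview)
Your argument for part~(1) is exactly the paper's: both the Rothenberg sequence~\eqref{Rothenberg_sequences} and the cofiber sequence~\eqref{Rothenberg_for_relative-spectrum} contain the same change-of-decoration map $L_n^{\langle j+1\rangle}(RG,w)\to L_n^{\langle j\rangle}(RG,w)$, so each relative term vanishes for all $n$ if and only if this map is bijective for all $n$. Your closing paragraph about Theorem~\ref{the:Ranicki's-result} is a digression you do not need here: the identification of the two arrows is immediate from the construction of the $\Groupoids$-spectra $\bfL_R^{\langle j\rangle}$, not something requiring Ranicki's framework. Ranicki's result is invoked later in the paper (in the proof of Theorem~\ref{the:computing_L-groups}) for a genuinely harder comparison involving a nontrivial functor $U$, but for this lemma the two sequences share their $L$-group maps by definition.

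For part~(2) your route diverges slightly from the paper's. You use the induction isomorphism along $G\to\{1\}$ to reduce to non-equivariant homology with coefficients in the weakly contractible spectrum $\bfL_R^{\langle j+1,j\rangle}(\widehat{1})$; the paper instead observes $H_n^G(G/\{1\};\bfL_R^{\langle j+1,j\rangle})=0$ and then runs the equivariant Atiyah--Hirzebruch spectral sequence. Both arguments are correct and roughly equivalent in difficulty; yours is perhaps marginally cleaner since it avoids any spectral-sequence bookkeeping.
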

    \begin{proof}~\eqref{lem:results_about_relative-spectrum_vanishing}
      This follows from exact sequences~\eqref{Rothenberg_sequences} and~\eqref{Rothenberg_for_relative-spectrum},
      since both statements are equivalent to the assertion that the map $L_n^{\langle j+1 \rangle}(RG,w) \to L_n^{\langle j \rangle}(RG,w)$
      is bijective for all $n \in \IZ$.
      \\[1mm]~\eqref{lem:results_about_relative-spectrum:free_actions}
      We have $\Wh_j(\{1\},R) = 0 $ for $j \le 1$. Hence $\widehat{H}^n(\IZ/2;\Wh_j(\{1\};R)) = 0$ for all $n \in \IZ$.
      By assertion~\eqref{lem:results_about_relative-spectrum_vanishing} we get
      $H_n^G(G/\{1\};\bfL_{R,w}^{\langle j+1,j \rangle}) \cong \pi_n(\bfL_{R,w}^{\langle j +1,j\rangle}(\{1\})) = 0$
    for all $n \in \IZ$. This implies by the equivariant Atiyah-Hirzebruch spectral sequence~\cite[Theorem 4.7]{Davis-Lueck(1998)}
    that $H_n^G(X;\bfL_{R,w}^{\langle j+1,j \rangle}) = 0$ holds for all $n \in \IZ$, if $X$ is a free $G$-$CW$-complex.
  \end{proof}

  
\subsection{Computing $L$-groups}\label{subsec:Computing_L_groups}

\begin{theorem}\label{the:computing_L-groups}
  Let $R$ be a ring with involution. Let $\Gamma$ be a group coming with a group
  homomorphism $w \colon \Gamma \to \{ \pm 1\}$. 
  Let $\calm$ be a complete system of representatives of the conjugacy classes of maximal
  finite subgroups and let $\calvII$ be a complete system of representatives of the
  conjugacy classes of maximal virtually cyclic subgroups of type {II}.
  Suppose that the following conditions are satisfied:

    \begin{itemize}
  \item The group $\Gamma$ satisfies conditions (M), (NM), and (V$_{\operatorname{II}}$), see
    Definition~\ref{def:conditions_on_Gamma_intro};
  \item The group $\Gamma$ is  a Farrell-Jones group;
  \item There exists $j_0 \in \IZ$  such that $\Wh_j(H;R) = 0$ holds for every finite
    subgroup $H \subseteq \Gamma$ and every $j \le j_0$;
  \item The ring $R$ is regular, $K_n(R) = 0$ for $n < 0$, and $K_0(\Z) \xrightarrow{\cong} K_0(R)$,
    e.g., $R$ is a principal ideal domain;
  \end{itemize}
  
    Consider any $j \in \{2,1,0, -1, \ldots\} \amalg \{-\infty \}$. Then:

  \begin{enumerate}
  \item\label{the:computing_L-groups:edub_pt}
   The map induced by the projection $\edub{\Gamma} \to \pt$ induces   an isomorphism for every $n \in \IZ$
    \[H_n^{\Gamma}(\edub{\Gamma};\bfL_{R,w}^{\langle j \rangle})
    \xrightarrow{\cong}  
    H_n^{\Gamma}(\pt;\bfL_{R,w}^{\langle j \rangle}) = L_n^{\langle j \rangle}(R\Gamma,w);
  \]
 
\item\label{the:computing_L-groups:eub(Gamma)_and_edub(Gamma)}
  For every $n \in \IZ$ we have the short split exact sequence
  \[0 \to H_n^{\Gamma}(\eub{\Gamma} ;\bfL_{R,w}^{\langle -\infty \rangle}) \to 
    H_n^{\Gamma}(\edub{\Gamma} ;\bfL_{R,w}^{\langle -\infty  \rangle}) \to
    H_n^{\Gamma}(\eub{\Gamma} \to \edub{\Gamma} ;\bfL_{R,w}^{\langle -\infty \rangle}) \to
    0;
  \]

\item\label{the:computing_L-groups:eub(Gamma)_relative_terms_as_bigoplus}
We obtain  an isomorphism for any $n \in \IZ$
  \begin{eqnarray*}
   \bigoplus_{F \in \calm} H_n^{F}(EF\to \pt;\bfL_{R,w|_F}^{\langle j \rangle})
   & \xrightarrow{\cong} &
   H_n^{\Gamma}(E\Gamma \to \eub{\Gamma} ;\bfL_{R,w}^{\langle j \rangle});
    \\
    \bigoplus_{V \in \calvII} H_n^{V}(\eub{V} \to \pt;\bfL_{R,w|_V}^{\langle j \rangle})
    & \xrightarrow{\cong} &
                            H_n^{\Gamma}(\eub{\Gamma} \to \edub{\Gamma};\bfL_{R,w}^{\langle j \rangle});
  \end{eqnarray*}
For every $V \in \calvII$ and $n \in \IZ$  the canonical map
  \[H_n^{V}(\eub{V} \to \pt;\bfL_{R,w|_V}^{\langle j \rangle}) \xrightarrow{\cong}
    H_n^{V}(\eub{V} \to \pt;\bfL_{R,w|_V}^{\langle -\infty\rangle})
  \]
  is bijective. 
\end{enumerate}
\end{theorem}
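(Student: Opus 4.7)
The plan is to prove (1), (2), (3) in a coordinated way: bootstrap from the Farrell--Jones isomorphism at decoration $\langle -\infty \rangle$ up through the decoration ladder to obtain (1), then exploit the cellular pushouts of Theorem~\ref{the:constructing_underline(E)Gamma)} together with the Transitivity Principle to reduce (2) and (3) to computations on finite and on type~II virtually cyclic subgroups.

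First I would establish (1) for $j = -\infty$ directly: since $\Gamma$ is a Farrell--Jones group, the projection $\edub{\Gamma} \to \pt$ induces an $\bfL^{\langle -\infty \rangle}_{R,w}$-assembly isomorphism. To extend this to every $j \in \{2,1,0,-1,\ldots\}$, I would induct downward in $j$ using the cofiber sequence of $\Groupoids$-spectra $\bfL^{\langle j+1 \rangle}_R \to \bfL^{\langle j \rangle}_R \to \bfL^{\langle j+1,j \rangle}_R$ from Subsection~\ref{subsec:A_relative_theory_spectrum}. Comparing the resulting long exact sequence~\eqref{Rothenberg_for_relative-spectrum} at $\edub{\Gamma}$ and at $\pt$ via the five-lemma reduces the inductive step to showing that the assembly map
\[
H_n^\Gamma(\edub{\Gamma}; \bfL^{\langle j+1,j\rangle}_R) \;\longrightarrow\; H_n^\Gamma(\pt; \bfL^{\langle j+1,j\rangle}_R)
\]
is an isomorphism. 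Applying the pushout of Theorem~\ref{the:constructing_underline(E)Gamma)}~\eqref{the:constructing_underline(E)Gamma):Fin} and Lemma~\ref{lem:results_about_relative-spectrum}~\eqref{lem:results_about_relative-spectrum:free_actions} (so $E\Gamma$ contributes trivially to the relative spectrum), this in turn reduces to comparing $\bigoplus_{F \in \calm} H_n^F(EF \to \pt; \bfL^{\langle j+1,j \rangle}_R)$ with the local term at each $F$ and at $\Gamma$. Here I would invoke Ranicki's ladder (Theorem~\ref{the:Ranicki's-result}) applied to the inclusion of additive categories with involution $\bigoplus_{F \in \calm} \calf(RF) \hookrightarrow \calf(R\Gamma)$ and combine it with the $K$-theoretic identification $\bigoplus_{F \in \calm} \Wh_j(F;R) \xrightarrow{\cong} \Wh_j(\Gamma;R)$ from Theorem~\ref{the:computing_K-groups}; a diagram chase in Ranicki's ladder then gives the required isomorphism. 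The main obstacle will be matching the Tate-cohomology columns of Ranicki's ladder with the homology-theoretic ladder~\eqref{diagram_for_(j_plus_1,j)} without appealing to the unproven identification~\eqref{conjecture_for_L(j_plus_1,j)}. The trick is to stay at the level of additive categories with involution (see Remark~\ref{rem:decorated_L-theory_is_not_compatible_with_finite_products_of_rings}), since the summands on the left do not assemble into a single ring $RH$, and to exploit compatibility of $K$- and $L$-theory with finite direct sums~\eqref{K-theory_and_infinite_sums}, \eqref{L-theory_and_infinite_sums}.

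For assertion~(3), the first isomorphism follows from excision applied to the pushout of Theorem~\ref{the:constructing_underline(E)Gamma)}~\eqref{the:constructing_underline(E)Gamma):Fin}, since $E\Gamma$ is free. For the second, I would first invoke the Transitivity Principle: by Lemma~\ref{lem:consequences_of_the_conditions_(M),(NM)_(F)}~\eqref{lem:consequences_of_the_conditions_(M),(NM)_(F):type_I} every infinite type~I virtually cyclic subgroup of $\Gamma$ is infinite cyclic, and regularity of $R$ together with the Shaneson splitting~\eqref{Shaneson_splitting_infty} shows that the $\bfL^{\langle -\infty\rangle}_R$-assembly map is an isomorphism on such subgroups; hence $H_n^\Gamma(\eub{\Gamma}; \bfL^{\langle -\infty \rangle}_{R,w}) \xrightarrow{\cong} H_n^\Gamma(\EGF{\Gamma}{\mathcal{VCYC}_I}; \bfL^{\langle -\infty \rangle}_{R,w})$. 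Excision applied to the pushout of Theorem~\ref{the:constructing_underline(E)Gamma)}~\eqref{the:constructing_underline(E)Gamma):Vcyc_II} then identifies the relative term as $\bigoplus_{V \in \calvII} H_n^V(\eub{V} \to \pt;\bfL^{\langle -\infty\rangle}_{R,w|_V})$. For the decoration-independence statement, since $V \cong D_\infty$ by Lemma~\ref{lem:consequences_of_the_conditions_(M),(NM)_(F)}~\eqref{lem:consequences_of_the_conditions_(M),(NM)_(F):type_II}, one runs the same upward induction through Ranicki's ladder: the relevant relative Tate-cohomology terms vanish on the pair $(\eub V, \pt)$, because the Whitehead groups of the infinite cyclic subgroup and the two $\IZ/2$ isotropy pieces contribute via the pushout describing $\eub{D_\infty}$ in a way that cancels modulo the colimit defining $\langle -\infty \rangle$.

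Finally, for~(2), I would derive the short exact sequence from the long exact sequence of the pair $(\eub{\Gamma}, \edub{\Gamma})$ in $H^\Gamma_*(-;\bfL^{\langle -\infty\rangle}_{R,w})$. To exhibit the splitting, I would use that by (3) the relative term is $\bigoplus_{V \in \calvII} H_n^V(\eub V \to \pt; \bfL^{\langle -\infty \rangle}_{R,w|_V})$, and these summands are naturally the Cappell $\UNil$-groups of $D_\infty$ (Remark~\ref{rem:Identification_with_UNil-groups}), which are canonical direct summands of $L^{\langle -\infty \rangle}_n(\IZ D_\infty; w|_V)$; assembling these retractions $\Gamma$-equivariantly over $V \in \calvII$ yields a splitting of the long exact sequence into short exact pieces, as required.
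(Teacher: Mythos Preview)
Your proposal follows essentially the same route as the paper, with the same key ingredients: the pushouts of Theorem~\ref{the:constructing_underline(E)Gamma)}, Lemma~\ref{lem:results_about_relative-spectrum}, Ranicki's ladder applied to the additive-category functor $\bigoplus_{F \in \calm} \calf(RF) \to \calf(R\Gamma)$, and the $K$-theoretic input from Theorem~\ref{the:computing_K-groups}. Two points deserve sharpening.

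First, your induction in~(1) is mis-oriented. You cannot ``induct downward'' from $j=-\infty$, since that is a colimit rather than a finite starting value. The paper's argument is two-stage: it first establishes the isomorphism for \emph{all} $j \le j_0$ simultaneously, by observing that for every virtually cyclic $W \subset \Gamma$ (necessarily $\IZ$ or $D_\infty$ by Lemma~\ref{lem:consequences_of_the_conditions_(M),(NM)_(F)}) one has $\Wh_j(W;R)=0$ for $j \le j_0$, so that the Rothenberg sequences force $\bfL^{\langle j\rangle}_R(\Gamma/W) \to \bfL^{\langle -\infty\rangle}_R(\Gamma/W)$ to be a weak equivalence, and hence the assembly map for decoration $j \le j_0$ coincides with the one for $-\infty$. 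Only then does one induct \emph{upward}, $j \Rightarrow j+1$, for the finitely many remaining steps up to $j=2$. Your cofiber-sequence step and the Ranicki-ladder argument are exactly what is needed for this upward passage; you just need the base case in place.

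Second, for the splitting in~(2) the paper does not assemble $\UNil$-retractions by hand. Instead it verifies that $K_j(RW)=0$ for $j \le j_0$ and every virtually cyclic $W$ (using $R$ regular, so $K_j(R)=0$ for $j<0$, and $\Wh_j(W;R)=0$), and then invokes Bartels' split-injectivity theorem \cite[Section~1]{Bartels(2003b)} directly. Your description of assembling the summand-retractions $\Gamma$-equivariantly is precisely the content of Bartels' argument, but making it rigorous requires checking that the induction maps $H_n^V(\pt) \to H_n^\Gamma(\edub\Gamma)$ compose correctly with the projection to the relative term; citing Bartels is the cleaner route.
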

\begin{proof}~\eqref{the:computing_L-groups:edub_pt}   
     Since $\Gamma$ is a Farrell-Jones group,
  the map
   \[H_n^{\Gamma}(\edub{\Gamma};\bfL_{R,w}^{\langle -\infty \rangle})
    \xrightarrow{\cong}  
    H_n^{\Gamma}(\pt;\bfL_{R,w}^{\langle -\infty \rangle}) = L^{\langle - \infty \rangle}_n(R\Gamma,w)
  \]
  is bijective for all $n \in \IZ$.  This takes care of the case $j = -\infty$.

  We can assume without loss of generality that $j_0 \le -1$, otherwise replace $j_0$ by
  $-1$.  Next we prove assertions~\eqref{the:computing_L-groups:edub_pt} for
  $j \in \{j_0,j_0-1, j_0 -2, \ldots\}$.

  By assumption $\Wh_j(H;R)$ vanishes for every  $j \le j_0$ and every finite subgroup
  $H \subseteq \Gamma$.  We conclude from
  Lemma~\ref{lem:consequences_of_the_conditions_(M),(NM)_(F)} that every infinite
  virtually cyclic subgroup of $\Gamma$ is either infinite cyclic or isomorphic to
  $D_{\infty}$.  If we take $\Gamma = \IZ$ or $D_{\infty}$, the assumptions of
  Theorem~\ref{the:computing_K-groups} are satisfied and therefore $\Wh_j(W;R)$ vanishes
  for every  $j \le j_0$ and every  virtually cyclic subgroup $W$ of $\Gamma$.
  We conclude from the Rothenberg sequence~\eqref{Rothenberg_sequences}  that  the
  canonical map $L_n^{\langle j+1 \rangle}(RW,w|_W) \to L_n^{\langle j \rangle}(RW,w|_W)$
  is bijective for every $n \in \IZ$, $j \le j_0$ and every  virtually cyclic subgroup $W$ of
  $\Gamma$.  Hence    the canonical map of spectra
  $\bfL_{R,w}^{\langle j \rangle}(\Gamma/W) \to \bfL_{R,w}^{\langle
    -\infty\rangle}(\Gamma/W)$ is a weak homotopy equivalence for all virtually cyclic
  subgroups $W$ of $\Gamma$ and all $j \le j_0$.  Since all isotropy groups of
  $\edub{\Gamma}$ are virtually cyclic, the canonical map
  \[
    H_n^{\Gamma}(\edub{\Gamma},\bfL_{R,w}^{\langle j \rangle}) \xrightarrow{\cong}
    H_n^{\Gamma}(\edub{\Gamma},\bfL_{R,w}^{\langle -\infty \rangle})
  \]
  is bijective for $n \in \IZ$ and $j \le j_0$. Since $R$ is regular by assumption and
  hence $\Wh_j(\Gamma;R)$ vanishes for $j \le j_0$ by
  Theorem~\ref{the:computing_K-groups}, the canonical map
  \[
    H_n^{\Gamma}(\pt,\bfL_{R,w}^{\langle j \rangle}) = L_n^{\langle j \rangle}(R\Gamma,w)
    \xrightarrow{\cong} H_n^{\Gamma}(\pt,\bfL_{R,w}^{\langle -\infty \rangle}) = L_n^{\langle
      -\infty \rangle}(R\Gamma,w)
  \]
  is bijective for $n \in \IZ$ by the Rothenberg sequence~\eqref{Rothenberg_sequences}.
  We conclude that assertion~\eqref{the:computing_L-groups:edub_pt} holds for
  $j \in \{j_0,j_0-1, j_0 -2, \ldots\}$, since we have already proved it for
  $j = - \infty$.

It remains to show for $j \in \IZ$ with $j \le 1$ that
assertion~\eqref{the:computing_L-groups:edub_pt}
holds for $j+1$, if it  holds for $j$. This is done as follows.

We get from excision  and 
Theorem~\ref{the:constructing_underline(E)Gamma)}~\eqref{the:constructing_underline(E)Gamma):Fin}
the isomorphism
\[
\bigoplus_{F \in \calm} H_n^{\Gamma}(\Gamma \times_F EF \to \Gamma/F,\bfL_{R,w|_F}^{\langle j+1,j \rangle})
\xrightarrow{\cong}
  H_n^{\Gamma}(E\Gamma \to \eub{\Gamma},\bfL_{R,w|_F}^{\langle j+1,j \rangle}).
\]
The groups $H_n^{\Gamma}(E\Gamma,\bfL_{R,w|_F}^{\langle j+1,j \rangle})$
and $H_n^{\Gamma}(\Gamma \times_F EF,\bfL_{R,w|_F}^{\langle j+1,j \rangle})$ vanish
for $n \in \IZ$ by
Lemma~\ref{lem:results_about_relative-spectrum}~\eqref{lem:results_about_relative-spectrum:free_actions},
as  $\Gamma$ acts freely on $E\Gamma$ and on $\Gamma \times_F EF$.
This implies that the canonical maps
\[
H_n^{\Gamma}(\Gamma/F,\bfL_{R,w}^{\langle j+1,j \rangle}) \xrightarrow{\cong}
H_n^{\Gamma}(\Gamma \times_F   EF \to  \Gamma/F,\bfL_{R,w}^{\langle j+1,j \rangle})
\]
and
\[
H_n^{\Gamma}(\eub{\Gamma},\bfL_{R,w}^{\langle j+1,j \rangle})
\xrightarrow{\cong} 
H_n^{\Gamma}(E\Gamma \to \eub{\Gamma},\bfL_{R,w}^{\langle j+1,j \rangle})
\]
are isomorphisms. Hence we get isomorphisms
\[
\bigoplus_{F \in \calm} H_n^{\Gamma}(\Gamma/F,\bfL_{R,w}^{\langle j+1,j \rangle})
\xrightarrow{\cong}
  H_n^{\Gamma}(\eub{\Gamma},\bfL_{R,w}^{\langle j+1,j \rangle}).
\]
This shows that the up to $\Gamma$-homotopy unique map $f \colon \coprod_{F \in \calm} \Gamma/F \to \eub{\Gamma}$
induces for $n \in \IZ$ an isomorphism
\begin{equation}
  H_n^{\Gamma}(f;\bfL_{R,w}^{\langle j+1,j \rangle}) \colon
  H_n^{\Gamma}( \coprod_{F \in \calm} \Gamma/F;\bfL_{R,w}^{\langle j+1,j \rangle}) \xrightarrow{\cong}
 H_n^{\Gamma}(\eub{\Gamma};\bfL_{R,w}^{\langle j+1,j \rangle}).
 \label{f_for_j_plus_1,j_is_iso}
\end{equation}
Let $p \colon \coprod_{F \in \calm} \Gamma/F \to \pt$ be the projection. Next we want to show that
it induces for all $n \in \IZ$ an isomorphism
\begin{equation}
  p_n \colon 
  H_n^{\Gamma}( \coprod_{F \in \calm} \Gamma/F;\bfL_{R,w|_F}^{\langle j+1,j \rangle}) \xrightarrow{\cong}
 H_n^{\Gamma}(\pt;\bfL_{R,w}^{\langle j+1,j \rangle}).
 \label{p_for_j_plus_1,j_is_iso}
\end{equation}

Put $X = \coprod_{F \in \calm} \Gamma/F$. The following commutative diagram with exact rows
\begin{small}
\[
\xymatrix@!C=22mm{\cdots \to 
      H_n^{\Gamma}(X;\bfL^{\langle j+1 \rangle}_{R,w})
      \ar[r]^-{p_*} \ar[d]
      &
      H_n^{\Gamma}(\pt;\bfL^{\langle j+1 \rangle}_{R,w}) \ar[r] \ar[d]
      &
      H_n^{\Gamma}(p;\bfL^{\langle j+1 \rangle}_{R,w})\ar[r] \ar[d]
      &
      H_{n-1}^{\Gamma}(X;\bfL^{\langle j+1 \rangle}_{R,w}) \xrightarrow{p_*} \cdots \ar[d]
      \\
      \cdots \to 
      H_{n}^{\Gamma}(X;\bfL^{\langle j \rangle}_{R,w})
      \ar[r]^-{p_*} 
      &
      H_{n}^{\Gamma}(\pt;\bfL^{\langle j \rangle}_{R,w}) \ar[r] 
      &
      H_{n}^{\Gamma}(p;\bfL^{\langle j \rangle }_{R,w})\ar[r] 
      &
      H_{n-1}^{\Gamma}(X;\bfL^{\langle j \rangle}_{R,w}) \xrightarrow{p_*} \cdots
      }
    \]
  \end{small}%
can be identified with the first two rows in the commutative diagram with exact rows
    and columns of Theorem~\ref{the:Ranicki's-result}, if we take
    $\cala = \bigoplus _{F \in \calm} \calf(RF)$, $\calb = \calf(R\Gamma)$, and
    $U \colon \cala \to \calb$ to be
    $\bigoplus_{F \in \calm} \calf(i_F) \colon \bigoplus_{F\in \calm} \calf(RF) \to
    \calf(R\Gamma)$ for $i_F \colon RF \to R\Gamma$ the ring homomorphism induced by
    the inclusion $F \to \Gamma$.   This
    identification uses~\eqref{L-theory_and_infinite_sums}. The third row in the
    commutative diagram with exact rows and columns of Theorem~\ref{the:Ranicki's-result}
    can be written as
    \begin{multline*}
      \cdot \to \widehat{H}^n(\IZ/2;\bigoplus_{F \in \calm}\Wh_j(F;R))
      \to \widehat{H}^n(\IZ/2; \Wh_j(\Gamma;R))
       \to \widehat{H}^n(\IZ/2;U)
       \\
       \to
        \widehat{H}^{n-1}(\IZ/2;\bigoplus_{F \in \calm}\Wh_j(F;R))
      \to \widehat{H}^{n-1}(\IZ/2;\Wh_j(\Gamma;R))
      \to \cdots.
    \end{multline*}
    This identification uses~\eqref{K-theory_and_infinite_sums}.  Since
    $\bigoplus_{F \in \calm}\Wh_j(F;R) \to \Wh_j(\Gamma;R)$ is an isomorphism for all
    $n \in \IZ$ by Theorem~\ref{the:computing_K-groups}, the first and the third arrow
    appearing in the long exact sequence above is bijective for all $n \in \IZ$. Hence
    $\widehat{H}^n(\IZ/2;U)$ vanishes for all $n \in \IZ$.  We conclude that  the map
    $H_n^{\Gamma}(p;\bfL_{R,w}^{\langle j+1 \rangle}) \xrightarrow{\cong}
    H_n^{\Gamma}(p;\bfL_{R,w}^{\langle j \rangle})$ is an isomorphism for all $n \in \IZ$.  This
    implies that $H_n^{\Gamma}(p;\bfL_{R,w}^{\langle j+1,j \rangle})$ vanishes for all
    $n \in \IZ$.  Hence~\eqref{p_for_j_plus_1,j_is_iso} is an isomorphism.  Note that the
    proof of the bijectivity of~\eqref{p_for_j_plus_1,j_is_iso} would be rather easy, if we
    would know~\eqref{conjecture_for_L(j_plus_1,j)}.

    We conclude from~\eqref{f_for_j_plus_1,j_is_iso} and~\eqref{p_for_j_plus_1,j_is_iso}
    that the map
    \begin{equation}
    H_n^{\Gamma}(\eub{\Gamma};\bfL_{R,w}^{\langle j+1,j \rangle}) \xrightarrow{\cong}
 H_n^{\Gamma}(\pt;\bfL_{R,w}^{\langle j+1,j \rangle})
 \label{eub_to_pt_for_j_plus_1,j_is_iso}
\end{equation}
is bijective for all $n \in \IZ$.

Next we show
   \begin{equation}
     H_n^{V}(\eub{V} \to \pt;\bfL_{R,w|_V}^{\langle j+1,j \rangle}) = 0\quad 
     \text{for}\; n \in \IZ, V \in \calv.
     \label{Vanishing_of_H_n_upper_V(eub(V)_to_pt;bfL_(R,w)_upper_(langle_j_plus_1,j_rangle))}
   \end{equation}
   Recall from Lemma~\ref{lem:consequences_of_the_conditions_(M),(NM)_(F)}
   that any virtually cyclic subgroup of $\Gamma$ is infinite
   cyclic or isomorphic to $D_{\infty}$.  Hence $V$ satisfies the assumptions of
   Theorem~\ref{the:computing_L-groups}, if we take $\Gamma = V$.
   Now~\eqref{Vanishing_of_H_n_upper_V(eub(V)_to_pt;bfL_(R,w)_upper_(langle_j_plus_1,j_rangle))}
   follows from  the isomorphism~\eqref{eub_to_pt_for_j_plus_1,j_is_iso}, which we have already established.

   Next we show that
    \begin{equation}
    H_n^{\Gamma}(\eub{\Gamma};\bfL_{R,w}^{\langle j+1,j \rangle}) \xrightarrow{\cong}
 H_n^{\Gamma}(\edub{\Gamma};\bfL_{R,w}^{\langle j+1,j \rangle})
 \label{eub__edub_for_j_plus_1,j_is_iso}
\end{equation}
is an isomorphism for all $n \in \IZ$. We get from
Theorem~\ref{the:constructing_underline(E)Gamma)}~\eqref{the:constructing_underline(E)Gamma):Vcyc}
an isomorphism
\[
 \bigoplus_{V\in \calv} H_n^{\Gamma}(\Gamma \times_V\eub{V}
  \to \Gamma \times_V \pt;\bfL_{R,w|_V}^{\langle j+1,j \rangle}) 
\xrightarrow{\cong}
 H_n^{\Gamma}(\eub{\Gamma} \to \edub{\Gamma};\bfL_{R,w}^{\langle j+1,j \rangle}).
   \]
   Using the induction structure of the equivariant homology theory
   $\calh^?_*(-;\bfL_{R,w}^{\langle j+1,j \rangle})$, we get isomorphisms
   \[
     H_n^{V}(\eub{V} \to \pt;\bfL_{R,w}^{\langle j+1,j \rangle})
     \xrightarrow{\cong}
     H_n^{\Gamma}(\Gamma \times_V\eub{V} \to \Gamma \times_V \pt;\bfL_{R,w}^{\langle j+1,j \rangle})
   \]
   for every $V \in \calv$. Hence~\eqref{Vanishing_of_H_n_upper_V(eub(V)_to_pt;bfL_(R,w)_upper_(langle_j_plus_1,j_rangle))}
   implies   that~\eqref{eub__edub_for_j_plus_1,j_is_iso} is bijective.

   We conclude from~\eqref{eub_to_pt_for_j_plus_1,j_is_iso}
   and~\eqref{eub__edub_for_j_plus_1,j_is_iso} that the map
   \begin{equation}
     H_n^{\Gamma}(\edub{\Gamma};\bfL_{R,w}^{\langle j+1,j \rangle}) \xrightarrow{\cong}
     H_n^{\Gamma}(\pt;\bfL_{R,w}^{\langle j+1,j \rangle})
     \label{edub_to_pt_for_j_plus_1,j_is_iso}
   \end{equation}
   is bijective for all $n \in \IZ$.

By the induction hypothesis the map
\[H_n^{\Gamma}(\edub{\Gamma};\bfL_{R,w}^{\langle j \rangle})
     \xrightarrow{\cong} 
    H_n^{\Gamma}(\pt;\bfL_{R,w}^{\langle j \rangle})
  \]
  is bijective for all $n \in \IZ$. Now we conclude from~\eqref{edub_to_pt_for_j_plus_1,j_is_iso},
  the Five-Lemma and the long exact
  sequence given by the first two columns in~\eqref{diagram_for_(j_plus_1,j)}  applied to the projection
  $f \colon X = \edub{\Gamma} \to Y= \pt$ that also the map
  \[H_n^{\Gamma}(\edub{\Gamma};\bfL_{R,w}^{\langle j+1 \rangle})
    \xrightarrow{\cong} H_n^{\Gamma}(\pt;\bfL_{R,w}^{\langle j+1 \rangle})
  \]
  is bijective for all $n \in \IZ$.
This finishes the proof of the induction step and hence  of
assertion~\eqref{the:computing_L-groups:edub_pt}.
\\[1mm]~\eqref{the:computing_L-groups:eub(Gamma)_and_edub(Gamma)}
 We conclude from~\cite[Corollary~3.27]{Davis-Khan-Ranicki(2011)} and the
  Bass-Heller-Swan decomposition that $\Wh_j(D_{\infty};R) = 0$ and $\Wh_j(\IZ;R) = 0$ holds
  for $j \le j_0$.  Since any infinite cyclic subgroup of $\Gamma$ is infinite cyclic or
  isomorphic to $D_{\infty}$ by
  Lemma~\ref{lem:consequences_of_the_conditions_(M),(NM)_(F)}, we get $\Wh_j(W;R) = 0$ for
  $j \le j_0 \le -1 $ for any virtually cyclic subgroup $W$ of $\Gamma$.  Since $R$ is
  regular, $K_j(R) = 0$ for $j \le j_0$. This implies $H^W_j(EW;\bfK_R) = 0$ for $j \le j_0$
  by a spectral sequence argument. Hence $K_j(RW) = H^W_j(\pt;\bfK_R) \cong H^W_j(EW \to \pt;\bfK_R) \cong \Wh_j(W;R) = 0$
  holds for $j \le j_0$. Therefore
  we obtain from~\cite[Section~1]{Bartels(2003b)} the desired short split exact sequence.
  \\[1mm]~\eqref{the:computing_L-groups:eub(Gamma)_relative_terms_as_bigoplus}
  We get from excision  and 
Theorem~\ref{the:constructing_underline(E)Gamma)}~\eqref{the:constructing_underline(E)Gamma):Fin}
the first desired isomorphism
\begin{eqnarray*}
   \bigoplus_{F \in \calm} H_n^{F}(EF\to \pt;\bfL_{R,w|_F}^{\langle j \rangle})
   & \xrightarrow{\cong} &
   H_n^{\Gamma}(E\Gamma \to \eub{\Gamma} ;\bfL_{R,w}^{\langle j \rangle}).
\end{eqnarray*}

Next we show
\begin{equation}
  H_n^{V}(\eub{V} \to \pt;\bfL_{R,w|_V}^{\langle j \rangle}) = 0 \quad \text{for}\; n \in \IZ, V \in \calvI.
\label{H_n_upper_V(eub(V)_to_pt;bfL_(R,w)_upper_(langle_j_rangle))}
\end{equation}
This is obvious, if $V$ is finite. 
We conclude from
Lemma~\ref{lem:consequences_of_the_conditions_(M),(NM)_(F)} that every $V \in \calvI$  is infinite cyclic.
Hence it suffices to treat the case where $V$ is infinite cyclic. 

The assembly map
$H_n^{V}(\eub{V};\bfL_{R,w|_V}^{\langle -\infty\rangle}) \to H_n^{V}(\pt;\bfL_{R,w|_V}^{\langle -\infty\rangle})$
is bijective for all $n \in \IZ$. This follows from~\cite[Theorem~13.56]{Lueck(2022book)}  which is in this case essentially
the Shaneson-splitting. Hence
$H_n^{V}(\eub{V} \to \pt;\bfL_{R,w|_V}^{\langle -\infty \rangle})$ vanishes  for $n \in \IZ$.

Next we show that $H_n^{V}(\eub{V} \to \pt;\bfL_{R,w|_V}^{\langle j \rangle}) = 0$ holds for $j \in \{2,1,0,-1, \ldots\}$
and $n \in \IZ$.  For this purpose it suffices to show that
$H_n^{V}(\eub{V} \to \pt;\bfL_{R,w|_V}^{\langle j+1,j \rangle}) = 0$ holds for $j \in \{1,0,-1, \ldots\}$,
since we have already proved the claim for $j = -\infty$.
Since $V$ acts freely on $\eub{V}$, we conclude from
Lemma~\ref{lem:results_about_relative-spectrum}~\eqref{lem:results_about_relative-spectrum:free_actions}
\[
H_n^{V}(\eub{V} \to \pt;\bfL_{R,w|_V}^{\langle j+1,j \rangle})  \cong H_n^{V}(\pt;\bfL_{R,w|_V}^{\langle j+1,j \rangle}).
\]
Because of Lemma~\ref{lem:results_about_relative-spectrum}~\eqref{lem:results_about_relative-spectrum_vanishing}
it suffices to show $\widehat{H}^n(\IZ/2,\Wh_j(V;R)) = 0$.
This follows from the conclusion of
Theorem~\ref{the:computing_K-groups} that $\Wh_j(V;R)$ vanishes. This finishes the proof
of~\eqref{H_n_upper_V(eub(V)_to_pt;bfL_(R,w)_upper_(langle_j_rangle))}.

Next we prove
\begin{equation}
  H_n^{\Gamma}(\eub{\Gamma} \to \EGF{\Gamma}{\calvcyc_I};\bfL_{R,w|_V}^{\langle j \rangle}) = 0 \quad \text{for}\; n \in \IZ.
\label{H_n_upper_V(eub(V)_to_EGF(G)(calvycyc_I);bfL_(R,w)_upper_(langle_j_rangle))}
\end{equation}
For a $\Gamma$-$CW$-complex $Z$ let $\pr_Z \colon \eub{\Gamma} \times Z\to Z$ be the
projection. Since the projection $\eub{\Gamma} \times \EGF{\Gamma}{\calvcyc_I} \to \eub{\Gamma}$ is
a $\Gamma$-homotopy equivalence, it suffices to show
$H_n^{\Gamma}(\pr_{\EGF{\Gamma}{\calvcyc_I}};\bfL_{R,w|_V}^{\langle j \rangle}) = 0$ for all
$n \in \IZ$.  We will show more generally for any $\Gamma$-$CW$-complex $Z$, whose isotropy
groups belong to $\calvcyc_I$, that $H_n^{V}(\pr_Z;\bfL_{R,w|_V}^{\langle j \rangle})$ vanishes
for all $n \in \IZ$. By a colimit argument and induction over the skeletons this claim
can be reduced to the special case $Z = \Gamma/V$ for $V \in \calvcyc_I$. Then
$\eub{\Gamma} \times \Gamma/V$ is $\Gamma$-homeomorphic to $\Gamma \times_V \eub{V}$ and
$\pr_{\Gamma_V}$ is the induction with the inclusion $V \to \Gamma$ applied to
$\eub{V} \to \pt$.  By the induction structure it remains to show
$H_n^V(\eub{V} \to \pt;\bfL_{R,w|_V}^{\langle j \rangle}) = 0$ for all $n \in \IZ$ and all $V \in \calvcyc_I$,
what we have already done,
see~\eqref{H_n_upper_V(eub(V)_to_pt;bfL_(R,w)_upper_(langle_j_rangle))}.  This finishes
the proof
of~\eqref{H_n_upper_V(eub(V)_to_EGF(G)(calvycyc_I);bfL_(R,w)_upper_(langle_j_rangle))}.

  From~\eqref{H_n_upper_V(eub(V)_to_EGF(G)(calvycyc_I);bfL_(R,w)_upper_(langle_j_rangle))}
  we obtain an isomorphism
  \[H_n^{\Gamma}(\eub{\Gamma} \to \edub{\Gamma};\bfL_{R,w}^{\langle j \rangle})
    \xrightarrow{\cong}
    H_n^{\Gamma}(\EGF{\Gamma}{\calvcyc_I}\to \edub{\Gamma};\bfL_{R,w}^{\langle j \rangle}).
  \]
We conclude from Lemma~\ref{lem:consequences_of_the_conditions_(M),(NM)_(F)}
that $\Gamma$ satisfies  (V$_{\operatorname{II}}$) and  (NV$_{\operatorname{II}}$).
We get from excision  and 
Theorem~\ref{the:constructing_underline(E)Gamma)}~\eqref{the:constructing_underline(E)Gamma):Vcyc_II}
isomorphisms
\begin{eqnarray*}
    \bigoplus_{V \in \calvII} H_n^{V}(\eub{V} \to \pt;\bfL_{R,w|_V}^{\langle j \rangle})
    & \xrightarrow{\cong} &
    H_n^{\Gamma}(\EGF{\Gamma}{\calvcyc_I}\to \edub{\Gamma};\bfL_{R,w}^{\langle j \rangle}).
\end{eqnarray*}
Hence we obtain for every $n \in \IZ$ the second desired isomorphism
\begin{eqnarray*}
    \bigoplus_{V \in \calvII} H_n^{V}(\eub{V} \to \pt;\bfL_{R,w|_V}^{\langle j \rangle})
    & \xrightarrow{\cong} &
    H_n^{\Gamma}(\eub{\Gamma}\to \edub{\Gamma};\bfL_{R,w|_V}^{\langle j \rangle}).
\end{eqnarray*}
  For any $V \in \calvII$ the canonical map
  \[H_n^{V}(\eub{V} \to \pt;\bfL_{R,w|_V}^{\langle j \rangle}) \xrightarrow{\cong}
    H_n^{V}(\eub{V} \to \pt;\bfL_{R,w|_V}^{\langle -\infty\rangle})
  \]
  is bijective, since we have already shown that $H_n^{V}(\eub{V} \to \pt;\bfL_{R,w|_V}^{\langle j+1,j \rangle})$
  vanishes for all $n \in \IZ$ and $j \in  \{1,0,-1, \ldots\}$ in~\eqref{eub__edub_for_j_plus_1,j_is_iso}.
  This finishes the proof of Theorem~\ref{the:computing_L-groups}.
\end{proof}

\begin{remark}[Identification with $\UNil$-groups]%
\label{rem:Identification_with_UNil-groups}
  The groups $H_n^{V}(\eub{V} \to \pt;\bfL_{R,w}^{\langle j \rangle})$
    appearing in
    assertion~\eqref{the:computing_L-groups:eub(Gamma)_relative_terms_as_bigoplus} of
    Theorem~\ref{the:computing_L-groups} for $V \in \calvII$, which has to be isomorphic
    to $D_{\infty}$, are independent of the decoration $j$,
     and can be identified with Cappell's $\UNil$-groups
     $\UNil_n(R;R^{\pm 1},R^{\pm 1})$, where the signs $\pm 1$ come from the orientation character
     $w|V$. 
      If both signs are $+1$, they will be omitted from the notation. In the case
    $R = \IZ$ the groups $\UNil_n(\IZ;\IZ,\IZ)$ have been computed by Banagl, Connolly, Davis,
    Kozniewski, and Ranicki,
    see~\cite{Connolly-Davis(2004),Banagl-Ranicki(2006),Connolly-Ranicki(2005),Connolly-Kozniewski(1995)},
    \[
      \UNil_n(\IZ;\IZ,\IZ) \cong \begin{cases}
        \{0\} & n \equiv 0 \mod (4);
        \\
        \{0\} & n \equiv 1 \mod (4);
        \\
        (\IZ/2)^{\infty}& n \equiv 2 \mod (4);
        \\
        (\IZ/2 \oplus \IZ/4)^{\infty}& n \equiv 3 \mod (4).
      \end{cases}
    \]
   There is an isomorphism $\UNil_{n}(\IZ;\IZ,\IZ) \xrightarrow{\cong} \UNil_{n+2} (\IZ;\IZ^{-1},\IZ^{-1})$ for all $n$ (see~\cite[page 1118]{Cappell(1974c)}).
  \end{remark}

  \begin{remark}\label{rem:Bartels-splitting}
   We do not know, whether assertion~\eqref{the:computing_L-groups:eub(Gamma)_and_edub(Gamma)}
in Theorem~\ref{the:computing_L-groups} does hold also for other decorations than $-\infty$.
\end{remark}

\begin{remark}\label{rem:FJ_not_true_for_all_decorations}
  One interesting feature of
  Theorem~\ref{the:computing_L-groups}~\eqref{the:computing_L-groups:edub_pt} is that it
  holds for all decorations.  Note that the $L$-theoretic Farrell-Jones Conjecture is
  formulated for the decoration $\langle -\infty \rangle$ only.  Indeed, there are
  counterexamples for the decorations $s$, $h$ and $p$,
  see~\cite{Farrell-Jones-Lueck(2002)}.
\end{remark}

\begin{remark}\label{rem:computation_of_the_underline(Gamma)_part}
  Assume in the sequel that the orientation homomorphism is trivial.
  Then the   computation of $L_n^{\langle j \rangle}(R\Gamma,w)$ boils down by
  Theorem~\ref{the:computing_L-groups}~\eqref{the:computing_L-groups:edub_pt}
  to the computation of the terms
  $H_n^{\Gamma}(\eub{\Gamma};\bfL_{R}^{\langle j \rangle})$ and
  $H_n^{\Gamma}(\eub{\Gamma} \to \edub{\Gamma};\bfL_{R}^{\langle j \rangle})$, modulo a
  possible extension problem unless $j = -\infty$, see
  Theorem~\ref{the:computing_L-groups}~\eqref{the:computing_L-groups:eub(Gamma)_and_edub(Gamma)}
  and Remark~\ref{rem:Bartels-splitting}. The computation of
  $H_n^{\Gamma}(\eub{\Gamma} \to \edub{\Gamma};\bfL_{R}^{\langle j \rangle})$ is
  complete by
  Theorem~\ref{the:computing_L-groups}~\eqref{the:computing_L-groups:eub(Gamma)_relative_terms_as_bigoplus}
  and Remark~\ref {rem:Identification_with_UNil-groups}, if $R = \IZ$. Some information about
  $H_n^{\Gamma}(\eub{\Gamma};\bfL_{R}^{\langle j \rangle})$ is given in
  Theorem~\ref{the:computing_L-groups}~\eqref{the:computing_L-groups:eub(Gamma)_relative_terms_as_bigoplus}.
  One can do a little better than this. Namely, there exists the following long exact
  sequence (see~\cite[Lemma 7.2(ii)]{Davis-Lueck(2013)})
  \begin{multline*}
    \cdots \to H_{n+1}(\bub{\Gamma};\bfL(R)) \to \bigoplus_{F \in \calm}
    \widetilde{L}^{\langle j \rangle}_n(RF) \to
    H_n^{\Gamma}(\eub{\Gamma};\bfL_R^{\langle j \rangle})
    \\
    \to H_{n}(\bub{\Gamma};\bfL^{\langle j \rangle}(R)) \to \bigoplus_{F \in \calm}
    \widetilde{L}^{\langle j \rangle}_n(RF) \to \cdots.
  \end{multline*}
  Here $\bub{\Gamma}$ is $\Gamma \backslash \eub{\Gamma}$ and
  $H_{n}(\bub{\Gamma};\bfL^{\langle j \rangle}(R))$ is its homology with respect to the 
  $L$-theoretic ring spectrum with decoration $\langle j \rangle$ of the ring $R$. The group
  $\widetilde{L}^{\langle j \rangle}_n(RF)$ is  defined to be the kernel of the map
  $L^{\langle j \rangle}_n(RF) \to L^{\langle j \rangle}_n(R)$ coming from the group
  homomorphism $F \to \{1\}$. The composite
  \[
  \bigoplus_{F \in \calm}\widetilde{L}^{\langle j \rangle}_n(RF) \to  H_n^{\Gamma}(\eub{\Gamma};\bfL_R^{\langle j \rangle})
  \to H_n^{\Gamma}(\pt;\bfL_R^{\langle j \rangle}) = L_n^{\langle j \rangle}(R\Gamma)
\]
is the direct sum of the maps induced by the various inclusions $F \to \Gamma$.  The map
$H_n^{\Gamma}(\eub{\Gamma};\bfL_R^{\langle j \rangle}) \to
H_{n}(\bub{\Gamma};\bfL^{\langle j \rangle}(R))$ comes from the induction structure
applied to the group homomorphism $\Gamma \to \{1\}$. Note that this map has a section, if
one inverts the orders of all finite subgroups of $\Gamma$. Essentially the sequence above
reduces the computation of $L_n^{\langle j \rangle}(R\Gamma)$ to that of
$H_{n}(\bub{\Gamma};\bfL^{\langle j \rangle}(R))$, which can be done in special cases,
when one understands the structure of $\bub{\Gamma}$.  (See~\cite[Theorem~10.1]{Davis-Lueck(2013)}.)

  The construction of this
  sequence is analogous to the one appearing in~\cite[Theorem~5.1~(b)]{Davis-Lueck(2003)}
  and left to the reader.
\end{remark}


\typeout{-------------------------- Section:The (periodic) structure group of  a pair   --------------------------}


\section{The (periodic) structure group of  a pair}%
\label{sec:The_(periodic)_structure_set_of_a_pair}.

Let $(A, \partial A)$ be a  $CW$-pair.
Suppose for simplicity that $A$ is connected. We do not assume that $\partial A$ is connected.
Let $\Gamma = \pi_1(A) $ be its fundamental group and $\wt A$ its universal cover.
Let $w\colon  \pi_1A \to \{\pm1\}$ be the orientation character.    Let $\Pi(A)$ and $\Pi(\partial A)$ be the fundamental groupoids.

There is  a long exact sequence of abelian groups, called the \emph{periodic algebraic surgery exact sequence}
\begin{multline}
  \cdots \to H_{n}^{\Gamma}(\wt A,\partial \wt A;\bfL^s_{\Z,w})
  \to
  L^s_{n}(\IZ\Pi(\partial A) \to \IZ\Pi(A),w) \to 
  \cals^{\per,s}_n(A,\partial A)
  \\
  \to
H_{n-1}^{\Gamma}(\wt A,\partial \wt A;\bfL^s_{\Z,w})
  \to
  L^s_{n-1}(\IZ\Pi(\partial A) \to \IZ\Pi(A),w) \to \cdots.
  \label{long_exact_sequence_for_cals_upper_(per,s)(A,partial_A))_periodic}  
\end{multline}

If we replace $\bfL^s_{\Z,w}$ by its $1$-connective cover,
see Subsection~\ref{subsec:Equivariant_homology_theories_from_spectra_over_groupoids}, we obtain a
long exact sequence of abelian groups, called the \emph{algebraic surgery exact sequence}
\begin{multline}
  \cdots \to H_{n}^{\Gamma}(\wt A,\partial \wt A;\bfL^s_{\Z,w}\langle 1 \rangle)
  \to
  L^s_{n}(\IZ\Pi(\partial A) \to \IZ\Pi(A),w) \to 
  \cals^{\per,s}_n(A,\partial A)
  \\
  \to
H_{n-1}^{\Gamma}(\wt A,\partial \wt A;\bfL^s_{\Z,w}\langle 1 \rangle)
  \to
  L^s_{n-1}(\IZ\Pi(\partial A) \to \IZ\Pi(A),w) \to \cdots.
  \label{long_exact_sequence_for_cals_upper_(per,s)(A,partial_A))}   
\end{multline}

These sequences  can be constructed by taking homotopy groups of certain fibrations of spectra, see
for instance~\cite[Definition~14.6 on page~148]{Ranicki(1992)}.

Suppose that $(A, \partial A)$ is  a $(n-1)$-dimensional finite Poincar\'e pair.  Then the  algebraic surgery
sequence~\ref{long_exact_sequence_for_cals_upper_(per,s)(A,partial_A))} can be identified
with the geometric surgery exact sequence due to Sullivan and Wall, see for
instance~\cite[Theorem~18.5 on page~198]{Ranicki(1992)}.
So the algebraic structure groups
$\cals^{s}_n(A,\partial A)$ are relevant for the application of surgery theory for
topological manifolds, whereas the periodic version $\cals^{\per,s}_n(A,\partial A)$ is for us a
very good approximation of $\cals^{s}_n(A,\partial A)$.

The definition of the algebraic periodic structure group
$\cals^{\per,\langle j \rangle }_n(A,\partial A)$ and its non-periodic companion
$\cals^{\langle j \rangle }_n(A,\partial A)$ make sense for any
$j \in \{2,1,0, -1, \ldots\} \amalg \{-\infty\}$.  This includes the exact
sequences~\eqref{long_exact_sequence_for_cals_upper_(per,s)(A,partial_A))_periodic}
and~\eqref{long_exact_sequence_for_cals_upper_(per,s)(A,partial_A))}.

The next result will be crucial for the proof of our main theorems.  A $\Gamma$-homotopy
equivalence of free cocompact $\Gamma$-$CW$-complexes $(F,f) \colon (X,A) \to (Y,B)$ is
called simple, if the Whitehead torsions $\tau(f)$, $\tau(\partial f)$ and
$\tau(f,\partial f)$  vanish in $\Wh(\Gamma)$.  Additivity for the Whitehead torsion implies
that $(F,f)$ is simple, if two of the three elements $\tau(f)$, $\tau(\partial f)$ and $\tau(f,\partial f)$ in $\Wh(\Gamma)$
vanish.

\begin{theorem}\label{the:surgery_classification} Let $\pi$ be a finite index subgroup of a group $\Gamma$.
  Let $(Z, \partial Z)$ be a free cocompact $\Gamma$-$CW$-pair such that
  $Z$ is simply connected and $(Z/\Gamma,\partial Z/\Gamma)$ is a
  simple finite Poincar\'e pair of dimension $d \ge 6$.

\begin{enumerate}
\item\label{the:surgery_classification:existence} 
  Then $(Z/\pi, \partial Z/\pi)$ is a finite simple Poincar\'e pair of dimension $d$. 
  Suppose that the transfer map
  \[
  p^* \colon \cals^{s}_d(Z/\Gamma,\partial Z/\Gamma) \to \cals^{s}_d(Z/\pi,\partial Z/\pi)
  \]
  is injective. Then the following assertions are equivalent:

  \begin{enumerate}
   \item\label{the:surgery_classification:existence:Y} 
     There is a free cocompact $\pi$-manifold $N$ with boundary $\partial N$ and a simple $\pi$-homotopy
     equivalence of pairs $(N,\partial N) \to (Z,\partial Z)$;

    \item\label{the:surgery_classification:existence:Z} 
     There is a free cocompact $\Gamma$-manifold $M$ with boundary $\partial M$ and a simple $\Gamma$-homotopy
     equivalence of pairs $(M,\partial M) \to (Z,\partial Z)$;
   \end{enumerate}  

 \item\label{the:surgery_classification:uniqueness}
   Let $(f_i,\partial f_i) \colon (M_i,\partial M_i) \to (Z,\partial Z)$
   be simple $\Gamma$-homotopy equivalences with free compact $\Gamma$-manifolds $M_i$ with boundary  for $i = 0,1$.
   Suppose that    $\cals^{s}_{d+1}(Z/\Gamma,\partial Z/\Gamma)$ vanishes.

   Then there is a $\Gamma$-homeomorphism
   $(g,\partial g) \colon (M_0,\partial M_0) \xrightarrow{\cong} (M_1,\partial M_1)$ such
   that $(f_1,\partial f_1) \circ (g,\partial g)$ is $\Gamma$-homotopic as map of $\Gamma$-pairs to
   $(f_0,\partial f_0)$.

 \item\label{the:surgery_classification:decoration_h} There are also versions of
   assertions~\eqref{the:surgery_classification:existence}
   and~\eqref{the:surgery_classification:uniqueness} for the decoration $h$.  One has to
   drop simple everywhere, change the decoration from $s$ to $h$ everywhere, and weaken
   the conclusion in assertion~\eqref{the:surgery_classification:uniqueness} to the
   following statement:  There is a $\Gamma$-$h$-cobordism $(W,\partial W)$ with a $\Gamma$-homotopy
   equivalence of pairs
   $(F,\partial F) \colon (W,\partial W) \to (Z \times [0,1], \partial(Z \times [0,1]))$
   from $(f_0,\partial f_0) \colon (M_0,\partial M_0) \to (Z,\partial Z)$ to
   $(f_1,\partial f_1) \colon (M_1,\partial M_1) \to (Z,\partial Z)$.
    \end{enumerate}
\end{theorem}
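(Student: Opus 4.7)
The plan is to reduce all three assertions to Ranicki's theory of the total surgery obstruction for Poincar\'e pairs, combined with the relative topological $s$-cobordism theorem in dimension $\ge 6$. Write $p\colon Z/\pi \to Z/\Gamma$ for the finite covering map induced by $\pi \subseteq \Gamma$.

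First I would verify that $(Z/\pi,\partial Z/\pi)$ is a finite simple Poincar\'e pair of dimension $d$. Poincar\'e duality transfers up finite covers via the lifted fundamental class, and the Whitehead torsion of the chain-level Poincar\'e duality of the cover equals the $K_1$-transfer of that of the base, so simpleness lifts from $(Z/\Gamma,\partial Z/\Gamma)$ to $(Z/\pi,\partial Z/\pi)$. Recall from Ranicki~\cite[Theorem~17.4]{Ranicki(1992)} that every finite simple Poincar\'e pair $(Y,\partial Y)$ of dimension $d\ge 5$ carries a total surgery obstruction $s(Y,\partial Y)\in \cals^s_d(Y,\partial Y)$ whose vanishing is equivalent to $(Y,\partial Y)$ being simple homotopy equivalent to a compact topological manifold pair. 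A free cocompact $\pi$-manifold $N$ with a simple $\pi$-homotopy equivalence to $(Z,\partial Z)$ descends to a simple homotopy equivalence $(N/\pi,\partial N/\pi)\to (Z/\pi,\partial Z/\pi)$ with $N/\pi$ a topological manifold, and conversely; hence \eqref{the:surgery_classification:existence:Y} is equivalent to $s(Z/\pi,\partial Z/\pi)=0$, and similarly \eqref{the:surgery_classification:existence:Z} is equivalent to $s(Z/\Gamma,\partial Z/\Gamma)=0$.

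The key naturality input is $p^*\bigl(s(Z/\Gamma,\partial Z/\Gamma)\bigr)=s(Z/\pi,\partial Z/\pi)$, which follows from the fact that the symmetric Poincar\'e chain complex underlying the total surgery obstruction is preserved by the algebraic transfer associated to a finite cover. The implication \eqref{the:surgery_classification:existence:Z}$\Rightarrow$\eqref{the:surgery_classification:existence:Y} is then immediate, since restricting the $\Gamma$-action on $M$ to $\pi$ already exhibits $M$ as a $\pi$-manifold model. For \eqref{the:surgery_classification:existence:Y}$\Rightarrow$\eqref{the:surgery_classification:existence:Z}, the hypothesis yields $s(Z/\pi,\partial Z/\pi)=0$, so $p^*(s(Z/\Gamma,\partial Z/\Gamma))=0$, and the assumed injectivity of $p^*$ forces $s(Z/\Gamma,\partial Z/\Gamma)=0$, which is \eqref{the:surgery_classification:existence:Z}.

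For assertion~\eqref{the:surgery_classification:uniqueness}, both $(f_i,\partial f_i)$ determine elements of the simple topological structure set of $(Z/\Gamma,\partial Z/\Gamma)$; by Ranicki~\cite[Theorem~18.5]{Ranicki(1992)} this pointed set is in bijection with $\cals^s_{d+1}(Z/\Gamma,\partial Z/\Gamma)$. Under the vanishing hypothesis the two elements must coincide, which geometrically produces a simple $\Gamma$-$s$-cobordism $(W,\partial W)$ with $\partial W \supseteq M_0\sqcup M_1$ together with a $\Gamma$-map $F\colon W\to Z\times[0,1]$ restricting to $f_i$ at the ends. Since $\dim(W/\Gamma)=d+1\ge 7$ and $\Gamma$ acts freely, the relative topological $s$-cobordism theorem applied to $W/\Gamma$ provides a homeomorphism $W/\Gamma\cong (M_0/\Gamma)\times[0,1]$ rel $M_0/\Gamma$, which lifts to the required $\Gamma$-homeomorphism $(g,\partial g)\colon (M_0,\partial M_0)\xrightarrow{\cong}(M_1,\partial M_1)$; the $\Gamma$-map $F$ then supplies the $\Gamma$-homotopy $(f_1,\partial f_1)\circ(g,\partial g)\simeq (f_0,\partial f_0)$. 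Assertion~\eqref{the:surgery_classification:decoration_h} follows by the same scheme with $s$ replaced by $h$ throughout; the $s$-cobordism theorem is no longer available, but the $h$-decorated structure set directly corresponds to $h$-cobordism classes of structures, which accounts exactly for the weakened conclusion. The hardest part will be pinning down the naturality $p^*\circ s=s\circ(\text{transfer on Poincar\'e pairs})$ of the total surgery obstruction under the finite cover, together with Ranicki's identification of the algebraic and geometric simple structure sets in the free cocompact equivariant setting; both statements are implicit in~\cite{Ranicki(1992)} but require care because the equivariance is packaged through the fundamental groupoid rather than a single fundamental group.
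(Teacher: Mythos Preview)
Your proposal is correct and follows essentially the same route as the paper: both use Ranicki's total surgery obstruction (naturality under the finite-cover transfer plus injectivity of $p^*$) for assertion~\eqref{the:surgery_classification:existence}, and the identification of the geometric with the algebraic structure set for assertions~\eqref{the:surgery_classification:uniqueness} and~\eqref{the:surgery_classification:decoration_h}. The paper is terser---it cites~\cite[Definition~17.1, Proposition~17.2, pp.~207--208]{Ranicki(1992)} for the total surgery obstruction and~\cite[Theorem~18.15]{Ranicki(1992)} (not~18.5) for the structure-set identification, and suppresses the explicit $s$-cobordism step you spell out---but the argument is the same.
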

\begin{proof}~\eqref{the:surgery_classification:existence} The
  implication~\eqref{the:surgery_classification:existence:Z}
  $\implies $~\eqref{the:surgery_classification:existence:Y} is obvious.  The
  implication~\eqref{the:surgery_classification:existence:Y}
  $\implies $~\eqref{the:surgery_classification:existence:Z} follows from Ranicki's theory
  of the total surgery obstruction, which is explained for closed manifolds and Poincar\'e
  complexes in~\cite[Definition~17.1 and Proposition~17.2 on page~190]{Ranicki(1992)} and
  extends to pairs, see~\cite[page~207--208]{Ranicki(1992)}.  More information can be
  found in~\cite{Kuehl-Macko-Mole(2013)}.  The total surgery obstruction assigns to
  $(Z/\pi,\partial Z/\pi)$ an element in $\cals^{s}_d(Z\pi,\partial Z/\pi)$, which
  vanishes, if and only if assertion~\eqref{the:surgery_classification:existence:Y} is
  true.  The total surgery obstruction assigns to $(Z/\Gamma,\partial Z\Gamma)$ an element
  in $\cals^{s}_d(Z/\Gamma,\partial Z/\Gamma)$, which vanishes, if and only if
  assertion~\eqref{the:surgery_classification:existence:Z} is true.  The map
  $p^* \colon \cals^{s}_d(Z\Gamma,\partial Z/\Gamma) \to \cals^{s}_d(Z\pi,\partial Z/\pi)$
  sends the total surgery obstruction of $(Z/\Gamma,\partial Z/\Gamma)$ to that of
  $(Z/\pi,\partial Z/\pi)$.   Since $p^*$ is assumed to be injective, the result follows.
  \\[1mm]~\eqref{the:surgery_classification:uniqueness},\eqref{the:surgery_classification:decoration_h} This
  follows from surgery theory and the identification of the geometric and the algebraic
  structure set, see~\cite[Theorem~18.5 on page~198]{Ranicki(1992)}, which also makes
  sense for Poincar\'e pairs. See also~\cite{Kuehl-Macko-Mole(2013)}.
\end{proof}

For the reader's convenience we spell out what  a $\Gamma$-$h$-cobordism
$(W,\partial W)$ with a $\Gamma$-homotopy equivalence of $\Gamma$-pairs
$(F,\partial F) \colon (W,\partial W) \to (Z \times [0,1], \partial(Z \times [0,1]))$ is.
Namely, we have a decomposition
$\partial W = \partial_0 W \cup \partial_1W \cup \partial_2 W$ into $\Gamma$-submanifolds of
codimension zero such that $\partial \partial_2 W = \partial \partial_0 W \cup \partial \partial_1 W$ and
$\partial_0 W \cap  \partial_1 W = \emptyset$ hold,  $\partial F$ induces $\Gamma$-homotopy equivalences
of pairs
   \begin{eqnarray*}
     \partial_0 F \colon (\partial_0 W,\partial \partial_0W)
     & \xrightarrow{\simeq} &
     (Z,\partial Z) \times \{0\};
    \\
     \partial_1 F \colon (\partial_1 W,\partial \partial_1W)
     & \xrightarrow{\simeq} &
     (Z,\partial Z) \times \{1\};
     \\
     \partial_2 F \colon (\partial_2 W,\partial \partial_2W)
     & \xrightarrow{\simeq}  &
     (\partial Z \times [0,1], \partial Z \times \{0,1\}),
   \end{eqnarray*}
   and there are identifications of $(M_i,\partial M_i)$ with
   $(\partial_i W, \partial \partial_iW)$ compatible with the $\Gamma$-maps to $Z$ for $i = 0,1$. In particular
   $\partial_2 F \colon \partial_2 W \to \partial Z \times [0,1]$ yields an $\Gamma$-$h$-cobordism from
   $\partial f_0 \colon \partial_0M \to \partial Z$ to
   $\partial f_1 \colon \partial_1M \to \partial Z$.


\typeout{-------------------- Section: Vanishing of the periodic structure group   --------------------------}

\section{Computing the periodic structure group}%
\label{sec:Computing_the_periodic_structure_group}

Let $\Gamma$ be the group appearing in the extension~\eqref{group_extension_intro}.

The next theorem follows directly
from~\cite[Theorem~1.12, Theorem~7.12, and Theorem~10.2]{Lueck(2022_Poincare_models)}.
It solves the existence question on the level
of Poincar\'e pairs and thus opens the door to apply surgery theory to replace up to (simple) homotopy  a
Poincar\'e pair by  a manifold with boundary.

Recall that $\calm$ is  a complete system of representatives of the conjugacy classes of maximal
  finite subgroups and $\calvII$ is  a complete system of representatives of the
  conjugacy classes of maximal virtually cyclic subgroups of type {II}.

\begin{theorem}[Poincar\'e models]\label{the:Poincare_models}
  Suppose that the following conditions are satisfied:

      \begin{itemize}

      \item The natural number $d$ satisfies $d \ge 3$;
      
          \item There is a finite $d$-dimensional Poincar\'e complex, which is homotopy
      equivalent to $B\pi$. Fix a generator  $[B\pi]$ of the
       infinite cyclic group $H_d^{\pi}(E\pi;\IZ^{w|_\pi})$;

      \item  For every $F \in \calm$
      the restriction of the homomorphism $w$ of~\eqref{w_colon_Gamma_to_(pm_1)}
      to $F$ is trivial, if $d$ is even, and is non-trivial, if $d$ is odd;

     \item $\Gamma$ satisfies conditions (M), (NM), and (H), see
     Definitions~\ref{def:conditions_on_Gamma_intro} and~\ref{definition:(H)};

    \item There exists a finite $\Gamma$-$CW$-model for $\eub{\Gamma}$;

     \item There exists an oriented  free $d$-dimensional
    slice system $\cals$ in the sense of Definition~\ref{def:free_d-dimensional_slice_system},
    which satisfies condition (S).
    Fix such a choice.
      
    \end{itemize}

    Put $\partial X = \coprod_{F \in \calm} \Gamma \times_F S_F$ and
    $C(\partial X) = \coprod_{F \in \calm} \Gamma \times_F D_F$ for $D_F$ the cone over
    $S_F$.

    Then there exists a finite free $\Gamma$-$CW$-pair $(X,\partial X)$ such that
    $X \cup_{\partial X} C(\partial X)$ is a model for $\eub{\Gamma}$ and
    $(X/\Gamma,\partial X/\Gamma)$ is a finite $d$-dimensional Poincar\'e pair.
    
  \end{theorem}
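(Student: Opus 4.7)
The plan is to construct $(X, \partial X)$ by equivariantly excising a neighborhood of the singular set in a finite $\Gamma$-$CW$-model for $\eub{\Gamma}$, arranged so that the boundary of the excised region is exactly $\coprod_{F \in \calm} \Gamma \times_F S_F$, and then to promote the quotient pair $(X/\Gamma, \partial X/\Gamma)$ to a Poincar\'e pair using the fundamental class $[E\pi/\pi]$ together with conditions (H) and (S).

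First I would start with a finite $\Gamma$-$CW$-model $\bfE$ for $\eub{\Gamma}$, available by hypothesis. Conditions (M) and (NM) combined with Lemma~\ref{lem:consequences_of_the_conditions_(M),(NM)_(F)} imply that the singular set $\bfE^{>1}$ is a discrete disjoint union of orbits $\Gamma \cdot x_F$, one for each $F \in \calm$, with stabilizer $F$. Choose an equivariant open neighborhood of $\bfE^{>1}$ of the form $\coprod_{F \in \calm} \Gamma \times_F U_F$ where each $U_F$ is a cofibrant $F$-neighborhood of $x_F$ whose boundary $\partial U_F$ is a free $F$-$CW$-complex of the $F$-homotopy type of $S^{d-1}$, and set $X$ to be the complement of the interior. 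The $F$-homotopy type of $\partial U_F$ is classified by a generator $\kappa_F' \in H_{d-1}(BF)$; condition (H) identifies $\kappa_F'$ with the class $\kappa_F$ of Definition~\ref{definition:(S)}, and condition (S) then identifies this with the class determined by $S_F$. A standard equivariant cell-trading argument then replaces $\partial U_F$ by $S_F$, yielding the desired finite free $\Gamma$-$CW$-pair $(X, \partial X)$ with $X \cup_{\partial X} C(\partial X) \simeq_\Gamma \eub{\Gamma}$.

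Next I would verify that $(X/\Gamma, \partial X/\Gamma)$ is a $d$-dimensional Poincar\'e pair. The chosen generator $[E\pi/\pi] \in H_d^\pi(E\pi; \IZ^{w|_\pi})$ transfers along the finite covering to produce a distinguished fundamental class $[X/\Gamma, \partial X/\Gamma] \in H_d(X/\Gamma, \partial X/\Gamma; \IZ^w)$. I would check that cap product with this class yields the Poincar\'e--Lefschetz duality isomorphism via a Mayer--Vietoris argument applied to the pushout decomposition $\eub{\Gamma} = X \cup_{\partial X} C(\partial X)$: for the cone pairs $(D_F, S_F)/F$ Poincar\'e duality is automatic since $(D_F, S_F) \cong_F (D^d, S^{d-1})$, while for the free complement it reduces, via the finite covering $B\pi \to X/\Gamma$ and up to boundary corrections, to the Poincar\'e duality of the given closed $d$-manifold homotopy equivalent to $B\pi$.

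The hardest step will be verifying that the fundamental classes on the two pieces match consistently along $\partial X$. This is exactly where conditions (H) and (S) enter: condition (H) guarantees that the boundary of $[X/\Gamma, \partial X/\Gamma]$ hits the correct generator of each $H_{d-1}(BF)$, and condition (S) identifies that generator with the orientation of $S_F/F$ prescribed by the slice system. Without either matching, the global fundamental class would fail to restrict correctly to the fundamental classes of the cones at the boundary, breaking duality on the whole pair. I expect the technical core of the argument to lie in making this matching precise at the chain level, and in particular in carefully tracking the transfer of $[E\pi/\pi]$ through the coverings and pushouts involved.
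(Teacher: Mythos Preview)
The paper does not prove this theorem in-text; it simply cites Theorems~1.12, 7.12, and~10.2 of the companion paper~\cite{Lueck(2022_Poincare_models)} and says the result follows directly. So there is no in-paper argument to compare your sketch against, but your proposal has a genuine gap that would have to be closed regardless of the route taken.

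The problematic step is in your first paragraph. Starting from an arbitrary finite $\Gamma$-$CW$-model $\bfE$ for $\eub{\Gamma}$, you assert that one can choose neighborhoods $U_F$ of the singular points whose boundaries $\partial U_F$ are free $F$-$CW$-complexes of the $F$-homotopy type of $S^{d-1}$. In a general finite $CW$-complex the link of a vertex has no reason to be a homotopy sphere; that is a manifold-like feature you are assuming, not deriving. Producing a model with this property is essentially equivalent to producing the pair $(X,\partial X)$ you are after, so as written the argument is circular. The construction in~\cite{Lueck(2022_Poincare_models)} does not proceed by excision from a pre-existing model: it first builds a specific finite $d$-dimensional model for $\eub{\Gamma}$ with singular set exactly $\coprod_{F\in\calm}\Gamma/F$ (Theorem~1.12 there), and then separately establishes the Poincar\'e pair structure and its compatibility with the slice system (Theorems~7.12 and~10.2). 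Also, condition~(H) does not ``identify $\kappa_F'$ with $\kappa_F$'' as you write; it is a surjectivity statement guaranteeing that the candidate relative fundamental class on $(E\Gamma,\partial E\Gamma)$ hits a \emph{generator} of each $H_{d-1}(BF)$, so that the boundary pieces $S_F/F$ can carry Poincar\'e fundamental classes compatible with the global one. Your identification of $\kappa_F'$ with that generator is a separate claim that would itself require the Poincar\'e structure you are trying to establish.

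A smaller point: you write $(D_F,S_F)\cong_F(D^d,S^{d-1})$, but in a slice \emph{system} (Definition~\ref{def:free_d-dimensional_slice_system}), as opposed to a slice \emph{manifold} system (Definition~\ref{def:manifold_slice_system}), $S_F$ is only homotopy equivalent to $S^{d-1}$, not homeomorphic. Your conclusion that $(D_F/F,S_F/F)$ is a Poincar\'e pair is still correct, because $S_F/F$ is a Swan complex and hence a $(d-1)$-dimensional Poincar\'e complex, but the reason you give is wrong.
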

  
  Recall that the first Stiefel-Whitney class of the finite $d$-dimensional Poincar\'e pair
  $(X/\Gamma,\partial X/\Gamma)$ is automatically  the homomorphism $w \colon \Gamma \to \{\pm 1\}$
    of~\eqref{w_colon_Gamma_to_(pm_1)}.

\begin{theorem}\label{the:computing_the_periodic_structure_group}
  Suppose:

    \begin{itemize}
    
    \item The natural number $d$ satisfies $d \ge 3$;
    
          \item There is a finite $d$-dimensional Poincar\'e complex, which is homotopy
      equivalent to $B\pi$. Fix a generator  $[B\pi]$ of the
       infinite cyclic group $H_d^{\pi}(E\pi;\IZ^{w|_\pi})$;

      \item  For every $F \in \calm$
      the restriction of the homomorphism $w$ of~\eqref{w_colon_Gamma_to_(pm_1)}
      to $F$ is trivial, if $d$ is even, and is non-trivial, if $d$ is odd;

   \item $\Gamma$ satisfies (M), (NM), (H),  and  (V$_{\operatorname{II}}$);

    \item There exists a finite $\Gamma$-$CW$-model for $\eub{\Gamma}$;

  \item There exists an oriented  free $d$-dimensional
    slice system $\cals$ in the sense of Definition~\ref{def:free_d-dimensional_slice_system},
    which satisfies condition (S).
    Fix such a choice;
    
      \item The group $\pi$ is a  Farrell-Jones group.
      \end{itemize}

   Let  $(X,\partial X)$ be a finite free $\Gamma$-$CW$-pair such that
   $\partial X = \coprod_{F \in \calm} \Gamma \times_F S_F$ holds,
    $X \cup_{\partial X} C(\partial X)$ is a model for $\eub{\Gamma}$, and
    $(X/\Gamma,\partial X/\Gamma)$ is a finite $d$-dimensional Poincar\'e pair.
    (It exists by Theorem~\ref{the:Poincare_models}.)

    Consider any decoration
    $j \in \{2,1,0,-1, \ldots \}\amalg \{-\infty\}$. In the sequel
    everything has to be understood with respect to the orientation
    homomorphism $w \colon \Gamma \to \{\pm 1\}$
    of~\eqref{w_colon_Gamma_to_(pm_1)}.

    Then:

   \begin{enumerate}
   \item\label{the:computing_the_periodic_structure_group:general_iso}
   We get for any $n \in \IZ$ an isomorphism
   \[
     H_n^{\Gamma}(\eub{\Gamma} \to \edub{\Gamma};\bfL^{\langle j \rangle}_{\IZ,w}) \xrightarrow{\cong}
     H_n^{\Gamma}(\eub{\Gamma} \to\pt;\bfL^{\langle j \rangle}_{\IZ,w}) \xrightarrow{\cong}
     \cals_n^{\per,\langle j \rangle}(X/\Gamma,\partial X/\Gamma);
   \]
 \item\label{the:computing_the_periodic_structure_group:UNil}
    For every $n \in \IZ$, there is an isomorphism
    \[
    \bigoplus_{V \in \calvII} \UNil_n(\IZ;\IZ^{(-1)^d},\IZ^{(-1)^d})  \xrightarrow{\cong}
     \cals_n^{\per,\langle j \rangle}(X/\Gamma,\partial X/\Gamma);
   \]

  \item\label{the:computing_the_periodic_structure_group:vanishing}
    Fix $n \in \IZ$. Then $H_n^{\Gamma}(\eub{\Gamma} \to\pt;\bfL^{\langle j \rangle}_{\IZ,w}) $ vanishes, if and only if we have 
    $\UNil_n(\IZ;\IZ^{(-1)^d},\IZ^{(-1)^d})  = 0$ for every $V \in \calvII$ or $\calvII$ is empty.
    
  \end{enumerate}
\end{theorem}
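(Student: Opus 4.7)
The strategy is to reduce everything to Theorem~\ref{the:computing_L-groups}. To apply that theorem I first verify its hypotheses. Since $\pi$ is a Farrell-Jones group and has finite index in $\Gamma$, closure of the Farrell-Jones class under passage to overgroups of finite index shows that $\Gamma$ is a Farrell-Jones group. The vanishing $\Wh_j(H;\IZ)=0$ for all $j\le -2$ and every finite $H$ is due to Carter, so we may take $j_0=-2$. The ring $R=\IZ$ is a principal ideal domain, so regularity, $K_n(\IZ)=0$ for $n<0$, and $K_0(\IZ)\xrightarrow{\cong}K_0(\IZ)$ are automatic, and conditions (M), (NM), (V$_{\operatorname{II}}$) are assumed.

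For the first isomorphism in~\eqref{the:computing_the_periodic_structure_group:general_iso}, Theorem~\ref{the:computing_L-groups}\eqref{the:computing_L-groups:edub_pt} gives $H_n^{\Gamma}(\edub{\Gamma}\to\pt;\bfL_{\IZ}^{\langle j\rangle})=0$ for all $n\in\IZ$. The long exact sequence for the composite $\eub{\Gamma}\to\edub{\Gamma}\to\pt$ in the equivariant homology theory $H_*^\Gamma(-;\bfL_{\IZ}^{\langle j\rangle})$ then produces the isomorphism $H_n^{\Gamma}(\eub{\Gamma}\to\edub{\Gamma};\bfL_{\IZ}^{\langle j\rangle})\xrightarrow{\cong}H_n^{\Gamma}(\eub{\Gamma}\to\pt;\bfL_{\IZ}^{\langle j\rangle})$.

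For the second isomorphism, I exploit the hypothesis that $X\cup_{\partial X}C(\partial X)$ is a model for $\eub{\Gamma}$. Since $D_F\to\pt$ is an $F$-homotopy equivalence for each $F\in\calm$ (the $F$-action is free off $0_F$, and $D_F$ deformation retracts $F$-equivariantly onto $\{0_F\}$), collapsing the cones yields a $\Gamma$-homotopy equivalence $C(\partial X)\xrightarrow{\simeq}\partial\eub{\Gamma}$. This converts the pushout into a $\Gamma$-homotopy pushout
\[
\xymatrix{\partial X \ar[r]\ar[d] & X \ar[d]\\ \partial\eub{\Gamma} \ar[r] & \eub{\Gamma}.}
\]
Excision and the freeness of the $\Gamma$-action on $(X,\partial X)$ yield $H_n^{\Gamma}(\eub{\Gamma},\partial\eub{\Gamma};\bfL_{\IZ}^{\langle j\rangle})\cong H_n^{\Gamma}(X,\partial X;\bfL_{\IZ}^{\langle j\rangle})=H_n(X/\Gamma,\partial X/\Gamma;\bfL_{\IZ}^{\langle j\rangle})$. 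The identifications $H_n^\Gamma(\partial\eub{\Gamma};\bfL)\cong\bigoplus_{F\in\calm}L_n^{\langle j\rangle}(\IZ F,w|_F)$ and $H_n^\Gamma(\pt;\bfL)=L_n^{\langle j\rangle}(\IZ\Gamma,w)$ allow me to identify $H_n^\Gamma(\partial\eub{\Gamma}\to\pt;\bfL)$ with the relative $L$-theory $L_n^{\langle j\rangle}(\IZ\Pi(\partial X/\Gamma)\to\IZ\Pi(X/\Gamma))$. The long exact sequence of the composite $\partial\eub{\Gamma}\to\eub{\Gamma}\to\pt$ then matches the periodic algebraic surgery sequence~\eqref{long_exact_sequence_for_cals_upper_(per,s)(A,partial_A))_periodic} for $(X/\Gamma,\partial X/\Gamma)$ term by term, and the five-lemma delivers the desired isomorphism $H_n^{\Gamma}(\eub{\Gamma}\to\pt;\bfL_{\IZ}^{\langle j\rangle})\xrightarrow{\cong}\cals_n^{\per,\langle j\rangle}(X/\Gamma,\partial X/\Gamma)$.

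Assertion~\eqref{the:computing_the_periodic_structure_group:UNil} then follows by combining the first isomorphism of~\eqref{the:computing_the_periodic_structure_group:general_iso} with Theorem~\ref{the:computing_L-groups}\eqref{the:computing_L-groups:eub(Gamma)_relative_terms_as_bigoplus}, producing
\[
\cals_n^{\per,\langle j\rangle}(X/\Gamma,\partial X/\Gamma)\cong\bigoplus_{V\in\calvII}H_n^V(\eub{V}\to\pt;\bfL_{\IZ,w|_V}^{\langle j\rangle}),
\]
and then invoking Remark~\ref{rem:Identification_with_UNil-groups} to identify each summand with $\UNil_n(\IZ;\IZ^{(-1)^d},\IZ^{(-1)^d})$. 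The sign $(-1)^d$ is forced by the hypothesis that $w|_F$ is trivial if $d$ is even and non-trivial if $d$ is odd, applied to the order-two subgroups of $V\cong D_\infty$. Assertion~\eqref{the:computing_the_periodic_structure_group:vanishing} is immediate from~\eqref{the:computing_the_periodic_structure_group:UNil}. The main obstacle is the second isomorphism of~\eqref{the:computing_the_periodic_structure_group:general_iso}: while the comparison of long exact sequences is structurally clear, pinning down the identification of $H_n^\Gamma(\partial\eub{\Gamma}\to\pt;\bfL)$ with the relative $L$-theory of groupoid rings in a way that is natural with respect to the assembly and boundary maps of the surgery sequence requires careful bookkeeping at the spectrum level.
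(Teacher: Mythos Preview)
Your argument is essentially correct and follows the same overall strategy as the paper: verify the hypotheses of Theorem~\ref{the:computing_L-groups} (Carter's result for $j_0=-2$, closure of Farrell--Jones under finite-index overgroups), use~\eqref{the:computing_L-groups:edub_pt} to kill $H_n^\Gamma(\edub{\Gamma}\to\pt;\bfL_{\IZ}^{\langle j\rangle})$, and then deduce parts~\eqref{the:computing_the_periodic_structure_group:UNil} and~\eqref{the:computing_the_periodic_structure_group:vanishing} from~\eqref{the:computing_L-groups:eub(Gamma)_relative_terms_as_bigoplus} and Remark~\ref{rem:Identification_with_UNil-groups}.

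The one genuine difference is in how you establish the second isomorphism of~\eqref{the:computing_the_periodic_structure_group:general_iso}. You set up two long exact sequences (the periodic surgery sequence for $(X/\Gamma,\partial X/\Gamma)$ and the sequence of the composite $\partial\eub{\Gamma}\to\eub{\Gamma}\to\pt$) and invoke the five-lemma. The paper instead uses the formalism of commutative squares from Subsection~\ref{subsec:equivariant_homology_theories}: it writes down a ladder of three squares $\Phi_1,\Phi_2,\Phi_3$ with composite $\Phi$, observes that $\cals_n^{\per,\langle j\rangle}(X/\Gamma,\partial X/\Gamma)=H_n^\Gamma(\Phi;\bfL_{\IZ}^{\langle j\rangle})$ \emph{by definition}, shows $H_n^\Gamma(\Phi_1)=0$ (pushout, excision) and $H_n^\Gamma(\Phi_3)=0$ (Theorem~\ref{the:computing_L-groups}\eqref{the:computing_L-groups:edub_pt}), and reads off both isomorphisms at once from $H_n^\Gamma(\Phi)\cong H_n^\Gamma(\Phi_2)\cong H_n^\Gamma(\Phi_3\circ\Phi_2)$. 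This buys exactly what you flag as a worry at the end of your proof: there is no separate naturality check to perform, because the identification of $\cals_n^{\per}$ with $H_n^\Gamma(\Phi)$ is taken as the definition at the spectrum level, and the comparison maps are the canonical ones attached to composable squares. Your five-lemma route works too, but the bookkeeping you anticipate is real and the square formalism packages it away.
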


  \begin{proof}~\eqref{the:computing_the_periodic_structure_group:general_iso}
    Consider the following sequence of squares $\Phi_1$, $\Phi_2$, and  $\Phi_3$, 
    \[
      \xymatrix{\partial X = \bigcup_{F \in \calm} \Gamma \times_F S_F \ar[r] \ar[d]
        &  X \ar[d]
        \\
        \coprod_{F \in \calm} \Gamma/F \ar[r]  \ar[d]_{\id}
        & \eub{\Gamma}  \ar[d]
        \\
        \coprod_{F \in \calm} \Gamma/F \ar[r] \ar[d]_{\id}
        & \edub{\Gamma}  \ar[d] \ar[d]_{\id}
        \\
        \coprod_{F \in \calm} \Gamma/F \ar[r] 
       &\pt.
     }
   \]
   Let $\Phi = \Phi_3 \circ \Phi_2 \circ \Phi_1$ be the square given by the composition of
   the three squares above.  This implies by inspecting the definition of the algebraic
   structure group of a pair
   \begin{equation}
     \cals_n^{\per,\langle j \rangle}(X/\Gamma,\partial X/\Gamma) = H_n^{\Gamma}(\Phi;\bfL^{\langle j \rangle}_{\IZ,w}).
     \label{cals_n_upper_(per,langle_j_rangle)(Z,partial_Z)}
   \end{equation}
   Since $\Phi_1$ is a $\Gamma$-pushout, we conclude
   $H_n^{\Gamma}(\Phi_1;\bfL^{\langle j \rangle}_{\IZ,w}) = \{0\}$ for all $n \in \IZ$ from
   excision. The ring $\IZ $ is a principal ideal domain and $\Wh_j(H;\IZ) = 0$ holds for
   every finite group $H$ and $j \le -2$, see~\cite{Carter(1980)}. Hence
   Theorem~\ref{the:computing_L-groups}~\eqref{the:computing_L-groups:edub_pt} applies and
   we get $H_n^{\Gamma}(\Phi_3;\bfL^{\langle j \rangle}_{\IZ,w}) = \{0\}$ for all
   $n \in \IZ$.  Therefore we get canonical isomorphisms
   \begin{equation}
     H_n^{\Gamma}(\Phi_2;\bfL^{\langle j \rangle}_{\IZ,w})
     \xrightarrow{\cong} H_n^{\Gamma}(\Phi_3 \circ \Phi_2;\bfL^{\langle j \rangle}_{\IZ,w})
     \xleftarrow{\cong} H_n^{\Gamma}(\Phi;\bfL^{\langle j \rangle}_{\IZ,w}) 
   \label{H_n(phi)_is_H_n(Phi_2)}
 \end{equation}
    for all $n \in \IZ$. There are also  canoncial isomorphisms
   \begin{eqnarray}
     H_n^{\Gamma}(\eub{\Gamma} \to \edub{\Gamma};\bfL^{\langle j \rangle}_{\IZ,w})
     \xrightarrow{\cong}
     H_n^{\Gamma}(\Phi_2;\bfL^{\langle j \rangle}_{\IZ,w});
     \label{phi_2_and_eub_to_edub}
     \\
     H_n^{\Gamma}(\eub{\Gamma} \to \pt;\bfL^{\langle j \rangle}_{\IZ,w})
     \xrightarrow{\cong}
     H_n^{\Gamma}(\Phi_3 \circ \Phi_2;\bfL^{\langle j \rangle}_{\IZ,w}).
     \label{phi_3_circ_phi_2_and_eub_to_pt}
   \end{eqnarray}
   Now put the  isomorphisms~\eqref{cals_n_upper_(per,langle_j_rangle)(Z,partial_Z)},~%
\eqref{H_n(phi)_is_H_n(Phi_2)},~\eqref{phi_2_and_eub_to_edub}, and~\eqref{phi_3_circ_phi_2_and_eub_to_pt}  together.
     \\[1mm]~\eqref{the:computing_the_periodic_structure_group:UNil}
     This follows from assertion~\eqref{the:computing_the_periodic_structure_group:general_iso}, 
    Theorem~\ref{the:computing_L-groups}~\eqref{the:computing_L-groups:eub(Gamma)_relative_terms_as_bigoplus},
    and Remark~\ref{rem:Identification_with_UNil-groups}, since $V \cong D_{\infty}$ holds for every $V \in  \calvII$
    by Lemma~\ref{lem:consequences_of_the_conditions_(M),(NM)_(F)} and $F \cong \IZ/2$ holds for every $F \in \calm$, if $d$ is odd,
    see~\cite[Lemma~3.3~(2)]{Lueck(2022_Poincare_models)}.
     \\[1mm]~\eqref{the:computing_the_periodic_structure_group:vanishing}
     This follows from assertion~\eqref{the:computing_the_periodic_structure_group:UNil}. 
   \end{proof}


\typeout{-------------------- Section: Existence of manifold models --------------------------------}

\section{Existence of manifold models}%
\label{sec:Existence_of_manifold_models}

Let $\Gamma$ be the group appearing in the extension~\eqref{group_extension_intro}.
Let $\calm$ be a complete system of representatives of the conjugacy
classes of maximal  finite subgroups. 

\begin{theorem}[Existence of manifold models]%
\label{the:existence_of_manifold_model} Suppose:

  \begin{itemize}
    \item\label{the:existence_of_manifold_model:d_ge_6}
      The natural number $d$ satisfies $d \ge 5$;
      
            \item\label{the:existence_of_manifold_model:Bpi}
    There exists a closed manifold of dimension $d$, which is homotopy
    equivalent to $B\pi$. Fix a generator  $[B\pi]$ of the
       infinite cyclic group $H_d^{\pi}(E\pi;\IZ^{w|_\pi})$;

      \item\label{the:existence_of_manifold_model:orientaion}  For every $F \in \calm$
      the restriction of the homomorphism $w$ of~\eqref{w_colon_Gamma_to_(pm_1)}
      to $F$ is trivial, if $d$ is even, and is non-trivial, if $d$ is odd;

    \item\label{the:existence_of_manifold_model:(M),(NM),(H)}
      The group $\Gamma$ satisfies conditions (M), (NM), and (H), see
         Definitions~\ref{def:conditions_on_Gamma_intro} and~\ref{definition:(H)};

  \item\label{the:existence_of_manifold_model:eub(Gamma)}
   There exists a finite $\Gamma$-$CW$-model for $\eub{\Gamma}$;

  \item\label{the:existence_of_manifold_model:slice-system}
    There exists an oriented  free $d$-dimensional
    slice system $\cals$ in the sense of Definition~\ref{def:free_d-dimensional_slice_system},
    which satisfies condition (S).
    Fix such a choice;

    \item\label{the:existence_of_manifold_model:periodic_structure_group}
    The group
   $H_{m}^{\Gamma}(\eub{\Gamma} \to \pt;\bfL^{\langle j \rangle}_{\IZ,w})$ vanishes for every $m\in \{d, d+1\}$.
  
 \end{itemize}

  Let  $(X,\partial X)$  be a finite free $\Gamma$-$CW$-pair such that
   $\partial X = \coprod_{F \in \calm} \Gamma \times_F S_F$ holds,
    $X \cup_{\partial X} C(\partial X)$ is a model for $\eub{\Gamma}$, and
    $(X/\Gamma,\partial X/\Gamma)$ is a finite $d$-dimensional Poincar\'e pair.
    (It exists by Theorem~\ref{the:Poincare_models}.)   Then:
 
   \begin{enumerate}
   \item\label{the:existence_of_manifold_model:structure_groups_d}
  The structure groups $\cals_d^{\langle j \rangle}(X/\Gamma,\partial X/\Gamma)$ and
   $\cals_d^{\langle j \rangle}(X/\pi,\partial X/\pi)$ are infinite cyclic and the map induced by restriction with
   $i \colon \pi \to \Gamma$ induces an injection
   \[
     i^* \colon \cals_d^{\langle j \rangle}(X/\Gamma,\partial X/\Gamma)
     \to \cals_d^{\langle j \rangle}(X/\pi,\partial X/\pi);
   \]
   
 \item\label{the:existence_of_manifold_model:structure_groups_manifold_structure_relative}
   There exists a cocompact free $d$-dimensional $\Gamma$-manifold $N$ with boundary
   $\partial N$ together with a $\Gamma$-homotopy equivalence
   $(f,\partial f) \colon (N,\partial N) \xrightarrow{\simeq} (X,\partial X)$ of
   $\Gamma$-pairs;

 \item\label{the:existence_of_manifold_model:structure_groups_manifold_structure_absolute}
   Let $M$ be $N \cup_{\partial N} C(\partial N)$. Then $M$ is a slice manifold model for
   $\eub{\Gamma}$ with $N$ as a slice complement in the sense of
   Definition~\ref{def:slice-manifold_model}. If $\cals' = \{S'_F \mid F \in \calm\}$ is the
   underlying slice manifold system, then $S_F$ and $S_F'$ are $F$-homotopy equivalent for
   every $F \in \calm$.
  \end{enumerate}
  \end{theorem}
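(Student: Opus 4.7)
The plan is to establish assertions (1), (2), and (3) in sequence, with the computation in (1) feeding directly into the application of Theorem~\ref{the:surgery_classification} that yields (2).

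For (1), I apply the octahedral axiom to the cofibration of $\Groupoids$-spectra $\bfL_\IZ^{\langle j\rangle}\langle 1\rangle \to \bfL_\IZ^{\langle j\rangle} \to \overline{\bfL_\IZ^{\langle j\rangle}}$ from Subsection~\ref{subsec:1-connective_cover} together with the assembly map. This produces a long exact sequence comparing the non-periodic and periodic structure groups of the pair:
\[
  \cdots \to \cals^{\per,\langle j\rangle}_{n+1}(X/\Gamma,\partial X/\Gamma) \to H_n(X/\Gamma,\partial X/\Gamma;\overline{\bfL_\IZ^{\langle j\rangle}}) \to \cals_n^{\langle j\rangle}(X/\Gamma,\partial X/\Gamma) \to \cals_n^{\per,\langle j\rangle}(X/\Gamma,\partial X/\Gamma) \to \cdots.
\]
The groups $\cals_d^{\per,\langle j\rangle}(X/\Gamma,\partial X/\Gamma)$ and $\cals_{d+1}^{\per,\langle j\rangle}(X/\Gamma,\partial X/\Gamma)$ vanish by the hypothesis and Theorem~\ref{the:computing_the_periodic_structure_group}, so $\cals_d^{\langle j\rangle}(X/\Gamma,\partial X/\Gamma) \cong H_d(X/\Gamma,\partial X/\Gamma;\overline{\bfL_\IZ^{\langle j\rangle}})$. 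Because $\pi_q(\overline{\bfL_\IZ^{\langle j\rangle}}) = 0$ for $q \geq 1$ while $\pi_0 = L_0(\IZ) = \IZ$, the Atiyah--Hirzebruch spectral sequence together with Poincar\'e--Lefschetz duality for the $d$-dimensional Poincar\'e pair with orientation character $w$ identifies this homology group with $\IZ$. The same argument at the $\pi$-level, using the given closed manifold $M_\pi \simeq B\pi$ to arrange the vanishing of the periodic structure groups for $(X/\pi,\partial X/\pi)$, yields $\cals_d^{\langle j\rangle}(X/\pi,\partial X/\pi) \cong \IZ$. Under these identifications $i^\ast$ becomes the homological transfer on top $w$-twisted homology, which is multiplication by $[\Gamma:\pi] = |G| \neq 0$ and hence injective.

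For (2), I invoke Theorem~\ref{the:surgery_classification}~\eqref{the:surgery_classification:existence} with $Z = X$. The hypotheses are met: $X$ is simply connected, being obtained from the contractible space $\eub{\Gamma}$ by removing the discrete $\Gamma$-invariant set of cone centers (harmless in dimension $d \geq 5$), and $(X/\Gamma,\partial X/\Gamma)$ is a finite $d$-dimensional simple Poincar\'e pair by Theorem~\ref{the:Poincare_models}; the injectivity of the transfer $p^\ast = i^\ast$ is exactly~(1). To produce a $\pi$-level manifold model, I start with the given closed $d$-manifold $M_\pi \simeq B\pi$, remove a finite collection of disjoint open topological $d$-disks, one for each $\pi$-orbit of cone centers of $\eub{\Gamma}$, and pass to the $\pi$-universal cover to obtain a free cocompact $\pi$-manifold $N'$ with boundary. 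After matching the boundary spheres with the prescribed slice system $\cals$ and absorbing any Whitehead torsion into an $h$-cobordism correction, one obtains a simple $\pi$-homotopy equivalence $(N',\partial N') \to (X,\partial X)$ of $\pi$-pairs. Theorem~\ref{the:surgery_classification}~\eqref{the:surgery_classification:existence} then delivers the $\Gamma$-manifold $(N,\partial N)$ together with a simple $\Gamma$-homotopy equivalence to $(X,\partial X)$.

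For (3), set $M := N \cup_{\partial N} C(\partial N)$. The $\Gamma$-homotopy equivalence $(N,\partial N) \to (X,\partial X)$ extends over the cones to a $\Gamma$-homotopy equivalence $M \to X \cup_{\partial X} C(\partial X) = \eub{\Gamma}$, so $M$ is a model for $\eub{\Gamma}$. Decomposing $\partial N$ $\Gamma$-equivariantly gives $\partial N = \coprod_{F\in\calm} \Gamma\times_F S_F'$ where each $S_F'$ is a closed $(d-1)$-manifold with a free $F$-action that is $F$-homotopy equivalent to $S_F$, and hence non-equivariantly homotopy equivalent to $S^{d-1}$; since $d-1 \geq 4$, the topological Poincar\'e conjecture gives $S_F' \cong S^{d-1}$, so $C(S_F')$ is a topological $d$-disk and $M$ is genuinely a topological manifold. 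This realises $M$ as a slice manifold model for $\eub{\Gamma}$ with slice manifold system $\cals' = \{S_F'\}$ and $N$ as a slice complement, and the $F$-homotopy equivalence $S_F \simeq_F S_F'$ is inherent in the construction. The main obstacle lies in part~(2): producing a genuinely \emph{simple} $\pi$-homotopy equivalence $(N',\partial N') \to (X,\partial X)$, which requires careful tracking of Whitehead torsion through the disk-removal construction and an $h$-cobordism adjustment so that the boundary matches $\cals$ up to simple $F$-homotopy.
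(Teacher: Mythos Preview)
Your overall strategy for (1)---compare the non-periodic and periodic structure groups via the cofibration $\bfL\langle 1\rangle \to \bfL \to \overline{\bfL}$ and then compute the $\overline{\bfL}$-term---is the same as the paper's, but the execution has a real gap at the $\pi$-level.

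At the $\Gamma$-level your appeal to Theorem~\ref{the:computing_the_periodic_structure_group} is formally illegitimate, since that theorem assumes $\pi$ is a Farrell-Jones group and $\Gamma$ satisfies~(V$_{\operatorname{II}}$), neither of which is a hypothesis of Theorem~\ref{the:existence_of_manifold_model}. The identification $\cals_n^{\per,\langle j\rangle}(X/\Gamma,\partial X/\Gamma)\cong H_n^\Gamma(\eub{\Gamma}\to\pt;\bfL^{\langle j\rangle}_\IZ)$ can in fact be extracted from the proof of that theorem using only excision for the $\Gamma$-pushout square~$\Phi_1$ (no Farrell-Jones needed), so this part is repairable.

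The genuine problem is your sentence ``using the given closed manifold $M_\pi\simeq B\pi$ to arrange the vanishing of the periodic structure groups for $(X/\pi,\partial X/\pi)$.'' The analogous identification at the $\pi$-level gives $\cals_n^{\per,\langle j\rangle}(X/\pi,\partial X/\pi)\cong H_n^\pi(E\pi\to\pt;\bfL^{\langle j\rangle}_\IZ)$, and this is the cofiber of the assembly map for $\pi$. Its vanishing is precisely an instance of the Farrell-Jones Conjecture for~$\pi$; it is \emph{not} implied by the existence of a closed manifold model for $B\pi$. So your route to $\cals_d^{\langle j\rangle}(X/\pi,\partial X/\pi)\cong\IZ$ breaks down. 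A secondary issue: your identification $\cals_d\cong H_d(X/\Gamma,\partial X/\Gamma;\overline{\bfL})$ via Poincar\'e--Lefschetz duality is wrong when $w$ is nontrivial, since the AHSS edge term is $H_d(X/\Gamma,\partial X/\Gamma;\IZ)$ with \emph{untwisted} coefficients, which is zero in the non-orientable case.

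The paper avoids both pitfalls by never passing through the periodic structure groups at all. It identifies $\cals_d^{\langle j\rangle}(X/\Gamma,\partial X/\Gamma)=H_d^\Gamma(\Psi;\bfL\langle 1\rangle)$ for the square $\Psi$, uses the $\Gamma$-pushout to rewrite this as $H_d^\Gamma(\eub{\Gamma}\to\pt;\bfL\langle 1\rangle)$, applies the vanishing hypothesis in the long exact sequence for $\bfL\langle 1\rangle\to\bfL\to\overline{\bfL}$ to reach $H_{d+1}^\Gamma(\eub{\Gamma}\to\pt;\overline{\bfL})$, and then computes this as Bredon homology $B\!H_d^\Gamma(\eub{\Gamma};\pi_0(\bfL))$. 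The latter is shown to be infinite cyclic by a rationalization argument comparing with $H_d^\pi(E\pi;L_0(\IZ)^{w|_\pi})^G\cong\IZ$; the injectivity of~$i^*$ then comes from the same Bredon-homology comparison, not from vanishing of any periodic structure group at the $\pi$-level.
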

  \begin{proof}~\eqref{the:existence_of_manifold_model:structure_groups_d} In the sequel
    $w \colon \Gamma \to \{\pm 1\}$ is the group
    homomorphism~\eqref{w_colon_Gamma_to_(pm_1)}.  Recall that it agrees with the
    orientation homomorphism $\Gamma \to \{\pm 1\}$ associated to the Poincar\'e pair
    $(X/\Gamma,\partial X/\Gamma)$. Note that we can choose a finite $d$-dimensional
    $\Gamma$-$CW$-complex model for $\eub{\Gamma}$ such that
    $\eub{\Gamma}^{>1} = \coprod_{F \in \calm} \Gamma/F$ holds,
    see~\cite[Theorem~1.12]{Lueck(2022_Poincare_models)}.

    Let $\bfE \colon \Groupoids\to \Spectra$ be a  $\Groupoids$-spectrum.  We
    have introduced in Subsection~\ref{subsec:Equivariant_homology_theories_from_spectra_over_groupoids} the
    $\Groupoids$-spectrum $\bfE\langle 1 \rangle$ obtained from $\bfE$ by passing to the
    $1$-connective covering.  Consider a $\Gamma$-map $f \colon X \to Y$. The equivariant
    Atiyah-Hirzebruch spectral sequence for the $\Gamma$-homology theory associated to
    $\overline{\bfE}$ defined in Subsection~\ref{subsec:Equivariant_homology_theories_from_spectra_over_groupoids}
    has as $E^2$-term $E^2_{p,q}$ the Bredon homology
    $B\!H^{\Gamma}_p(f;\pi_q(\overline{\bfE}))$ and converges to
    $H_{p+q}^{\Gamma}(f;\overline{\bfE})$.  An easy spectral sequence argument yields
    an isomorphism, natural in $f$,
    \begin{equation}
      H^{\Gamma}_{d+1}(f;\overline{\bfE}) \xrightarrow{\cong} B\!H^{\Gamma}_{d+1}(f;\pi_0(\bfE)),
      \label{passage_to_Bredon_homology_general}
    \end{equation}
    provided that $\dim(X) \le d $ and $\dim(Y) \le d + 1$ holds.  If we apply
    equation~\eqref{passage_to_Bredon_homology_general} to $\eub{\Gamma} \to \pt$ for
    $\bfE = \bfL^{\langle j \rangle}_{\IZ,w}$ and write
    $\overline{\bfE} = \overline{\bfL}^{\langle j \rangle}_{\IZ,w}$, we obtain from the long
    exact sequence of the map $\eub{\Gamma} \to \pt$ for Bredon homology an isomorphism
    \begin{equation}
      \partial_{d+1} \colon H^{\Gamma}_{d+1}(\eub{\Gamma} \to \pt;\overline{\bfL}^{\langle j \rangle}_{\IZ,w})
      \xrightarrow{\cong} 
      B\!H^{\Gamma}_{d}(\eub{\Gamma};\pi_0(\bfL^{\langle j \rangle}_{\IZ,w})).
      \label{passage_to_Bredon_homology}
    \end{equation}
    We put
    \[
      \Phi := \raisebox{8mm}{\xymatrix{\partial X\ar[r] \ar[d] & X\ar[d]
          \\
          \pi_0(\partial X) \ar[r] & \eub{\Gamma}}} \quad \quad \quad \text{and} \quad
      \quad \quad \Psi := \raisebox{8mm}{\xymatrix{\partial X\ar[r] \ar[d] & X\ar[d]
          \\
          \pi_0(\partial X) \ar[r] & \pt.}}
    \]
    We have the following long exact sequence
    \begin{multline*}
      \cdots \to H^{\Gamma}_{n+1}(\Phi;\bfE) \to H^{\Gamma}_{n+1}(\Psi;\bfE) \to
      H^{\Gamma}_{n+1}(\eub{\Gamma} \to \pt;\bfE)
      \\
      \to H^{\Gamma}_{n}(\Phi;\bfE) \to H^{\Gamma}_{n}(\Psi;\bfE) \to \cdots.
    \end{multline*}
    Since $\Phi$ is a $\Gamma$-pushout and its upper horizontal arrow is a
    $\Gamma$-cofibration,
    $H^{\Gamma}_{n}(\Phi;\bfL^{\langle j \rangle}_{\IZ,w}\langle 1 \rangle)$ vanishes for
    every $n \in \IZ$. Hence we get for $n \in \IZ$  an isomorphism
    \begin{eqnarray}
      H^{\Gamma}_{n}(\Psi;\bfL^{\langle j \rangle}_{\IZ,w}\langle 1 \rangle)
      \xrightarrow{\cong} H^{\Gamma}_{n}(\eub{\Gamma} \to \pt;\bfL^{\langle j \rangle}_{\IZ,w}\langle 1 \rangle).
      \label{H(Psi)_cong_H_n_upper_Gamma(underline(E)G_to_pt)}
    \end{eqnarray}

    If we apply the long exact
    sequence~\eqref{long_exact_homology_sequence_for_E_to_E_langle_1_rangle_to_overline(E)}
    to $\bfE = \bfL^{\langle j \rangle}_{\IZ,w}$ and $\underline{E} \Gamma \to \pt$ and use
    the assumption that
    $H_{m}^{\Gamma}(\eub{\Gamma} \to \pt;\bfL^{\langle j \rangle}_{\IZ,w})$ vanishes for
    $m = d,d+1$, we get an isomorphism
    \begin{equation}
      \partial_{d+1} \colon H_{d+1}^{\Gamma}(\eub{\Gamma} \to \pt;\overline{\bfL}^{\langle j \rangle}_{\IZ,w})
      \xrightarrow{\cong}
      H_{d}^{\Gamma}(\eub{\Gamma} \to \pt;\bfL^{\langle j \rangle}_{\IZ,w}\langle 1 \rangle).
      \label{underline(E)G_to_pt;overline(L)_to_L_langle_1_rangle}
    \end{equation}
    Combining~\eqref{passage_to_Bredon_homology},~\eqref{H(Psi)_cong_H_n_upper_Gamma(underline(E)G_to_pt)}
    and~\eqref{underline(E)G_to_pt;overline(L)_to_L_langle_1_rangle} yields an isomorphism
    \begin{equation}
      H^{\Gamma}_{d}(\Psi;\bfL^{\langle j \rangle}_{\IZ,w}\langle 1 \rangle)
      \xrightarrow{\cong}
      B\!H^{\Gamma}_{d}(\eub{\Gamma};\pi_0(\bfL^{\langle j \rangle}_{\IZ,w})).
      \label{from_H(Psi)_to_BH(underline(E)G}
    \end{equation}

    Since the image of the inclusion $i \colon \pi \to \Gamma$ has finite index, there are
    natural restrictions functors
    \begin{eqnarray*}
      i^* \colon H_d^{\Gamma}(\Psi,\bfL^{\langle - j \rangle}_{\IZ,w}\langle 1 \rangle)
      & \to &
              H_d^{\pi}(i^* \Psi,\bfL^{\langle - j \rangle}_{\IZ,w}\langle 1 \rangle);
      \\
      i^* \colon H_d^{\Gamma}(\Psi,\bfL^{\langle - j \rangle}_{\IZ,w})
      & \to &
              H_d^{\pi}(i^* \Psi,\bfL^{\langle - j \rangle}_{\IZ,w});
      \\
      i^* \colon H_d^{\Gamma}(\Psi,\overline{\bfL}^{\langle - j \rangle}_{\IZ,w})
      & \to &
              H_d^{\pi}(i^* \Psi,\overline{\bfL}^{\langle - j \rangle}_{\IZ,w}),
    \end{eqnarray*}
    and analogously for Bredon homology
    \[
      i^* \colon B\!H^{\Gamma}_{d}(\Psi;\pi_0(\bfL^{\langle j \rangle}_{\IZ,w})) \to
      B\!H^{\pi}_{d}(i^*\Psi;\pi_0(\bfL^{\langle j \rangle}_{\IZ,w})).
    \]
    These are compatible with the exact
    sequence~\eqref{long_exact_homology_sequence_for_E_to_E_langle_1_rangle_to_overline(E)}
    and the isomorphism~\eqref{passage_to_Bredon_homology_general} for
    $\bfE = \bfL^{\langle - j \rangle}_{\IZ,w}$. Moreover, they are compatible with the
    isomorphisms~%
\eqref{passage_to_Bredon_homology},~\eqref{H(Psi)_cong_H_n_upper_Gamma(underline(E)G_to_pt)}~%
\eqref{underline(E)G_to_pt;overline(L)_to_L_langle_1_rangle}
    and~\eqref{from_H(Psi)_to_BH(underline(E)G} as well.  In particular, we get a
    commutative square whose vertical arrows are isomorphisms
    \begin{equation}
      \xymatrix{H^{\Gamma}_{d}(\Psi;\bfL^{\langle j \rangle}_{\IZ,w}\langle 1 \rangle)
        \ar[d]_{\cong} \ar[r]^-{i^*}
        &
        H^{\pi}_{d}(i^*\Psi;\bfL^{\langle j \rangle}_{\IZ,w}\langle 1 \rangle)
        \ar[d]_{\cong}
        \\
        B\!H^{\Gamma}_{d}(\eub{\Gamma};\pi_0(\bfL^{\langle j \rangle}_{\IZ,w}))
        \ar[r]_-{i^*}
        &
        B\!H^{\pi}_{d}(i^*\eub{\Gamma};\pi_0(\bfL^{\langle j \rangle}_{\IZ,w})).
      }
      \label{from_H(Psi)_to_BH(underline(E)G_with_restrict}
    \end{equation}

    Since $\eub{\Gamma}$ is a finite $\Gamma$-$CW$- complex such that
    $\dim(\eub{\Gamma}) \le d$ and all it cells in dimension $d$ have trivial isotropy
    group, $B\!H^{\Gamma}_{d}(\eub{\Gamma};\pi_0(\bfL^{\langle j \rangle}_{\IZ,w}))$ is an
    abelian subgroup of a finite direct sum of copies of
    $L_0^{\langle j \rangle}(\IZ) \cong L_0(\IZ) \cong \IZ$.  We conclude that there is a
    natural number $r$ satisfying
    \begin{equation}
      B\!H^{\Gamma}_{d}(\eub{\Gamma};\pi_0(\bfL^{\langle j \rangle}_{\IZ,w}))
      \cong \IZ^r.
      \label{B!H_upper_(Gamma)_n(underline(E)Gamma;pi_0(bfL_upper_langle_j_rangle)_Z)_cong_Z_upper_r}
    \end{equation}

    Consider the following commutative diagram

\[
  \xymatrix@!C=18em{ B\!H^{\Gamma}_{d}(\eub{\Gamma};\pi_0(\bfL^{\langle j \rangle}_{\IZ,w}))
    \ar[d]^{\cong} \ar[r]_-{i^*} & B\!H^{\pi}_{d}(i^*\eub{\Gamma};\pi_0(\bfL^{\langle j
      \rangle}_{\IZ,w})) \ar[d]_{\cong}
    \\
    B\!H^{\Gamma}_{d}(\coprod_{F \in \calm} \Gamma/F \to \eub{\Gamma};\pi_0(\bfL^{\langle
      j \rangle}_{\IZ,w})) \ar[r]_-{i^*} & B\!H^{\pi}_{d}(\coprod_{F \in \calm} i^* \Gamma/F
    \to i^*\eub{\Gamma};\pi_0(\bfL^{\langle j \rangle}_{\IZ,w}))
    \\
    B\!H^{\Gamma}_{d}(\coprod_{F \in \calm} \Gamma\times_F EF \to
    E\Gamma;\pi_0(\bfL^{\langle j \rangle}_{\IZ,w})) \ar[r]_-{i^*} \ar[u]_{\cong} &
    B\!H^{\pi}_{d}(\coprod_{F \in \calm} i^* (\Gamma \times_F EF) \to
    i^*E\Gamma;\pi_0(\bfL^{\langle j \rangle}_{\IZ,w})).\ar[u]_{\cong}  }
\]
We have the long exact sequence
\begin{multline*}
  \cdots \to B\!H^{\Gamma}_{d}(\coprod_{F \in \calm} \Gamma\times_F EF;\pi_0(\bfL^{\langle
    j \rangle}_{\IZ,w})) \to B\!H^{\Gamma}_{d}(E\Gamma;\pi_0(\bfL^{\langle j
    \rangle}_{\IZ,w})) \to
  \\
  \to B\!H^{\Gamma}_{d}(\coprod_{F \in \calm} \Gamma\times_F EF \to
  E\Gamma;\pi_0(\bfL^{\langle j \rangle}_{\IZ,w})) \to
  \\
  \to B\!H^{\Gamma}_{d-1}(\coprod_{F \in \calm} \Gamma\times_F EF;\pi_0(\bfL^{\langle j
    \rangle}_{\IZ,w})) \to B\!H^{\Gamma}_{d-1}(E\Gamma;\pi_0(\bfL^{\langle j
    \rangle}_{\IZ,w})) \to \cdots.
\end{multline*}
Since $\Gamma$ acts freely on $\Gamma \times_F EF$ and $E\Gamma$ and
$L^{\langle j \rangle}_0(\IZ)$ is independent of the decoration $j$, we get
identifications
\begin{eqnarray*}
  B\!H^{\Gamma}_{k}(\coprod_{F \in \calm} \Gamma\times_F EF;\pi_0(\bfL^{\langle j \rangle}_{\IZ,w}))
  &\cong &
           \bigoplus_{F \in \calm} H_k^{F}(EF;L_0(\IZ)^{w|_F})
  \\
  B\!H^{\Gamma}_{k}(E\Gamma;\pi_0(\bfL^{\langle j \rangle}_{\IZ,w}))
  &\cong &
           H_{k}^{\Gamma}(E\Gamma;L_0(\IZ)^w).
\end{eqnarray*}
Since $F$ is a finite group, we get for $k \ge 1$
\[
  H_k^{F}(EF;L_0(\IZ)^{w|_F})_{(0)} = 0,
\]
where for any abelian group $A$ we denote by $A_{(0)} := \IQ \otimes_{\IZ,w} A$ its
rationalization.  Hence we obtain an isomorphism
\[
  B\!H^{\Gamma}_{d}(\eub{\Gamma};\pi_0(\bfL^{\langle j \rangle}_{\IZ,w}))_{(0)} \cong
  H_{d}^{\Gamma}(E\Gamma;L_0(\IZ)^w)_{(0)}.
\]
Since we have the exact sequence $1 \to \pi \xrightarrow{i} \Gamma \to G \to 1$, we
conclude from the Leray-Serre spectral sequence that restriction with $i$ yields an
isomorphism
\[
  i^*_{(0)} \colon H_{d}^{\Gamma}(E\Gamma;L_0(\IZ)^w)_{(0)} \xrightarrow{\cong}
  H_{d}^{\pi}(E\pi;L_0(\IZ)^{w|_{\pi}})^G_{(0)}.
\]
By Poincar\'e duality we obtain an isomorphism
\[
  H_{d}^{\pi}(E\pi;L_0(\IZ)^{w|_{\pi}}) \cong   H^0_{\pi}(E\pi;L_0(\IZ)) \cong H^0(B\pi;L_0(\IZ)) \cong L_0(\IZ),
  \]
Moreover, the $G$-action on $H_{d}^{\pi}(E\pi;L_0(\IZ)^w)_{(0)}$ is
trivial by a direct inspection, cf.~\cite[Proof of Lemma~6.15]{Lueck(2022_Poincare_models)}.
Hence we get an isomorphism
\begin{equation}
  B\!H^{\Gamma}_{d}(\eub{\Gamma};\pi_0(\bfL^{\langle j \rangle}_{\IZ,w}))_{(0)}
  \xrightarrow{\cong} L_0(\IZ)_{(0)} \cong \IQ.
  \label{BH_upper_(Gamma)_n(underline(E)Gamma;pi_0(bfL_upper_langle_j_rangle)_Z)_(0)}
\end{equation}
Combining~\eqref{B!H_upper_(Gamma)_n(underline(E)Gamma;pi_0(bfL_upper_langle_j_rangle)_Z)_cong_Z_upper_r}
and~\eqref{BH_upper_(Gamma)_n(underline(E)Gamma;pi_0(bfL_upper_langle_j_rangle)_Z)_(0)}
yields
\begin{equation}
  B\!H^{\Gamma}_{d}(\eub{\Gamma};\pi_0(\bfL^{\langle j \rangle}_{\IZ,w}))
  \cong \IZ.
  \label{B!H_upper_(Gamma)_n(underline(E)Gamma;pi_0(bfL_upper_langle_j_rangle)_Z)_cong_Z}
\end{equation}
The following diagram commutes
\[\xymatrix{H_{d}^{\Gamma}(E\Gamma;L_0(\IZ)^w)_{(0)}  \ar[r]^{i^*} \ar[d] & H_{d}^{\pi}(E\pi;L_0(\IZ)^{w|_{\pi}})
    \ar[d]^{\cong}
    \\
    B\!H^{\Gamma}_{d}(\eub{\Gamma};\pi_0(\bfL^{\langle j \rangle}_{\IZ,w})) \ar[r]^{i^*} &
    B\!H^{\pi}_{d}(i^*\eub{\Gamma};\pi_0(\bfL^{\langle j \rangle}_{\IZ,w})). }
\]
The upper horizontal arrow, the left vertical arrow, and the right vertical arrow are bijective after rationalizing.
Hence the lower horizontal arrow is bijective after rationalizing.  Since its source and
target consists of infinite cyclic groups, the map
\[i^* \colon B\!H^{\Gamma}_{d}(\eub{\Gamma};\pi_0(\bfL^{\langle j \rangle}_{\IZ,w})) \to
  B\!H^{\pi}_{d}(i^*\eub{\Gamma};\pi_0(\bfL^{\langle j \rangle}_{\IZ,w}))
\]
is injective.  We conclude from~\eqref{from_H(Psi)_to_BH(underline(E)G_with_restrict} that
the map
\[
  i^* \colon H^{\Gamma}_{d}(\Psi;\bfL^{\langle j \rangle}_{\IZ,w}\langle 1 \rangle) \to
  H^{\pi}_{d}(i^*\Psi;\bfL^{\langle j \rangle}_{\IZ,w}\langle 1 \rangle)
\]
is injective. This map can be identified with
$i^* \colon \cals_d^{\langle j \rangle}(X/\Gamma,\partial X/\Gamma) \to \cals_d^{\langle j
  \rangle}(X/\pi,\partial X/\pi)$,
c.f.~\eqref{cals_n_upper_(per,langle_j_rangle)(Z,partial_Z)}.  This finishes the proof of
assertion\eqref{the:existence_of_manifold_model:structure_groups_d}.
\\[1mm]~\eqref{the:existence_of_manifold_model:structure_groups_manifold_structure_relative}
This follows from assertion~\eqref{the:existence_of_manifold_model:structure_groups_d} and
Theorem~\ref{the:surgery_classification}. In dimension $d = 5$ we use that fact that on
the boundary, which is $4$-dimensional, all $\Gamma$-components are induced from the finite
group $F$, which is good in the sense of Freedman.
\\[1mm]~\eqref{the:existence_of_manifold_model:structure_groups_manifold_structure_absolute}
By
assertion~\eqref{the:existence_of_manifold_model:structure_groups_manifold_structure_relative}
we can choose a free cocompact $d$-dimensional $\Gamma$-manifold $N$ with boundary
$\partial N$ together with a $\Gamma$-homotopy equivalence
$(f,\partial f) \colon (N,\partial N) \xrightarrow{\simeq} (X,\partial X)$ of
$\Gamma$-pairs.  Because the Poincar\'e Conjecture is known to be true, there exists a
$d$-dimensional free slice manifold system $\cals' = \{S'_F \mid F \in \calm\}$ with
$\partial N = \coprod_{F \in \calm} \Gamma \times_F S'_F$.  Let $D_F'$ be the cone of
$S_F'$.  Define the $\Gamma$-spaces $M$ and $X \cup_{\partial X} C(\partial X)$ by the
$\Gamma$-pushouts
\[
  \xymatrix{\partial N = \coprod_{F \in \calm} \Gamma \times_F S'_F \ar[r] \ar[d] & N
    \ar[d]
    \\
    C(\partial N) = \coprod_{F \in \calm} \Gamma \times_F D'_F \ar[r] & M}
\]
and
\[
  \xymatrix{\partial X = \coprod_{F \in \calm} \Gamma \times_F S_F \ar[r] \ar[d] & X
    \ar[d]
    \\
    C(\partial X) = \coprod_{F \in \calm} \Gamma \times_F D_F \ar[r] & X \cup_{\partial X}
    C(\partial X).}
\]
Obviously the $\Gamma$-homotopy equivalence
$\partial f \colon \coprod_{F \in \calm} \Gamma \times_F S'_F = \partial N \to \Gamma
\times_F S_F = \partial X$ extends to a $\Gamma$-homotopy equivalence
$\widehat{\partial f} \colon \coprod_{F \in \calm} \Gamma \times_F D'_F \to \Gamma
\times_F D_F$.  Let $F \colon M \to X \cup_{\partial X} C(\partial X) $ be the
$\Gamma$-map given by the $\Gamma$-pushout of the three $\Gamma$-homotopy equivalences
$\widehat{\partial f}$, $\partial f$ and $f$. Then $F$ itself is a $\Gamma$-homotopy
equivalence.  The $\Gamma$-space $X \cup_{\partial X} C(\partial X) $ is a
$\Gamma$-$CW$-model for $\eub{\Gamma}$ by assumption. This finishes the proof of
Theorem~\ref{the:existence_of_manifold_model}.
\end{proof}


\typeout{-------------------- Section: Uniqueness of manifold models --------------------------------}

\section{Uniqueness of manifold models}%
\label{sec:Uniqueness_of_manifold_models}

Let $\Gamma$ be the group appearing in the extension~\eqref{group_extension_intro}.
Let $\calm$ be a complete system of representatives of the conjugacy
classes of maximal  finite subgroups.

\begin{theorem}[Uniqueness of manifold models]%
\label{the:uniqueness_of_manifold_models} Suppose:

  \begin{itemize}
    
   \item The natural number $d$ satisfies $d \ge 5$;

   \item $\Gamma$  satisfies  condition (V$_{\operatorname{II}})$,
      see Definition~\ref{def:conditions_on_Gamma_intro};

    \item   The group $\pi $ is a Farrell-Jones group, see
      Subsection~\ref{subsec:The_Full_Farrell_Jones_Conjecture};

    \item The group $\UNil_{d+1}(\IZ;\IZ^{(-1)^d},\IZ^{(-1)^d})$ vanishes or $\Gamma$
      contains no subgroup isomorphic to $D_{\infty}$.
      \end{itemize}

  Let $M$ and $M'$ be two slice manifold models for $\eub{\Gamma}$ with respect to the
  $d$-dimensional free slice manifold systems $\cals = \{S_F \mid F\in \calm\}$ and
  $\cals' = \{S'_F \mid F\in \calm\}$ in the sense of
  Definition~\ref{def:slice-manifold_model}. Let $N$ and $N'$ be slice complements, i.e.,
  cocompact proper proper free $d$-dimensional $\Gamma$-manifolds with boundary such that there
  are $\Gamma$-pushouts
\[\xymatrix{\partial N = \coprod_{F \in \calm} \Gamma \times_F S_F \ar[r]\ar[d]
    &
    N \ar[d]
    \\
    C(\partial N)
    \ar[r]
    &
    M
  }
   \quad \quad \raisebox{-7mm}{\textup{and}} \quad \quad 
  \xymatrix{\partial N' = \coprod_{F \in \calm} \Gamma \times_F S'_F \ar[r]\ar[d]
    &
    N \ar[d]
    \\
    C(\partial N')
    \ar[r]
    &
    M'
  }
\]
where we abbreviate $C(\partial N) := \coprod_{F \in \calm} \Gamma \times_F D_F$
and $C(\partial N') := \coprod_{F \in \calm} \Gamma \times_F D'_F$.

    Then:
 
    \begin{enumerate}

    \item\label{the:Uniqueness_of_manifold_models:periodic_structure_group_(d_plus_1)_vanishes}
      The group
      $H_{d+1}^{\Gamma}(\eub{\Gamma} \to \pt;\bfL^{\langle j \rangle}_{\IZ,w})$ vanishes for
      every  $j \in \{2,1,0,-1, \ldots \} \amalg \{-\infty\}$;
      
    \item\label{the:Uniqueness_of_manifold_models:structure_group_(d_plus_1)_vanishes}
      The structure group $\cals_{d+1}^{\langle j \rangle}(N'/\Gamma,\partial N'/\Gamma)$
      vanishes for $j \in \{2,1,0,-1,\ldots\} \amalg \{-\infty\}$;

      \item\label{the:Uniqueness_of_manifold_models:homeo_and_h-cob}
     We have
     \begin{enumerate}

       \item\label{the:Uniqueness_of_manifold_models:homeo_and_h-cob:slices_Gamma-h-cobordant}
       For every $F \in F$ there exists an $F$-equivariant $h$-cobordism between $S_F$ and $S_F'$;

     \item\label{the:Uniqueness_of_manifold_models:homeo_and_h-cob:homeomorphism}
     There exists a $\Gamma$-homeomorphism $f \colon M \to M'$;

     \end{enumerate}
     
    \item\label{the:Uniqueness_of_manifold_models:uniqueness_of_slice_complements}
      \begin{enumerate}
        \item\label{the:Uniqueness_of_manifold_models:uniqueness_of_slice_complements:homeo_implies_simple}
        Every $\Gamma$-homeomorphism of cocompact proper free $\Gamma$-manifolds with boundary
    $(N,\partial N) \to (N',\partial N')$ is a simple $\Gamma$-homotopy equivalence  of $\Gamma$-$CW$-pairs;

  \item\label{the:Uniqueness_of_manifold_models:uniqueness_of_slice_complements:simple_implies_homeo}
    Every simple $\Gamma$-homotopy equivalence 
     $(N,\partial N) \to (N',\partial N')$  of $\Gamma$-$CW$-pairs
     is $\Gamma$-homotopic to a $\Gamma$-homeomorphism
     $(N,\partial N) \to (N',\partial N')$  of cocompact proper free $\Gamma$-manifolds with boundary;

   \end{enumerate}

   \item\label{the:Uniqueness_of_manifold_models:uniqueness_of_slice_complements_special}
     The following assertions are equivalent, if we additionally assume that
     for all $F \in \calm$ the $2$-Sylow subgroup of $F$ is cyclic;

       \begin{enumerate}
       \item\label{the:Uniqueness_of_manifold_models:uniqueness_of_slice_complements_special:homeo_pairs}
         There exists a $\Gamma$-homeomorphism of cocompact proper free $\Gamma$-manifolds with
         boundary
         $(h,\partial h) \colon (N,\partial N) \xrightarrow{\cong} (N',\partial N')$ such
         that $\partial h$ induces for each $F \in \calm$ a $F$-homeomorphism
         $\partial h_F \colon S_F \to S_F'$ satisfying
         $\partial h = \coprod_{F \in \calm} \id_{\Gamma} \times_F~\partial h_F$;
    
        \item\label{the:Uniqueness_of_manifold_models:uniqueness_of_slice_complements_special_simple_pairs}
         There exists a simple $\Gamma$-homotopy equivalence 
         $(f,\partial f) \colon (N,\partial N) \xrightarrow{\simeq_s} (N',\partial N')$  of $\Gamma$-$CW$-complexes
         such that such that $\partial f$  induces for each $F \in \calm$ a simple $F$-homotopy equivalence
         $\partial f_F \colon S_F \to S_F'$ satisfying
         $\partial f = \coprod_{F \in \calm} \id_{\Gamma} \times_F~\partial f_F$;

   \item\label{the:Uniqueness_of_manifold_models:uniqueness_of_slice_complements_special:homeo_S_F}
     For every $F \in F$ there exists a $F$-homeomorphism of cocompact proper free $F$-manifolds 
     $S_F \xrightarrow{\cong} S'_F$;

      \item\label{the:Uniqueness_of_manifold_models:uniqueness_of_slice_complements_special:simple_S_F}
     For every $F \in F$ there exists a simple $F$-homotopy equivalence of finite free $\Gamma$-$CW$-complexes
     $S_F \xrightarrow{\simeq_s}  S'_F$.

   \end{enumerate}

    \end{enumerate}
\end{theorem}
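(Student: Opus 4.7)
The approach is to combine the $L$-theoretic computation of Theorem~\ref{the:computing_L-groups} with the dimension-counting $1$-connective-cover argument from the proof of Theorem~\ref{the:existence_of_manifold_model}, and then feed the resulting vanishing of structure groups into Theorem~\ref{the:surgery_classification}. As a preliminary, since $M$ is a slice manifold model the $\Gamma$-action is pseudo-free, so Lemma~\ref{lem:consequences_of_the_conditions_(M),(NM)_(F)} supplies~(M) and~(NM); moreover $\Gamma$ is Farrell-Jones as a finite-index overgroup of $\pi$. Together with~(V$_{\operatorname{II}}$), the regularity of $\IZ$, and Carter's vanishing of $\Wh_j(F;\IZ)$ for $j \le -2$, this gives all hypotheses for Theorem~\ref{the:computing_L-groups}. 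For assertion~(1), parts~\eqref{the:computing_L-groups:eub(Gamma)_and_edub(Gamma)} and~\eqref{the:computing_L-groups:eub(Gamma)_relative_terms_as_bigoplus} of that theorem, combined with Remark~\ref{rem:Identification_with_UNil-groups}, identify
\[
  H_{d+1}^{\Gamma}(\eub{\Gamma} \to \pt;\bfL^{\langle j \rangle}_{\IZ}) \;\cong\; \bigoplus_{V \in \calvII} \UNil_{d+1}(\IZ;\IZ^{(-1)^d},\IZ^{(-1)^d}),
\]
and this vanishes either because $\calvII$ is empty (no $D_{\infty}$ subgroup, via Lemma~\ref{lem:consequences_of_the_conditions_(M),(NM)_(F)}) or by the direct $\UNil$-hypothesis.

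For assertion~(2), I take $(X,\partial X) = (N',\partial N')$, which is already a free cocompact $\Gamma$-manifold pair and hence a simple Poincar\'e pair. The squares $\Phi,\Psi$ and the pushout reduction from the proof of Theorem~\ref{the:existence_of_manifold_model} give the identification
\[
  \cals^{\langle j \rangle}_{d+1}(N'/\Gamma,\partial N'/\Gamma) \;\cong\; H^{\Gamma}_{d+1}(\eub{\Gamma} \to \pt;\bfL^{\langle j \rangle}_{\IZ}\langle 1 \rangle).
\]
The cofiber sequence $\bfL^{\langle j \rangle}_{\IZ}\langle 1 \rangle \to \bfL^{\langle j \rangle}_{\IZ} \to \overline{\bfL}^{\langle j \rangle}_{\IZ}$ sandwiches this group between $H^{\Gamma}_{d+2}(\eub{\Gamma} \to \pt;\overline{\bfL}^{\langle j \rangle}_{\IZ})$ and $H^{\Gamma}_{d+1}(\eub{\Gamma} \to \pt;\bfL^{\langle j \rangle}_{\IZ})$. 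The latter vanishes by assertion~(1); the former is detected by the equivariant Atiyah--Hirzebruch $E^2$-page, whose only potential contributions to total degree $d+2$ are Bredon groups $B\!H^{\Gamma}_p(\eub{\Gamma} \to \pt;\pi_q(\bfL^{\langle j \rangle}_{\IZ}))$ with $p \ge d+2$ (since $\pi_q(\overline{\bfL})=0$ for $q \ge 1$), and all such groups vanish by the long exact sequence of the pair together with $\dim \eub{\Gamma} \le d$.

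Assertions~(3) and~(4) then follow formally from Theorem~\ref{the:surgery_classification}. Since $M$ and $M'$ are both models for $\eub{\Gamma}$, there is a $\Gamma$-homotopy equivalence $M \to M'$ which (after a $\Gamma$-homotopy fixing the cone points) restricts to a $\Gamma$-homotopy equivalence of pairs $(N,\partial N) \to (N',\partial N')$. Applying Theorem~\ref{the:surgery_classification}~\eqref{the:surgery_classification:decoration_h} in the $h$-decoration, using the vanishing from~(2), produces a $\Gamma$-$h$-cobordism whose $\partial_2$-piece yields the $F$-$h$-cobordisms of~(3a) and whose ends are $\Gamma$-homeomorphic; gluing the specified cones across $\partial N, \partial N'$ extends this to the $\Gamma$-homeomorphism $M \to M'$ of~(3b). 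Part~(4a) is the topological invariance of Whitehead torsion applied to the finite cover $N/\pi \to N'/\pi$; part~(4b) is Theorem~\ref{the:surgery_classification}~\eqref{the:surgery_classification:uniqueness} with the $s$-decoration, again using~(2).

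The main obstacle is assertion~(5). The formal implications (a)$\Rightarrow$(b) (topological invariance of torsion), (a)$\Rightarrow$(c) and (b)$\Rightarrow$(d) (restriction to the components $\Gamma \times_F S_F$ of the boundary), and (d)$\Rightarrow$(b) (extending a boundary equivalence across $N$, with the obstruction living in $\cals^s_{d+1}(N'/\Gamma,\partial N'/\Gamma)=0$) are all handled by the machinery already in place. The essential difficulty, and the reason for assuming the $2$-Sylow subgroup of each $F \in \calm$ is cyclic, lies in the implication (d)$\Rightarrow$(c): realizing a simple $F$-homotopy equivalence of free spherical space forms $S_F \to S_F'$ by an $F$-homeomorphism. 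The cyclic $2$-Sylow hypothesis forces the Tate cohomology of the relevant finite-group Whitehead groups to vanish in the governing degrees, so that the Reidemeister-torsion classification of free $F$-actions on $S^{d-1}$ (in the tradition of Atiyah--Bott, Milnor, and Bak) reduces equivariant homeomorphism to simple equivariant homotopy equivalence. At this step I expect to import and adapt the corresponding arguments of~\cite{Connolly-Davis-Khan(2014H1), Connolly-Davis-Khan(2015)}.
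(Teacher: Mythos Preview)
Your treatment of assertions~(1) and~(2) is essentially the paper's argument, and assertion~(4) is fine. The real problems are in~(3b) and~(5).

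For~(3b) there is a genuine gap. The $\Gamma$-$h$-cobordism $(W,\partial W)$ you obtain from Theorem~\ref{the:surgery_classification}~\eqref{the:surgery_classification:decoration_h} does \emph{not} have $\Gamma$-homeomorphic ends: an $h$-cobordism with nonzero Whitehead torsion is not a product, and nothing in your argument controls $\tau(N \hookrightarrow W) \in \Wh(\Gamma)$. The paper's fix has two ingredients you are missing. First, Theorem~\ref{the:computing_K-groups} gives $\Wh(\Gamma) \cong \bigoplus_{F \in \calm} \Wh(F)$, so the torsion of the big $h$-cobordism can be realised entirely by $F$-$h$-cobordisms $V_F$ along the boundary components; after subtracting these, one gets a \emph{simple} $\Gamma$-$h$-cobordism between $N \cup_{\partial N}(\coprod_F \Gamma \times_F V_F)$ and $N'$ relative $\partial N'$, hence a $\Gamma$-homeomorphism by the $s$-cobordism theorem. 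Second, to pass from $N \cup W$ to $M$ one uses an Eilenberg swindle: iterating $W$ and an inverse $h$-cobordism $W^-$ along $\partial N \times [0,\infty)$ shows $W \cup_{\partial N'} C(\partial N') \cong C(\partial N)$ relative $\partial N$ after one-point compactification of each component. Your ``gluing the specified cones'' does not substitute for either step.

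For~(5), you have misplaced the role of the cyclic $2$-Sylow hypothesis and misdescribed the hard implication. The implication (d)$\Rightarrow$(c) you single out is \emph{false} by the route you propose: simple $F$-homotopy equivalent free $F$-spheres need not be $F$-homeomorphic (fake lens spaces exist), so the Reidemeister-torsion/space-form classification does not give what you claim. The paper instead closes the cycle via (d)$\Rightarrow$(b)$\Rightarrow$(a)$\Rightarrow$(c). The cyclic $2$-Sylow hypothesis enters in (d)$\Rightarrow$(b): by~\cite[Lemma~3.3~(5)]{Lueck(2022_Poincare_models)} it forces \emph{every} $F$-homotopy equivalence $S_F \to S'_F$ to be simple, and then~\cite[Theorems~9.1 and~10.3]{Lueck(2022_Poincare_models)} extend the boundary equivalence to a simple $\Gamma$-homotopy equivalence of the pair $(N,\partial N) \to (N',\partial N')$. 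Your claim that the extension obstruction ``lives in $\cals^s_{d+1}(N'/\Gamma,\partial N'/\Gamma)$'' is not correct: the structure group measures uniqueness of manifold structures on a fixed Poincar\'e pair, not extendability of a boundary map to the interior.
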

\begin{proof}~\eqref{the:Uniqueness_of_manifold_models:periodic_structure_group_(d_plus_1)_vanishes}
  The existence of a slice model $M'$ implies the following facts.  We conclude
  from~\cite[Lemma~1.9] {Lueck(2022_Poincare_models)}, which directly extends to the case,
  where $w$ is non-trivial, that $\Gamma$ satisfies conditions (M), (NM), and (H), see
Definitions~\ref{def:conditions_on_Gamma_intro} and~\ref{definition:(H)}, and that there is a finite
  $d$-dimensional $\Gamma$-$CW$-complex model for $\eub{\Gamma}$ such that
  $\eub{\Gamma}^{>1} = \coprod_{F \in \calm} \Gamma/F$ holds. Moreover, there is a closed
  manifold model for $B\pi$ and the orientation character $w \colon \Gamma \to \{\pm 1\}$
  has for every $F \in \calm$ the property that $w|_F$ is trivial, if $d$ is even, and is
  non-trivial, if $d$ is odd, see~\cite[Subsection~6.2]{Lueck(2022_Poincare_models)}.  Now
  apply Theorem~\ref{the:computing_the_periodic_structure_group}~%
  \eqref{the:computing_the_periodic_structure_group:vanishing}.
  \\[1mm]~\eqref{the:Uniqueness_of_manifold_models:structure_group_(d_plus_1)_vanishes} We
  proceed as in the proof of Theorem~\ref{the:existence_of_manifold_model}~%
  \eqref{the:existence_of_manifold_model:structure_groups_d} An easy spectral sequence
  argument shows that we get from
  assertion~\eqref{the:Uniqueness_of_manifold_models:periodic_structure_group_(d_plus_1)_vanishes}
  an isomorphism
\begin{equation}
 H^{\Gamma}_{d+2}(\eub{\Gamma} \to \pt;\overline{\bfL}^{\langle j \rangle}_{\IZ,w}) \ =  \{0\}.
 \label{vanishing_of_H_(d_plus_2)(eub(Gamma)_to_pt;overline(L)}
\end{equation}
We put
\[
  \Phi
 := 
   \raisebox{8mm}{\xymatrix{\partial N'\ar[r] \ar[d]
    & N'\ar[d]
    \\
    \pi_0(\partial N')  \ar[r]
    & \eub{\Gamma}}}
  \quad \quad \quad \text{and} \quad \quad  \quad 
  \Psi
   := 
   \raisebox{8mm}{\xymatrix{\partial N'\ar[r] \ar[d]
    & N'\ar[d]
    \\
    \pi_0(\partial N')  \ar[r]
    & \pt.}}
\]
We have the following long exact sequence
\begin{multline*}
  \cdots \to H^{\Gamma}_{n+1}(\Phi;\bfL^{\langle j \rangle}_{\IZ,w}\langle 1 \rangle)
  \to H^{\Gamma}_{n+1}(\Psi;\bfL^{\langle j \rangle}_{\IZ,w}\langle 1 \rangle)
  \to  H^{\Gamma}_{n+1}(\eub{\Gamma} \to \pt;\bfL^{\langle j \rangle}_{\IZ,w}\langle 1 \rangle)
  \\
  \to H^{\Gamma}_{n}(\Phi;\bfL^{\langle j \rangle}_{\IZ,w}\langle 1 \rangle)
  \to H^{\Gamma}_{n}(\Psi;\bfL^{\langle j \rangle}_{\IZ,w}\langle 1 \rangle)
  \to  \cdots.
\end{multline*}
Since $\Phi$ is a $\Gamma$-pushout and its upper horizontal arrow is a $\Gamma$-cofibration,
$H^{\Gamma}_{n}(\Phi;\bfL^{\langle j \rangle}_{\IZ,w}\langle 1 \rangle)$ vanishes for every
$n \in \IZ$. Hence we get for  $n \in \IZ$ an isomorphism
\begin{eqnarray}
H^{\Gamma}_{n}(\Psi;\bfL^{\langle j \rangle}_{\IZ,w}\langle 1 \rangle)
  \xrightarrow{\cong} H^{\Gamma}_{n}(\eub{\Gamma} \to \pt;\bfL^{\langle j \rangle}_{\IZ,w}\langle 1 \rangle).
 \label{H(Psi)_cong_H_n_upper_Gamma(underline(E)G_to_pt)_d_plus_1}
\end{eqnarray}

If we apply the long exact
sequence~\eqref{long_exact_homology_sequence_for_E_to_E_langle_1_rangle_to_overline(E)} to 
$\bfE = \bfL^{\langle j \rangle}_{\IZ,w}$ and $\underline{E} \Gamma \to \pt$ and use  that
$H_{d+1}^{\Gamma}(\eub{\Gamma} \to \pt;\bfL^{\langle j \rangle}_{\IZ,w})$ vanishes
by assertion~\eqref{the:Uniqueness_of_manifold_models:periodic_structure_group_(d_plus_1)_vanishes},
we get  an epimorphism
\begin{equation}
\partial_{d+2} \colon H_{d+2}^{\Gamma}(\eub{\Gamma} \to \pt;\overline{\bfL}^{\langle j \rangle}_{\IZ,w})
\xrightarrow{\cong}
H_{d+1}^{\Gamma}(\eub{\Gamma} \to \pt;\bfL^{\langle j \rangle}_{\IZ,w}\langle 1 \rangle).
\label{underline(E)G_to_pt;overline(L)_to_L_langle_1_rangle_plus_1}
\end{equation}

Combining~\eqref{vanishing_of_H_(d_plus_2)(eub(Gamma)_to_pt;overline(L)},~%
\eqref{H(Psi)_cong_H_n_upper_Gamma(underline(E)G_to_pt)_d_plus_1}
and~\eqref{underline(E)G_to_pt;overline(L)_to_L_langle_1_rangle_plus_1} yields
\begin{equation}
  H^{\Gamma}_{d+1}(\Psi;\bfL^{\langle j \rangle}_{\IZ,w}\langle 1 \rangle)
  = \{0\}.
  \label{from_H(Psi)_to_BH(underline(E)G_vanishing}
\end{equation}
Note that we get an identification
$H^{\Gamma}_{d+1}(\Psi;\bfL^{\langle j \rangle}_{\IZ,w}\langle 1 \rangle)
= \cals_{d+1}^{\langle j \rangle}(N'/\Gamma,\partial N'/\Gamma)$
from the definition of the algebraic structure groups.
Hence $\cals_{d+1}^{\langle j \rangle}(N'/\Gamma,\partial N'/\Gamma)$
vanishes because of~\eqref{from_H(Psi)_to_BH(underline(E)G_vanishing}.
\\[1mm]~\eqref{the:Uniqueness_of_manifold_models:homeo_and_h-cob}
The main arguments for the proof of assertion~\eqref{the:Uniqueness_of_manifold_models:homeo_and_h-cob}
have already  been presented  in~\cite[Proof of Lemma~4.3]{Connolly-Davis-Khan(2015)}. For the
reader's convenience we give more details here.

We conclude
from~\cite[Theorem~8.9 and Theorem~10.2]{Lueck(2022_Poincare_models)}
taking~\cite[Lemma~3.4]{Lueck(2022_Poincare_models)}  into account
that there is  a $\Gamma$-homotopy equivalence of $\Gamma$-pairs
$(f,\partial f) \colon (N,\partial N) \xrightarrow{\simeq_{\Gamma}} (N',\partial N')$
such that  $\partial f$ induces $F$-homotopy equivalences
$\partial f_F \colon S'_F \to S'_F$  for every $F \in \calm$.
We get $\cals_{d+1}^{\langle h\rangle}(N'/\Gamma,\partial N'/\Gamma) = 0$ from
assertion~\eqref{the:Uniqueness_of_manifold_models:structure_group_(d_plus_1)_vanishes}.
We conclude from Theorem~\ref{the:surgery_classification}~\eqref{the:surgery_classification:decoration_h}
that there is an $h$-cobordism $(W,\partial W)$ with a $\Gamma$-homotopy
equivalence of pairs
$(F,\partial F) \colon (W,\partial W) \to (N' \times [0,1], \partial(N' \times [0,1]))$
from $(f,\partial f) \colon (N,\partial N) \to (N',\partial N')$ to
$\id_{(N',\partial N')} \colon(N',\partial N')\to (N',\partial N')$.
In dimension $d = 5$ we use that fact that on the boundary,
which is $4$-dimensional, all $\Gamma$-components are induced from  finite groups $F$
which are all  good in the sense of Freedman. In particular we see
that~\eqref{the:Uniqueness_of_manifold_models:homeo_and_h-cob:slices_Gamma-h-cobordant} holds.
We conclude from Theorem~\ref{the:computing_K-groups}, the $s$-Cobordism Theorem for pairs,
and basic properties of Whitehead torsion as for instance homotopy invariance and the sum formula
that we can choose for $F\in  \calm$ an  $F$-$h$-cobordism $V_F$ between $S_F$ and $S'_F$
such that $W = \bigcup_{F \in \calm} \Gamma \times_F V_F$ is a $\Gamma$-h-cobordism
between $\partial N$ and $\partial N'$  and a simple $\Gamma$-$h$-cobordism of pairs between
$N \cup_{\partial N} W$ and $N'$ relative $\partial N'$. The latter implies
that there is a $\Gamma$-homeomorphism of pairs $(N \cup_{\partial N} W,\partial N') \xrightarrow{\cong} (N',\partial N')$
which is the identity on $\partial N'$. It induces a $\Gamma$-homeomorphism
$N \cup_{\partial N} W \cup _{\partial N' }C(\partial N') \xrightarrow{\cong} M' = N'\cup _{\partial N'} C(\partial N')$.
Hence it suffices to show that
$N \cup_{\partial N} W \cup_{\partial N'} C(\partial N')$ and
$M = N \cup_{\partial N} C(\partial N)$ are $\Gamma$-homeomorphic.

Let $W^-$ be a $\Gamma$-$h$-cobordism between $\partial N'$ and $\partial N$ satisfying
$\tau(\partial N' \to W^-) = - \tau(\partial N \to W)$. Then $W \cup_{\partial N'} W^-$ is
a trivial $\Gamma$-$h$-cobordism over $\partial N$ because of
\begin{multline*}
\tau(\partial N \to W \cup_{\partial N'} W^-) = \tau(\partial N \to W) + \tau(\partial N' \to W^-)
\\
= \tau(\partial N \to W) - \tau(\partial N \to W) =0.
\end{multline*}
Analogously one shows that $W^- \cup_{\partial N} W$ is a trivial $F$-h-cobordism over
$\partial N'$.  Now one constructs by an Eilenberg swindle a $\Gamma$-homeomorphism
relative to  $\partial N$
\[
W \cup_{\partial N'} \partial N' \times [0,\infty) \xrightarrow{\cong} \partial N\times  [0,\infty).
\]
 It extends by passing to the one-point-compatification of the various path components
  to a $\Gamma$-homeomorphism relative $\partial N$.
  \[
      W \cup_{\partial N'} C(\partial N') \xrightarrow{\cong} C(\partial N).
  \]
 It together with $\id_N$ yields the desired $\Gamma$-homeomorphism
 $N \cup_{\partial} W \cup_{\partial N'} C(\partial N') \xrightarrow{\cong} N
 \cup_{\partial N} C(\partial N)$. 
\\[1mm]~\eqref{the:Uniqueness_of_manifold_models:uniqueness_of_slice_complements}
Since $\Gamma$ acts freely on  $(N,\partial N)$ and $(N',\partial N')$, any $\Gamma$-homeomorphism
$(N,\partial N) \xrightarrow{\cong} (N',\partial N')$ of pairs is a simple $\Gamma$-homotopy equivalence
of $\Gamma$-$CW$-pairs by the topological invariance of Whitehead torsion,
see~\cite{Chapman(1973),Chapman(1974)}. 

Consider  a simple homotopy equivalence
of $\Gamma$-pairs $(f,\partial f) \colon (N,\partial N) \xrightarrow{\cong} (N',\partial N')$,
We get $\cals_{d+1}^{\langle s\rangle}(N'/\Gamma,\partial N'/\Gamma) = 0$ from
assertion\eqref{the:Uniqueness_of_manifold_models:periodic_structure_group_(d_plus_1)_vanishes}.
Now we conclude from Theorem~\ref{the:surgery_classification}~\eqref{the:surgery_classification:uniqueness}that $(f,\partial f)$
is $\Gamma$-homotopic to a $\Gamma$-homeomorphism of pairs.
\\[1mm]~\eqref{the:Uniqueness_of_manifold_models:uniqueness_of_slice_complements_special}
We conclude~\eqref{the:Uniqueness_of_manifold_models:uniqueness_of_slice_complements_special:homeo_pairs}~%
$\Longleftrightarrow$~\eqref{the:Uniqueness_of_manifold_models:uniqueness_of_slice_complements_special_simple_pairs}
from assertion~\eqref{the:Uniqueness_of_manifold_models:uniqueness_of_slice_complements}.
Obviously~\eqref{the:Uniqueness_of_manifold_models:uniqueness_of_slice_complements_special_simple_pairs}~%
$\implies$~\eqref{the:Uniqueness_of_manifold_models:uniqueness_of_slice_complements_special:simple_S_F}
and~\eqref{the:Uniqueness_of_manifold_models:uniqueness_of_slice_complements_special:homeo_pairs}~%
$~\implies$\eqref{the:Uniqueness_of_manifold_models:uniqueness_of_slice_complements_special:homeo_S_F} hold.
We get~\eqref{the:Uniqueness_of_manifold_models:uniqueness_of_slice_complements_special:homeo_S_F}~%
$\implies$~\eqref{the:Uniqueness_of_manifold_models:uniqueness_of_slice_complements_special:simple_S_F}
from the topological invariance of Whitehead torsion,
see~\cite{Chapman(1973),Chapman(1974)}. Hence it remains to prove the 
implication~\eqref{the:Uniqueness_of_manifold_models:uniqueness_of_slice_complements_special:simple_S_F}~$\implies$~%
\eqref{the:Uniqueness_of_manifold_models:uniqueness_of_slice_complements_special_simple_pairs} what we do  next.

Any $F$-homotopy equivalence $S_F \xrightarrow{\sim_s} S'_F$ is simple
by~\cite[Lemma~3.3~(5)]{Lueck(2022_Poincare_models)}, since there exists one
$F$-homotopy equivalence $S_F \xrightarrow{\sim_s} S'_F$ by assumption.
Now apply~\cite[Theorem~9.1 and Theorem~10.3]{Lueck(2022_Poincare_models)}
taking~\cite[Lemma~3.4]{Lueck(2022_Poincare_models)}  into account. The condition (S)
appearing in~\cite[Definition~7.9]{Lueck(2022_Poincare_models)}  can be arranged to hold
by~\cite[Lemma~7.10]{Lueck(2022_Poincare_models)}. 
This finishes the proof of Theorem~\ref{the:uniqueness_of_manifold_models}.
\end{proof}

\begin{remark}\label{rem:slice_system}
  Note that from Theorem~\ref{the:uniqueness_of_manifold_models} we get a slice model
  system $\cals = \{S_F \mid F \in \calm\}$ such that each element $S_F$ is unique up to
  $F$-$h$-cobordism.  It is unclear how we can determine $\cals$ just from $\Gamma$,
  provided that all the assumptions appearing in
  Theorem~\ref{the:uniqueness_of_manifold_models} are satisfied.  Note that we have at
  least a recipe to determine $\{\kappa_F\ \mid F \in \calm\}$ from $\Gamma$ and hence the
  $F$-homotopy type of each $S_F$, see Definition~\ref{definition:(S)}.

  If $F$ is finite cyclic of odd order, the simple structure set of $S_F/F$ has completely
  been determined in terms of Reidemeister torsion and $\rho$-invariants by
  Wall~\cite[Theorem~14E.7 on page~224]{Wall(1999)}.
  
  But there is a geometric case, where the passage from $\Gamma$ to the slice manifold
  system is explicit.  Let $X$ be a closed negatively curved manifold and suppose that
  $\Gamma$ is a discrete cocompact subgroup of $\Isom(\widetilde X)$, that $\Gamma$ is a
  finite extension of the deck transformations $\pi$, and that $\Gamma$ satisfies
  condition (F).  Then the action of $\Gamma$ extends to the sphere $S^{d-1}_\infty$ and
  any non-trivial finite subgroup $F$ fixes a point in $\widetilde X$ and acts freely on
  the boundary sphere.  The $F$-h-cobordism between the boundary of a $F$-tubular
  neighborhood of the fixed point and the sphere at infinity determines the slice manifold
  structure.
\end{remark}



  \typeout{-------------------------- Section: Comparisons --------------------------}

  \section{Comparisons}\label{sec:Comparisons}
  
\subsection{Cocompact and d-dimensional models}%
\label{subsec:Cocompact_and_d-dimensional_models}

We discuss the relationship between cocompact manifold models and $d$-dimensional manifold
models for $\eub{\Gamma}$.

Assume that $\Gamma$ is a finite extension of the fundamental group of a closed aspherical
manifold $X$ of dimension $d$.  Note that $H_d(X;\Z/2) = \Z/2$ and $H_e(X;\Z/2)=0$ for
$e>d$. We claim that a cocompact manifold model for $\eub{\Gamma}$ is a $d$-dimensional
manifold model for $\eub{\Gamma}$ and conversely.  If $M$ is a cocompact manifold model for
$\eub{\Gamma}$, then $M/\pi$ is a closed manifold which has the homotopy type of $X$;
hence its homology shows that it has dimension $d$.  Conversely, if $M$ is a
$d$-dimensional manifold model for $\eub{\Gamma}$, then $M/\pi$ is a $d$-manifold having
the homology of $X$ so must be closed.  Hence $M/\Gamma$ is compact.

\subsection{(M),(NM), and (F)}\label{subsec:(M),(NM),and_(F)}
  
Clearly conditions (M) and (NM) imply condition (F), but the converse is not true in
general.  If $\Gamma$ is virtually torsionfree and a cocompact manifold model for
$\eub{\Gamma}$ exists, then conditions (M)+(NM) are equivalent to condition (F)
by Lemma~\ref{lem:consequences_of_the_conditions_(M),(NM)_(F)}~\eqref{lem:consequences_of_pseudo-free}. This
provides an intriguing possibility for a negative answer to the Manifold Model Question:
Construct a finite extension $\Gamma$ of a fundamental group of a closed aspherical
manifold which satisfies condition (F), but not (M)+(NM).

\subsection{Assumptions}\label{subsec:Assumptions}

The assumptions of our uniqueness Theorem~\ref{the:uniqueness_intro} and the assumptions
of the uniqueness theorem in~\cite{Connolly-Davis-Khan(2015)} are different and it is
important to compare them so that we can use the results of both.  The main difference is
our assumption of a slice manifold model and their assumption of condition (C\tp i), which
says that there exists a proper cocompact $\Gamma$-manifold $X$ such that
$(X\setminus X^{>1})/\Gamma$ has the $\Gamma$-homotopy type of a finite
$\Gamma$-$CW$-complex and $X$ is $\Gamma$-homotopy equivalent to $\eub{\Gamma}$.

  The following further conditions appear in~\cite{Connolly-Davis-Khan(2015)}.
  The condition (C\tp ii) says that each infinite dihedral
  subgroup of $\Gamma$ lies in a unique maximal infinite dihedral group, Condition (C\tp iii)
  says that $\Gamma$ satisfies the $K$-and $L$-theoretic Farrell-Jones Conjecture with
  coefficients in the ring $\IZ$. Note that condition (C\tp iii) is automatically satisfied,
  if $\pi$ is a Farrell-Jones group.
  The condition (C) says that there is a contractible
  Riemannian manifold of  non-positive sectional curvature with effective cocompact
  proper $\Gamma$-action by isometries. In~\cite[Remark~1.2]{Connolly-Davis-Khan(2015)} it
  is proved that the conditions (F) and  (C) together imply conditions (C\tp i), (C\tp \tp ii),
  and (C\tp iii), actually such $\Gamma$ is a Farrell-Jones group.

  It is obvious that the existence of a  slice manifold model for $\eub{\Gamma}$  implies condition (C\tp i). 
  In~\cite[Lemma~4.2]{Connolly-Davis-Khan(2015)} it is shown
  that there exists a slice manifold model for $\eub{\Gamma}$, provided that
  conditions  (F), (C\tp i)  and (C\tp ii) and (C\tp iii) are satisfied. The conditions
  (F) and (C\tp i) follow from conditions (M) and (NM)
  by Lemma~\ref{lem:consequences_of_the_conditions_(M),(NM)_(F)}.
  Hence   condition (C\tp i) is equivalent to the existence of slice manifold model for $\eub{\Gamma}$,
  provided that conditions (M) and (NM) hold and $\pi$ is a Farrell-Jones group.
  Recall that Theorem~\ref{the:existence_intro} gives conditions for the
  the existence of a slice manifold model for $\eub{\Gamma}$ and that most of the conditions appearing
  there are necessary, see Remark~\ref{rem:some_conditions_are_necessary}.
  Obviously the conditions appearing
  in Theorem~\ref{the:existence_intro} are more accessible  than the condition (C'i).

  \subsection{All models are slice manifold models}%
\label{sec:All_models_are_slice_manifold_models}

  Now suppose that there  exists a slice manifold model $X$ for $\eub{\Gamma}$. Consider any
  cocompact proper $\Gamma$-manifold $M$ such that $M$ is $\Gamma$-homotopy equivalent to
  $\eub{\Gamma}$. Recall that $X$ is a $\Gamma$-$CW$-model for $\eub{\Gamma}$, which is
  pseudo-free, i.e., $X^{>1}$ is zero-dimensional.  Hence also $M$ is pseudo-free by
  Lemma~\ref{lem:consequences_of_the_conditions_(M),(NM)_(F)}~\eqref{lem:consequences_of_pseudo-free}.   We
  conclude from~\cite[Lemma~3.2]{Connolly-Davis-Khan(2015)} that there is an isovariant
  $\Gamma$-homotopy equivalence $f \colon M \to
  X$. Now~\cite[Proposition~4.2]{Connolly-Davis-Khan(2015)} implies that $M$ is a slice manifold
  model for $\eub{\Gamma}$.

\subsection{NRQ and MMQ}\label{subsec:NRQ_and_MMQ}
In the introduction we argued, that in fairly general circumstances, an affirmative answer
to the Manifold Model Question gives an affirmative answer to the Nielsen Realization
Question.  In this subsection we argue, that under quite technical conditions, an
affirmative answer to the Nielsen Realization Question implies an affirmative answer to
the Manifold Model Question.

\begin{proposition}
  Suppose a finite group $G$ acts on a closed aspherical manifold $X$ with fundamental
  group $\pi$.  Let
 \[
 1 \to \pi \to \Gamma \to G \to 1
 \]
 be the associated exact sequence of groups.  If $\Gamma$ satisfies condition (F), namely
 that every non-trivial finite subgroup of $\Gamma$ has finite normalizer, then there is a
 pseudo-free cocompact $\Gamma$-manifold $M$ which has the $\Gamma$-homotopy type of a
 model for $\eub{\Gamma}$.
\end{proposition}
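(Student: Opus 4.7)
The plan is to take $M = \widetilde X$ itself, equipped with the $\Gamma$-action by lifts described at the start of the introduction. First I would record the basic features of this manifold: since $X$ is closed aspherical, $M$ is a contractible topological manifold; the subgroup $\pi$ acts freely and properly as the group of deck transformations with quotient $X$, and since $\pi$ has finite index in $\Gamma$ the $\Gamma$-action is proper; cocompactness is clear because $M/\Gamma = X/G$; and by construction $\Gamma$ embeds into the homeomorphism group of $M$, so the action is effective.

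The next step is to invoke Lemma~\ref{lem:consequences_of_the_conditions_(M),(NM)_(F)}\eqref{lem:consequences_of_pseudo-free}. Its hypotheses (effective, proper, cocompact action on a contractible manifold) are all in place, and by assumption $\Gamma$ satisfies~(F). The lemma therefore gives that the action is pseudo-free, and the proof recorded there (via Proposition~2.3 of~\cite{Connolly-Davis-Khan(2015)}) tells us moreover that the fixed-point set $M^H$ is a single point for every non-trivial finite subgroup $H \subseteq \Gamma$. Hence for an arbitrary subgroup $H \subseteq \Gamma$ we have: $M^{\{1\}} = M$ is contractible; $M^H$ is a point whenever $H$ is finite and non-trivial; and $M^H = \emptyset$ whenever $H$ is infinite, by properness.

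It remains to deduce that $M$ has the $\Gamma$-homotopy type of a model for $\eub{\Gamma}$. The plan is to apply standard equivariant CW-approximation: since $M$ is a cocompact proper topological $\Gamma$-manifold, it is $\Gamma$-homotopy equivalent to a $\Gamma$-CW-complex. Then, using the universal property of $\eub{\Gamma}$ (or directly an equivariant Whitehead argument), any $\Gamma$-map $\eub{\Gamma} \to M$ induces a weak homotopy equivalence on every $H$-fixed-point set—because both sides are either contractible or empty in the matching pattern identified above—and is therefore a $\Gamma$-homotopy equivalence. The one step that is not purely formal is the equivariant CW-approximation in the topological category; this is the main obstacle, and I would handle it by citing the relevant triangulation / $\Gamma$-CW-approximation results for cocompact proper topological actions (compatible with the Kirby–Siebenmann / Illman-style techniques already used elsewhere in the paper), rather than reproving them.
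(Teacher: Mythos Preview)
Your approach is essentially the same as the paper's: take $M=\widetilde X$, use Proposition~2.3 of~\cite{Connolly-Davis-Khan(2015)} (via the paper's Lemma~\ref{lem:consequences_of_the_conditions_(M),(NM)_(F)}\eqref{lem:consequences_of_pseudo-free}) to get pseudo-freeness and the fixed-point pattern, then conclude by equivariant $CW$-approximation. The only substantive divergence is in how you justify that last step. You propose Kirby--Siebenmann / Illman-style techniques, but Illman's equivariant triangulation theorems are for smooth actions, which is not the setting here, and Kirby--Siebenmann alone does not give an \emph{equivariant} $CW$-structure for a merely topological action. The paper instead cites Proposition~2.5 of~\cite{Connolly-Davis-Khan(2015)}, which is tailored to exactly this situation (a proper cocompact pseudo-free topological action on a contractible manifold) and delivers the $\Gamma$-homotopy type of a $\Gamma$-$CW$-complex directly. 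So your outline is correct, but you should replace the vague appeal to general triangulation results with that specific citation; otherwise the step you flagged as ``the main obstacle'' is not actually discharged.
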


\begin{proof}
  Let $M = \widetilde X$ which is a manifold by with a $\Gamma$-action by hypothesis.  It is cocompact
  since $M/\Gamma$ is a quotient of the compact space $X$.  Proposition 2.3
  of~\cite{Connolly-Davis-Khan(2015)} asserts that the action of $\Gamma$ on $M$
  is pseudo-free,  that the fixed-point set of any finite non-trivial subgroup is a
  point, and the fixed-point set of any infinite subgroup is empty.  Finally, Proposition
  2.5 of~\cite{Connolly-Davis-Khan(2015)} asserts that $M$ has the $\Gamma$-homotopy type
  of a $\Gamma$-CW-complex.
\end{proof}

The conclusion stops short of answering the Manifold Model Question, since it does not
assert that $M$ is a  $\Gamma$-CW-complex.  However, we suspect that using some
algebraic $K$-theory together with the proof of Proposition 2.5
of~\cite{Connolly-Davis-Khan(2015)}, that one could prove that $M$ is a compact manifold
model for $\eub{\Gamma}$ if $G$ has order 2 or 3, in which case the lower Whitehead groups $\Wh_i(G)$ vanish for $i \leq 1$.

\subsection{Hyperbolic manifolds satisfies NRQ and MMQ}\label{subsec:hyperbolic_NRQ_and_MMQ}

We point out that in a special case, geometry gives answers to the Nielsen Realization
Question and the Manifold Model Question.  A \emph{hyperbolic manifold} is a manifold with
constant sectional curvature equal to -1.  The only complete simply connected hyperbolic
$n$-manifold is hyperbolic $n$-space $\mathbb H^n$.

\begin{theorem}
  Let $X$ be a closed hyperbolic $n$-manifold with fundamental group $\pi$.  Any group
  monomorphism $\phi: G \to \Out(\pi)$ can be realized by a map $G \to \Isom(X)$.
\end{theorem}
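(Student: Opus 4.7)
The plan is to split the argument by dimension, with Mostow-Prasad rigidity doing the heavy lifting for $n\ge 3$, Kerckhoff's theorem for $n=2$, and a trivial check for $n=1$.

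For $n\ge 3$ the core observation is that the evaluation homomorphism $\ev\colon \Isom(X)\to\Out(\pi)$, which sends an isometry $f$ to the conjugacy class of $f_{\#}\in\Aut(\pi)$, is an isomorphism. I would first establish injectivity: since $X$ is aspherical, an isometry inducing an inner automorphism is homotopic to the identity, and by Mostow rigidity any self-isometry of a closed hyperbolic $n$-manifold with $n\ge 3$ that is homotopic to the identity must equal the identity. For surjectivity, given $[\alpha]\in\Out(\pi)$, the asphericity of $X$ produces a self-homotopy equivalence $f_\alpha\colon X\to X$ inducing $\alpha$ up to inner automorphism; Mostow rigidity promotes $f_\alpha$ to a unique isometry $\iota(\alpha)$ in its homotopy class. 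Uniqueness makes $\iota$ a well-defined set map and forces $\iota(\alpha\beta)=\iota(\alpha)\iota(\beta)$, so $\iota$ is a group homomorphism inverse to $\ev$. The desired realization of $\phi\colon G\hookrightarrow\Out(\pi)$ is then simply $\iota\circ\phi\colon G\to\Isom(X)$.

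For $n=2$, the group $\Out(\pi)$ is (up to an index-two subgroup coming from orientation) the mapping class group of $X$. Since any monomorphism from the finite group $G$ lands in a finite subgroup of the mapping class group, Kerckhoff's solution to the Nielsen realization problem~\cite{Kerckhoff(1983)} produces a hyperbolic structure on the underlying surface with respect to which $G$ acts by isometries realizing $\phi$; this is how the statement should be interpreted in dimension two, since without varying the hyperbolic structure the naive version can fail. For $n=1$ we have $X=S^1$, $\pi\cong\IZ$, $\Out(\pi)\cong\IZ/2$, and any subgroup is realized by reflections.

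The main obstacle is really conceptual rather than technical: ensuring that the set-theoretic inverse $\iota$ is a genuine \emph{group} homomorphism depends crucially on the \emph{uniqueness} clause in Mostow rigidity (not merely existence). Once this is in hand, the rest is formal. A secondary subtlety is that $\ev\colon\Isom(X)\to\Out(\pi)$ a priori lands in automorphisms only up to inner automorphism, so care is needed to check that conjugation by elements of $\pi$ (deck transformations on $\widetilde X=\IH^n$) is absorbed correctly; this is where the assumption that $X$ is a $K(\pi,1)$ is used, together with the fact that conjugation by $\gamma\in\pi$ on $\pi$ is induced geometrically by the deck transformation $\gamma$, which is trivial in $\Isom(X)=\Isom(\widetilde X)/\pi$.
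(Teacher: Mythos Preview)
Your proof is correct and takes a genuinely different route from the paper's. For $n\ge 3$ you go straight to the classical corollary of Mostow rigidity that $\ev\colon \Isom(X)\to\Out(\pi)$ is an isomorphism, and then simply precompose its inverse with $\phi$. The paper instead first realizes $\phi$ by an abstract extension $1\to\pi\to\Gamma\to G\to 1$ (using that the center of $\pi$ is trivial), then invokes the Milnor--Schwarz lemma to see that $\Gamma$ is quasi-isometric to $\IH^n$, appeals to a theorem of Tukia to produce a geometric $\Gamma$-action on $\IH^n$, and finally uses Mostow rigidity to conjugate the restricted $\pi$-action to the deck-transformation action, so that the quotient yields the desired $G$-action on $X$. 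Your argument is more elementary and self-contained: it avoids Tukia's theorem and the Milnor--Schwarz machinery entirely, and it makes transparent that the whole finite group $\Out(\pi)$ (not just the image of $\phi$) already acts isometrically. The paper's route, by contrast, is tailored to its broader theme: it explicitly exhibits the $\Gamma$-action on $\widetilde X=\IH^n$, which is exactly what one wants when asking whether $\IH^n$ is a manifold model for $\eub{\Gamma}$, and the quasi-isometric rigidity step illustrates a method that generalizes beyond constant curvature. Your observation about the $n=2$ case---that one must allow the hyperbolic structure to vary, since the literal statement can fail for a fixed metric---is a valid clarification that the paper leaves implicit in its citation of Kerckhoff.
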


\begin{proof}
  When $n = 2$, this is a consequence of Kerckhoff's solution of the Nielsen Realization
  Problem~\cite{Kerckhoff(1983)}, so we assume $n > 2$.

  Since $\pi$ contains no rank 2 free abelian subgroup, the center of $\pi$ is trivial.
  Thus, as mentioned in the introduction, group cohomology shows that $\phi$ can be
  realized by a group extension
\[
1 \to \pi \to \Gamma \to G \to 1,
\]
unique up to isomorphism.

We now need to use the Milnor-Schwarz Theorem, the work of Tukia, and the Mostow Rigidity
Theorem.  We will follow the terminology of the book~\cite{Drutu+Kapovich(2018)}.  Recall
that there is an equivalence relation on metric spaces called quasi-isometry,
see~\cite[Definition~8.10]{Drutu+Kapovich(2018)}.  A finite generating set $S$ of a group $G$
defines a metric on the Cayley graph of $(G,S)$, hence on $G$, its set of vertices.  The
isometric inclusion of $G$ into the Cayley graph is a quasi-isometry.  Different finite
generating sets for a group $G$ give quasi-isometric metrics on $G$
(see~\cite[Exercise~7.82]{Drutu+Kapovich(2018)}).

An action of a group $G$ on a metric space $X$ is \emph{geometric} if it is properly
discontinuous, isometric, and cobounded. The Milnor-Schwarz Theorem~\cite[Theorem~8.37]{Drutu+Kapovich(2018)}
states that if a group $G$ acts geometrically on a proper
geodesic metric space $X$, then $G$ is finitely generated and for any $x \in X$, the orbit
map $G \to X,\;  g \mapsto gx$ is a quasi-isometry.

Let $X$ be a closed hyperbolic manifold with fundamental group $\pi$.  Applying the
Milnor-Schwarz Theorem to the inclusion of an orbit gives that $\pi$ is quasi-isometric to
$\widetilde X = \mathbb H^n$.  Since $\pi \subset \Gamma$ is finite index, $\Gamma$ has a
finite generating set $S$. An application of the Milnor-Schwarz Theorem to the
$\pi$-action on the Cayley graph of $(\Gamma, S)$ gives that $\pi$ and $\Gamma$ are
quasi-isometric (see~\cite[Corollary 8.47 (1)]{Drutu+Kapovich(2018)}).

Thus $\Gamma$ is quasi-isometric to $\mathbb H^n$.  A theorem of Tukia (see~\cite[Theorem
23.1]{Drutu+Kapovich(2018)}) asserts that $\Gamma$ acts geometrically on $\mathbb H^n$.
We now have two embeddings of $\pi$ as a discrete cocompact subset of
$\Isom(\mathbb H^n)$, one as the restriction of the action of $\Gamma$ to $\pi$ and the
other as deck transformations associated to the cover $\widetilde X \to X$.  The Mostow
Rigidity Theorem (see~\cite[Theorem 24.15]{Drutu+Kapovich(2018)}) implies that these two
embeddings are conjugate by an element $\alpha$ of $\Isom(\mathbb H^n)$ since $n > 2$.
Thus conjugating the geometric action of $\Gamma$ by $\alpha$ gives an isometric extension
of the $\pi$-action on $\widetilde X$ by deck transformations.  Passing to the $\pi$-orbit
space gives the isometric $G$-action on $X$.
\end{proof}

\begin{corollary}
  If $X$ is a closed hyperbolic manifold with fundamental group $\pi$,
  than the answers to both the Nielsen Realization Question for any
  group monomorphism $\phi : G \to \Out(\pi)$ and the Manifold Model
  Question for a finite normal extension $\pi \subset \Gamma$ are yes.
\end{corollary}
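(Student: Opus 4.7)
The NRQ half of the corollary is a direct restatement of the preceding theorem, applied to the monomorphism $\phi \colon G \to \Out(\pi)$. So the work is entirely in the MMQ half, and the plan is to reduce to the monomorphism case by quotienting out the ineffective kernel, then to use the lifted action on $\IH^n$ as a manifold model for $\eub{\Gamma}$.

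First, given the extension $1 \to \pi \to \Gamma \to G \to 1$, I would consider the homomorphism $\phi \colon G \to \Out(\pi)$ induced by conjugation of $\Gamma$ on $\pi$ and set $K := C_\Gamma(\pi)$. Since $\pi$ is the fundamental group of a closed hyperbolic $n$-manifold with $n \ge 2$, it is a non-elementary word-hyperbolic group with trivial center, so $K \cap \pi = Z(\pi) = 1$; hence $K$ injects into $G$ and is finite. Moreover $K$ is normal in $\Gamma$, since conjugation preserves $\pi$. Forming $\bar{\Gamma} := \Gamma/K$ then yields an extension $1 \to \pi \to \bar{\Gamma} \to \bar{G} \to 1$ with $\bar{G} = G/K$, for which the induced map $\bar{\phi} \colon \bar{G} \to \Out(\pi)$ is, by construction, a monomorphism.

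Next, the plan is to apply the preceding theorem to $\bar{\phi}$ to get an isometric action $\bar{G} \to \Isom(X)$. As in that proof, the action of $\bar{\Gamma}$ on its Cayley graph and the Milnor--\v{S}varc/Tukia/Mostow chain of arguments already produce a geometric (i.e.\ properly discontinuous, isometric, cocompact) action of $\bar{\Gamma}$ on $\widetilde{X} = \IH^n$ in which the subgroup $\pi \subseteq \bar{\Gamma}$ acts by deck transformations. I would then pull this action back along $\Gamma \twoheadrightarrow \bar{\Gamma}$ to obtain a $\Gamma$-action on $\IH^n$, and verify: (a) the action is proper and cocompact, because each point stabilizer in $\Gamma$ is an extension of the finite group $\bar{\Gamma}_x$ by the finite group $K$, and $\IH^n/\Gamma$ surjects onto the compact orbifold $\IH^n/\bar{\Gamma}$; (b) for every finite subgroup $H \subseteq \Gamma$ with image $\bar{H} \subseteq \bar{\Gamma}$, the fixed set $(\IH^n)^H = (\IH^n)^{\bar{H}}$ is the totally geodesic subspace fixed by the finite group of isometries $\bar{H}$, hence a lower-dimensional hyperbolic space, hence contractible. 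These two points together say that $\IH^n$ with this $\Gamma$-action is a cocompact proper $n$-dimensional manifold model for $\eub{\Gamma}$, answering the MMQ affirmatively.

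The only nontrivial input beyond the preceding theorem is the identification of $K = C_\Gamma(\pi)$ as a finite normal subgroup, which rests on the triviality of the center of $\pi$. I do not expect a genuine obstacle here: once the previous theorem delivers the geometric action of $\bar{\Gamma}$ on $\IH^n$, the remaining checks are routine properties of finite-order isometries of hyperbolic space and of pullbacks of proper cocompact actions along quotients by a finite normal subgroup.
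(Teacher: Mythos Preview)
Your proposal is correct and follows essentially the same route as the paper: obtain a geometric isometric $\Gamma$-action on $\IH^n$ via the preceding theorem, then observe that fixed sets of finite groups of isometries of $\IH^n$ are totally geodesic (hence contractible), so $\IH^n$ is a cocompact manifold model for $\eub{\Gamma}$. The paper's proof is a two-line application of this idea; you are more careful in one respect: you explicitly reduce to the injective case by quotienting out $K = C_\Gamma(\pi)$ before invoking the preceding theorem, whereas the paper simply says ``so is the Manifold Model Question'' and implicitly relies on the fact that the Milnor--\v{S}varc/Tukia/Mostow chain in the previous proof already produces a proper cocompact isometric $\Gamma$-action on $\IH^n$ for \emph{any} finite extension of $\pi$ (injectivity of $\phi$ is not used there). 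Your pullback-along-$\Gamma \twoheadrightarrow \bar\Gamma$ argument achieves the same end and is arguably cleaner; one cosmetic point is that the fixed set $(\IH^n)^H$ need not be \emph{lower}-dimensional when $H \subseteq K$, but it is still a hyperbolic subspace and hence contractible.
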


\begin{proof}
  The Nielsen Realization Question is an immediate consequence of the
  previous theorem.  So is the Manifold Model Question, since the
  fixed set of a subgroup of $\Isom(\mathbb H^n)$ is the intersection
  of hyperbolic subspaces, hence a hyperbolic subspace, hence
  contractible.  Hence $\mathbb H^n$ is a cocompact manifold model for
  $\eub{\Gamma}$.
\end{proof}




\end{document}